\newcommand{\Z}{\mathbb{Z}}
\newcommand{\Q}{\mathbb{Q}}
\newcommand{\F}{\mathbb{F}}
\newcommand{\B}{\mathcal{B}}
\newcommand{\M}{\mathcal{M}}
\newcommand{\OO}{\mathcal{O}}
\newcommand{\RR}{\mathfrak{R}}
\newcommand{\Kbar}{\overline{K}}
\newcommand{\Tbar}{\overline{T}}
\newcommand{\psibar}{\overline{\psi}}
\newcommand{\ahat}{\widehat{a}}
\newcommand{\hhat}{\widehat{h}}
\newcommand{\chat}{\widehat{c}}
\newcommand{\fhat}{\widehat{f}}
\newcommand{\pihat}{\widehat{\pi}}
\newcommand{\omegahat}{\widehat{\omega}}
\newcommand{\alphahat}{\widehat{\alpha}}
\newcommand{\gammahat}{\widehat{\gamma}}
\newcommand{\atilde}{\widetilde{a}}
\newcommand{\ctilde}{\widetilde{c}}
\newcommand{\ftilde}{\widetilde{f}}
\newcommand{\pitilde}{\widetilde{\pi}}
\newcommand{\dst}{\displaystyle}
\newcommand{\Gal}{\operatorname{Gal}}
\theoremstyle{plain}
\newtheorem{theorem}{Theorem}[section]
\newtheorem{proposition}[theorem]{Proposition}
\newtheorem{corollary}[theorem]{Corollary}
\newtheorem{lemma}[theorem]{Lemma}
\theoremstyle{definition}
\newtheorem{definition}[theorem]{Definition}
\newtheorem{remark}[theorem]{Remark}
\title{Extensions of local fields given by $3$-term Eisenstein
polynomials}
\author[Fejzullahu]{Endrit Fejzullahu}
\address{Department of Mathematics \\
University of Florida \\
Gainesville, FL 32611 \\
USA}
\email{f.endrit@gmail.com}
\author[Keating]{Kevin Keating}
\address{Department of Mathematics \\
University of Florida \\
Gainesville, FL 32611 \\
USA}
\email{keating@ufl.edu}
\date{\today}
\begin{document}

\begin{abstract}
Let $K$ be a local field with residue characteristic
$p$ and let $L/K$ be a totally ramified extension of
degree $p^k$.  In this paper we show that if $L/K$ has
only two distinct indices of inseparability then there
exists a uniformizer $\pi_L$ for $L$ whose minimum
polynomial over $K$ has at most three terms.  This leads
to an explicit classification of extensions with two
indices of inseparability.  Our classification extends
work of Amano, who considered the case $k=1$.
\end{abstract}

\maketitle

\section{Introduction} \label{introduction}

Let $K$ be a local field with normalized discrete
valuation $v_K: K\to \mathbb{Z}\cup\{\infty\}$.  
Let $L/K$ be a totally ramified extension and let
$\pi_L$ be a uniformizer of $L$. Then the minimum
polynomial for $\pi_L$ over $K$ is Eisenstein, and
$L=K(\pi_L)$.
It
may be desirable to find a uniformizer for $L$ whose
minimum polynomial over $K$ has as few nonzero terms as
possible in order to give a succinct description of the
extension.  In \cite{amano},
Amano studied totally ramified extensions $L/K$ of
degree $p$, where $K$ is a finite extension of
$\mathbb{Q}_p$, and showed that there exists a
uniformizer $\pi_L$ for $L$ that is a root of an
Eisenstein polynomial over $K$ with at most three
nonzero terms. In this paper we study separable totally
ramified extensions $L/K$ of degree $p^k$ for which
there exists a uniformizer $\pi_L$ whose minimum
polynomial over $K$ has three nonzero terms.  In
particular, we show that if $L/K$ has exactly two
indices of inseparability then such a uniformizer always
exists.  We use this fact to give an explicit
classification of isomorphism classes of totally
ramified extensions $L/K$ of degree $p^k$ with two
indices of inseparability.

In Section~\ref{background} we define the indices of
inseparability.  In Section~\ref{quote} we discuss the
``stability theorem'' (Theorem~\ref{stability}) and the
``perturbation theorem'' (Theorem~\ref{perturbation}).
In Section~\ref{iteration} we state our main result,
Theorem~\ref{main}.  The proof of this theorem is found
in Sections~\ref{existence} and \ref{uniqueness}. In Section~\ref{splitting} we discuss the splitting fields of the polynomials obtained in Theorem~\ref{main}. 
In Section~\ref{examples} we provide some counterexamples that
rule out various possible generalizations of the results
of Section~\ref{iteration}.

\subsection*{Notation} Let $K$ be a field which is
complete with respect to a normalized discrete valuation
$v_K$.  Let
\begin{align*}
\OO_K&=\{\alpha\in K:v_K(\alpha)\geqslant 0\} \\
\M_K&=\{\alpha\in K:v_K(\alpha)\geqslant 1\}
\end{align*}
be the ring of integers and maximal ideal of $K$.  We
assume that the residue field $\Kbar=\OO_K/\M_K$ of $K$
is a perfect field of characteristic $p$.  Let $\RR$
denote the set of Teichm\"uller representatives of $K$.
Let $\pi_K$ be a fixed uniformizer for $K$; thus
$v_K(\pi_K)=1$ and $\M_K=\pi_K\OO_K$.  Let $\Omega$ be a
separable closure of $K$.  For a finite subextension
$L/K$ of $\Omega/K$ we get $v_L$, $\OO_L$, and $\M_L$ as
above, with $v_L$ normalized for $L$.  However, a
uniformizer $\pi_L$ for $L$ should not be viewed as
fixed.

Most of this work appeared in the first author's 2020
University of Florida Ph.D. thesis \cite{thesis}.

\section{Indices of inseparability}
\label{background}

Let $L/K$ be a finite separable totally ramified
extension.  The indices of inseparability of $L/K$ were
defined by Fried \cite{fried} in the case char$(K)=p$,
and by Heiermann \cite{heiermann} in general.  In this
section we use a theorem of Heiermann to give a
definition of the indices of inseparability in terms of
Eisenstein polynomials.  We then prove some basic facts
about these indices that we will need later.

Let $K$ be a local field with residue characteristic
$p$ and let $L/K$ be a totally ramified extension of
degree $n=up^k$, where $p\nmid u$.  Let
$\pi_L$ be a uniformizer for $L$ whose minimum
polynomial over $K$ is the Eisenstein polynomial
\begin{equation}
\label{pi_L}
f(X) =X^{n}  -c_1 X^{n-1} +\cdots + (-1)^{n-1}c_{n-1}X + (-1)^{n}c_{n}.
\end{equation}
We wish to define certain invariants of $L/K$ in terms
of the valuations of the coefficients of $f(X)$.  For
$0 \leqslant j \leqslant k$ set
\begin{equation} 
\label{i_jtilde}
\widetilde{\imath}_j=\min\{nv_K(c_h) -h: 1 \leqslant h \leqslant n, v_p (h) \leqslant j\}.
\end{equation}
Let $e_K=v_K(p)$ be the absolute ramification index of
$K$ and set
\begin{equation}
\label{i_j}
i_j =\min\{\widetilde{\imath}_{j'} +(j'-j)ne_K: \, j \leqslant j' \leqslant k \}.  
\end{equation} 
Then $\widetilde{\imath}_j$ may depend on our choice of
$\pi_L$, but $i_j$ is an invariant of the extension
$L/K$:
 
\begin{theorem}

\label{well-defined}
The value of $i_j$ does not depend on the choice of
uniformizer $\pi_L$ for $L$.
\end{theorem}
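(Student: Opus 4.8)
The plan is to deduce Theorem~\ref{well-defined} from the work of Heiermann \cite{heiermann}, who gives a definition of the indices of inseparability that makes no reference to a uniformizer.  Heiermann (extending Fried \cite{fried}, who treated the case $\operatorname{char}(K)=p$) attaches to a finite separable totally ramified extension $L/K$ a nonincreasing sequence $i_0\geqslant i_1\geqslant\cdots\geqslant i_k=0$, defined intrinsically --- for instance through the structure of $\OO_L$ as an $\OO_K$-algebra, without choosing a uniformizer --- and then proves that for \emph{any} uniformizer $\pi_L$ with minimum polynomial as in \eqref{pi_L}, these intrinsic invariants are computed by the recipe \eqref{i_jtilde}--\eqref{i_j}.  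Granting this, the theorem follows at once: the right-hand side of \eqref{i_j}, although assembled from the valuations of the coefficients $c_h$ of one particular $f(X)$, equals a quantity that depends only on $L/K$.

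Accordingly, the first step is to quote Heiermann's formula precisely and to set up an explicit dictionary with the notation of \eqref{pi_L}: his Eisenstein polynomial (written, say, $\sum_h a_h X^{n-h}$) corresponds to ours with $v_K(a_h)=v_K(c_h)$, the signs being irrelevant to the valuations that enter \eqref{i_jtilde}; the admissible index set $\{h:1\leqslant h\leqslant n,\ v_p(h)\leqslant j\}$ must be matched to his, with the convention $v_K(c_h)=+\infty$ when $c_h=0$ so that vanishing coefficients are simply omitted; and the correction terms $(j'-j)ne_K$ in \eqref{i_j} must be identified with the mixed-characteristic corrections arising from $v_K(p)=e_K$, which collapse when $\operatorname{char}(K)=p$ so that one recovers Fried's definition.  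I expect the only real friction to be this bookkeeping --- faithfully translating indexing conventions and keeping track of the tame factor $u$ with $p\nmid u$ --- since it is easy to slip in an off-by-one error; the substantive mathematical content, that the polynomial recipe computes the intrinsic invariant, is already supplied by Heiermann's theorem, so the efficient route is to invoke it rather than reprove it.

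It is worth recording two endpoint cases, both checkable by hand, as a sanity check.  First, $i_k=\widetilde{\imath}_k=0$ trivially: among the $h$ admissible for $j=k$ the choice $h=n$ always contributes $nv_K(c_n)-n=0$, while for an Eisenstein polynomial every admissible term satisfies $nv_K(c_h)-h\geqslant n-h\geqslant 0$.  Second, the case $j=0$ is governed by the different: since $L/K$ is monogenic one has $\mathfrak{d}_{L/K}=\bigl(f'(\pi_L)\bigr)$, so the different exponent $d(L/K)=v_L(f'(\pi_L))$ is intrinsic, and a valuation analysis of $f'(\pi_L)=\sum_h(-1)^h(n-h)c_h\pi_L^{\,n-h-1}$ --- using $v_L(c_h)=nv_K(c_h)$, $v_L(p)=ne_K$, and tracking the contributions $v_p(n-h)$ --- recovers the classical identity $d(L/K)=i_0+n-1$, exhibiting $i_0=d(L/K)-(n-1)$ as an invariant of $L/K$.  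A fully self-contained treatment would have to reconstruct the intermediate invariants $i_1,\dots,i_{k-1}$ in the same spirit (via higher differents, or via the $\OO_K$-module structure of $\OO_L$), which is precisely the content of \cite{heiermann}; invoking that reference is the right approach.
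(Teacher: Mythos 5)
Your proposal is correct and follows essentially the same route as the paper, which proves Theorem~\ref{well-defined} simply by citing Heiermann's Theorem~7.1 (equivalently, Proposition~2.4 and Remark~2.5 of \cite{towers}), i.e.\ by invoking the intrinsic characterization of the indices of inseparability rather than reproving it. Your additional sanity checks ($i_k=0$ and $i_0=d(L/K)-(n-1)$ via the different) are consistent with the paper's framework but are not needed beyond the citation.
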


\begin{proof} 
See Theorem~7.1 of \cite{heiermann}, or Proposition~2.4
and Remark~2.5 of \cite{towers}.
\end{proof}

\begin{definition}
\label{def-i_j}
For $0\leqslant j \leqslant k$ define the $j$th
\emph{index of inseparability} of $L/K$ to be the
nonnegative integer $i_j$ given by \eqref{i_j}.
\end{definition}

If $K$ has characteristic $p$ then $e_K=\infty$, and
hence $i_j=\widetilde{\imath}_j$.  In general we have
\begin{equation} \label{order}
0=i_k < i_{k-1} \leqslant \cdots \leqslant i_1
\leqslant i_0 <\infty.
\end{equation}
For $0 \leqslant j \leqslant k$ and $x\geqslant0$ set
$\widetilde{\phi}_{j}(x)=i_j +p^j x$. 
The generalized Hasse-Herbrand functions $\phi_j: [0, \infty) \to [0, \infty)$ are defined by 
\begin{equation}
\label{varphi_j}
 \phi_j (x) =\min\{ \widetilde{\phi}_{j_0}(x):  \, 0 \leqslant j_0 \leqslant j\}.
\end{equation} 
Let $\phi_{L/K}:[0,\infty)\to[0,\infty)$ denote the
classical Hasse-Herbrand function of $L/K$, as defined
in the appendix to \cite{deligne}.  In Corollary~6.11 of
\cite{heiermann} it is shown that
$\phi_{L/K}(x)=\frac1n\cdot\phi_k(x)$ for $x\geqslant0$.

Let $1\leqslant h \leqslant n$ and set $j=v_p(h)$.
Following \cite{perturb} we define a function $\rho_h:
\mathbb{N} \to \mathbb{N}$ by
\begin{equation} \label{rhoh}
\rho_h(\ell) =\left \lceil \frac{\phi_j(\ell) +h}{n}\right \rceil.
\end{equation}
Let $\pi_L$ be a uniformizer for $L$ whose minimum
polynomial is given by \eqref{pi_L} and let $\pitilde_L$
be a uniformizer for $L$ with minimum polynomial
\begin{equation} \label{pi-tilde}
\ftilde(X)=X^n -\ctilde_{1}X^{n-1} +\cdots
+(-1)^{n-1}\ctilde_{n-1}X +(-1)^{n}\ctilde_n.
\end{equation}

\begin{definition}
\label{l-equiv} 
For $\ell\geqslant1$ we say $f\sim_{\ell}\ftilde$
if $v_K(\ctilde_h -c_h) \geqslant \rho_h(\ell)$
for $1\leqslant h\leqslant n$. 
\end{definition} 

We now specialize the results of this section to the
cases that interests us, namely those where $n=p^k$ and
$L/K$ has two distinct indices of inseparability.  We
first observe that in these cases $L/K$ has a unique
ramification break.  (See the appendix to \cite{deligne}
for definitions and basic properties of ramification
breaks in finite separable extensions.)

\begin{proposition} \label{break}
Let $L/K$ be a totally ramified extension of degree
$p^k$ with two distinct indices of inseparability.  Then
\begin{enumerate}[(i)]
\item The indices of inseparability of $L/K$ satisfy
$i_0=i_1=\cdots=i_{k-1}$ and $i_k=0$. 
\item $L/K$ has a unique upper and lower ramification
break $\B=\displaystyle\frac{i_0}{p^k-1}$.
\end{enumerate}
\end{proposition}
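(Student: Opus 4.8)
The plan is to prove (i) directly from the definitions in Section~\ref{background}, and then to deduce (ii) by feeding the resulting values $i_j$ into Heiermann's identity $\phi_{L/K}(x)=\frac1n\phi_k(x)$. For (i), I would reason as follows. By \eqref{order} we always have $i_k=0$, so if $L/K$ has exactly two distinct indices of inseparability these two values are $0$ and some integer $I>0$ (in particular $k\geqslant1$), and by the monotonicity in \eqref{order} it suffices to show $i_{k-1}>0$: then $i_{k-1},\dots,i_0$ all lie in $\{0,I\}$ and are positive, hence all equal $I=i_0$, which is the assertion. To bound $i_{k-1}$, fix a uniformizer $\pi_L$ with minimum polynomial \eqref{pi_L}, so $n=p^k$. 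In \eqref{i_jtilde} the conditions $1\leqslant h\leqslant p^k$ and $v_p(h)\leqslant k-1$ exclude only $h=p^k$, so $\widetilde{\imath}_{k-1}$ is a minimum over $1\leqslant h\leqslant p^k-1$; for such $h$ the Eisenstein condition gives $v_K(c_h)\geqslant1$, hence $nv_K(c_h)-h\geqslant p^k-h\geqslant1$ and therefore $\widetilde{\imath}_{k-1}\geqslant1$. Since $\widetilde{\imath}_k=i_k=0$ and $ne_K\geqslant1$ (with $e_K=\infty$ when $\operatorname{char}(K)=p$), \eqref{i_j} gives $i_{k-1}=\min\{\widetilde{\imath}_{k-1},\,ne_K\}\geqslant1$, as required.

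For (ii), the point is that part (i) collapses \eqref{varphi_j}. For every $x\geqslant0$ and every $j$ with $0\leqslant j\leqslant k-1$ the term $i_j+p^jx$ equals $i_0+p^jx\geqslant i_0+x$, with equality at $j=0$, while the $j=k$ term is $p^kx$; so $\phi_k(x)=\min(p^kx,\,i_0+x)$ and hence
\[
\phi_{L/K}(x)=\tfrac1{p^k}\phi_k(x)=\min\!\left(x,\ \tfrac{i_0+x}{p^k}\right).
\]
The two affine pieces have different slopes and meet exactly at $x=\frac{i_0}{p^k-1}=\B$, which is positive because $i_0\geqslant i_{k-1}>0$; concretely $\phi_{L/K}(x)=x$ for $0\leqslant x\leqslant\B$ and $\phi_{L/K}(x)=\frac{i_0+x}{p^k}$ for $x\geqslant\B$. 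Thus $\phi_{L/K}$ has a single corner, at $\B$. Since the lower ramification breaks of $L/K$ are precisely the abscissae at which $\phi_{L/K}$ changes slope, $\B$ is the unique lower break; and since $\phi_{L/K}(\B)=\B$, it is also the unique upper break.

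The only substantive step is the inequality $i_{k-1}\geqslant1$ in part (i); the rest of (i) is bookkeeping with \eqref{order}, and (ii) is an elementary computation with the Hasse--Herbrand function once the $i_j$ are known. The one place I would be careful is the passage from the corners of $\phi_{L/K}$, in each of the two numberings, to the ramification breaks of the (possibly non-Galois) extension $L/K$; but that dictionary is exactly what the appendix to \cite{deligne} supplies, so it requires no new input here.
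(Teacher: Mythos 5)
Your proof is correct and follows essentially the same route as the paper: part (i) is the observation that the chain \eqref{order} together with ``exactly two values'' forces $i_0=\cdots=i_{k-1}>i_k=0$, and part (ii) is the computation $\phi_{L/K}(x)=p^{-k}\min\{i_0+x,\,p^kx\}$, whose single corner at $x=\B$ (with $\phi_{L/K}(\B)=\B$) gives the unique lower and upper break. The only difference is cosmetic: you re-derive the strict positivity $i_{k-1}\geqslant1$ directly from the Eisenstein condition via \eqref{i_jtilde} and \eqref{i_j}, whereas the paper simply invokes the strict inequality already recorded in \eqref{order}.
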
 

\begin{proof}
Statement (i) follows from (\ref{order}).  To prove
statement (ii), we use (i) to show that
\begin{align} \nonumber
\phi_{L/K}(x)&=p^{-k}\phi_k(x) \\
&=p^{-k}\cdot\min\{i_0+x,p^kx\} \nonumber \\
&=\begin{cases}
x&\text{if }x\leqslant\B \\
\frac{x+i_0}{p^k}&\text{if }x\geqslant\B
\end{cases} \label{phiLK}
\end{align}
for all $x\geqslant0$.  It follows that $\B$ is the
unique upper and lower ramification break of $L/K$.
\end{proof}

The functions $\rho_h$ given in (\ref{rhoh}) are easy to
describe in this setting.  Let $1\leqslant h\leqslant
p^k-1$ and set $j=v_p(h)$.  Then $v_p(h)\leqslant k-1$, so we
have $\phi_j(\ell)=i_0+\ell$.  Therefore
\[\rho_h(\ell)=\left\lceil\frac{i_0+\ell+h}{p^k}
\right\rceil.\]
For $h=p^k$ we get $\phi_k(\ell)=p^k\phi_{L/K}(\ell)$.
Hence by (\ref{rhoh}) and (\ref{phiLK}) we have
\begin{equation} \label{rhopk}
\rho_{p^k}(\ell)=\lceil\phi_{L/K}(\ell)\rceil+1
=\begin{cases}
\ell+1&\text{if }\ell\leqslant\B \\
\left\lceil\frac{i_0+\ell}{p^k}\right\rceil+1
&\text{if }\ell\geqslant\B.
\end{cases}
\end{equation}

We will need the following basic facts:

\begin{lemma}
\label{lemma0}
If $k\geqslant 2$ and $L/K$ has two distinct indices of
inseparability then $i_{j} =\widetilde{\imath}_j$ for
all $0\leqslant j \leqslant k$.
\end{lemma}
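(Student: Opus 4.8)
The plan is to reduce the statement to a handful of elementary inequalities among the integers $\widetilde{\imath}_j$ and $i_j$. First I would dispose of the case char$(K)=p$: there $e_K=\infty$ and $i_j=\widetilde{\imath}_j$ holds automatically, as noted just after Definition~\ref{def-i_j}, so I may assume $1\leqslant e_K<\infty$, and hence $ne_K=p^ke_K\geqslant1$. Then I would collect the facts to be used throughout: the sequence $\widetilde{\imath}_0\geqslant\widetilde{\imath}_1\geqslant\cdots\geqslant\widetilde{\imath}_k$ is nonincreasing, because enlarging $j$ enlarges the index set in \eqref{i_jtilde}; one always has $i_j\leqslant\widetilde{\imath}_j$, by taking $j'=j$ in \eqref{i_j}; the Eisenstein condition on $f$ forces $nv_K(c_h)-h\geqslant n-h\geqslant0$ with equality at $h=n=p^k$, so $\widetilde{\imath}_k=i_k=0$; and, crucially, Proposition~\ref{break}(i) gives $i_0=i_1=\cdots=i_{k-1}$.

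The first real step is the bound $i_0\leqslant ne_K$, obtained by applying \eqref{i_j} with $j=k-1$ and using $\widetilde{\imath}_k=0$: one gets $i_0=i_{k-1}=\min\{\widetilde{\imath}_{k-1},\,ne_K\}\leqslant ne_K$. The second step is to locate where the minimum
\[
i_0=\min\{\widetilde{\imath}_{j'}+j'ne_K:0\leqslant j'\leqslant k\}
\]
(which is \eqref{i_j} for $j=0$) is attained, say at an index $j_*$. If $j_*=k$, then $i_0=\widetilde{\imath}_k+kne_K=kne_K\geqslant2ne_K>ne_K\geqslant i_0$, using $k\geqslant2$ and $ne_K\geqslant1$, a contradiction. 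If $1\leqslant j_*\leqslant k-1$, then $\widetilde{\imath}_{j_*}\geqslant i_{j_*}=i_0$ by Proposition~\ref{break}(i), so $i_0=\widetilde{\imath}_{j_*}+j_*ne_K\geqslant i_0+j_*ne_K$, which is impossible since $j_*ne_K\geqslant1$. Hence $j_*=0$, i.e.\ $\widetilde{\imath}_0=i_0$. The third step is a squeeze: combining $\widetilde{\imath}_0=i_0$ with monotonicity and with $\widetilde{\imath}_{k-1}\geqslant i_{k-1}=i_0$ gives
\[
i_0=\widetilde{\imath}_0\geqslant\widetilde{\imath}_1\geqslant\cdots\geqslant\widetilde{\imath}_{k-1}\geqslant i_0,
\]
forcing $\widetilde{\imath}_j=i_0=i_j$ for $0\leqslant j\leqslant k-1$; together with $\widetilde{\imath}_k=i_k=0$ this is the assertion.

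I expect the only delicate point to be the exclusion of $j_*=k$ in the second step, and this is precisely where the hypothesis $k\geqslant2$ enters: the bound $i_0\leqslant ne_K$ is compatible with $i_0=kne_K$ only when $k=1$. Correspondingly the statement genuinely fails for $k=1$ — for instance $f(X)=X^p-\pi_K$ defines a degree-$p$ extension with two distinct indices of inseparability for which $i_0=pe_K$ but $\widetilde{\imath}_0=\infty$ — so there is no reason to expect a proof that avoids this hypothesis. Everything else is routine bookkeeping with the monotonicity of the $\widetilde{\imath}_j$ and the equalities $i_0=\cdots=i_{k-1}$ supplied by Proposition~\ref{break}.
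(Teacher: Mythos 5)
Your proof is correct, and it takes a noticeably different route from the paper's. The paper argues index by index: for $0\leqslant j\leqslant k-2$ it uses the recursion $i_j=\min\{\widetilde{\imath}_j,\,i_{j+1}+p^ke_K\}$ together with $i_j=i_{j+1}$ from Proposition~\ref{break}(i) to force $i_j=\widetilde{\imath}_j$, but this argument breaks down at $j=k-1$ (since $i_k=0\neq i_{k-1}$), and there the paper switches to a divisibility argument: if $i_{k-1}\neq\widetilde{\imath}_{k-1}$ then $i_{k-1}=p^ke_K$, which would make $\widetilde{\imath}_0=i_0=i_{k-1}$ divisible by $p$, impossible because $\widetilde{\imath}_0$ is computed only from indices $h$ with $p\nmid h$. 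You avoid the divisibility step entirely: you pin down $\widetilde{\imath}_0=i_0$ by analyzing where the minimum defining $i_0$ is attained (excluding $j_*=k$ via the bound $i_0\leqslant p^ke_K$, which you derive directly from \eqref{i_j} at $j=k-1$ and which is essentially Lemma~\ref{i0bound}, and excluding $1\leqslant j_*\leqslant k-1$ via $\widetilde{\imath}_{j_*}\geqslant i_{j_*}=i_0$), and then you propagate equality to all $j\leqslant k-1$, including the delicate case $j=k-1$, by the monotonicity $\widetilde{\imath}_0\geqslant\widetilde{\imath}_1\geqslant\cdots\geqslant\widetilde{\imath}_{k-1}\geqslant i_{k-1}=i_0$ — a squeeze the paper never invokes. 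Both proofs use $k\geqslant2$ in an essential (but differently placed) way, and your closing remark correctly identifies why the statement fails for $k=1$. What your approach buys is a uniform treatment of all indices with no appeal to $p\nmid\widetilde{\imath}_0$; what the paper's buys is that it makes explicit the congruence fact about $\widetilde{\imath}_0$, which is reused later (e.g., in the proof of Lemma~\ref{i0bound}).
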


\begin{proof}
First observe that $i_k = \widetilde{\imath}_k =0$. 
Suppose $i_{j} \neq \widetilde{\imath}_j$ for some
$0\leqslant j \leqslant k-2$. Then $i_j= i_{j+1} +p^k
e_K > i_{j+1}=i_j$, a contradiction. Therefore
$i_j =\widetilde{\imath}_j$ for all $j\not=k-1$.
In particular, since $k\geqslant2$ we
have $i_0=\widetilde{\imath}_0$.  We need to show that
$i_{k-1} = \widetilde{\imath}_{k-1}$. Suppose not. Then
$i_{k-1} =i_{k} +p^k e_K =p^k e_K$.  Hence by
Proposition~\ref{break}(i) we have $p^k e_K=i_{k-1} =i_0
=\widetilde{\imath}_0$.  This is a contradiction since
$\widetilde{\imath}_0$ is not divisible by $p$.  Thus
$i_{k-1}=\widetilde{\imath}_{k-1}$.  We conclude that $i_{j}
= \widetilde{\imath}_j$ for all $0\leqslant j \leqslant k$. 
\end{proof}

\begin{lemma} \label{i0bound}
If $k\geqslant 2$ and $L/K$ has two distinct indices of
inseparability then $i_0<p^ke_K$.
\end{lemma}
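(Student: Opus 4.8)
The plan is to deduce this quickly from Proposition~\ref{break}, Lemma~\ref{lemma0}, and the definition~\eqref{i_j} of the $i_j$, together with the observation that $\widetilde{\imath}_0$ is never divisible by $p$.

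First I would observe that, by \eqref{i_j} applied with $j=k-1$ and using $\widetilde{\imath}_k=0$,
\[
i_{k-1}=\min\{\widetilde{\imath}_{k-1},\ \widetilde{\imath}_k+p^ke_K\}=\min\{\widetilde{\imath}_{k-1},\ p^ke_K\}\leqslant p^ke_K.
\]
Since $L/K$ has two distinct indices of inseparability, Proposition~\ref{break}(i) gives $i_0=i_{k-1}$, and hence $i_0\leqslant p^ke_K$. It therefore remains only to rule out $i_0=p^ke_K$.

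For this I would invoke Lemma~\ref{lemma0} (which applies because $k\geqslant2$) to get $i_0=\widetilde{\imath}_0$. By \eqref{order} we have $i_0<\infty$, so $\widetilde{\imath}_0<\infty$ and the minimum in \eqref{i_jtilde} defining $\widetilde{\imath}_0$ is attained at some $h$ with $p\nmid h$; then $\widetilde{\imath}_0=p^kv_K(c_h)-h\equiv-h\not\equiv0\pmod p$. Since $p\mid p^ke_K$, this forces $i_0\neq p^ke_K$, and combined with $i_0\leqslant p^ke_K$ we conclude $i_0<p^ke_K$.

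There is no serious obstacle in this argument; the one point worth care is that it genuinely requires $k\geqslant2$ (through Lemma~\ref{lemma0}), and in fact the conclusion is false for $k=1$ --- for example $L=K(\pi_K^{1/p})$ satisfies $i_0=pe_K$. If one prefers not to cite Lemma~\ref{lemma0}, the same contradiction can be reached directly: if $i_0=p^ke_K$ then the equality $i_1=i_0$ from Proposition~\ref{break}(i) makes the term $i_1+p^ke_K$ in $i_0=\min\{\widetilde{\imath}_0,\,i_1+p^ke_K\}$ strictly larger than $p^ke_K$, forcing $i_0=\widetilde{\imath}_0$ and contradicting $p\nmid\widetilde{\imath}_0$ as above.
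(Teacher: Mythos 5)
Your argument is correct and is essentially the paper's own proof: both derive $i_0=i_{k-1}\leqslant p^ke_K$ from Proposition~\ref{break}(i) and the definition \eqref{i_j}, then use Lemma~\ref{lemma0} to identify $i_0$ with $\widetilde{\imath}_0$ and rule out equality because $p\nmid\widetilde{\imath}_0$. Your extra remarks (explicitly justifying $p\nmid\widetilde{\imath}_0$ and noting the failure for $k=1$) just spell out details the paper leaves implicit.
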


\begin{proof}
By Lemma~\ref{lemma0}, Proposition~\ref{break}(i), and
(\ref{i_j}) we have
\[\widetilde{\imath}_0=i_0= i_{k-1}
\leqslant i_k+p^ke_K=p^ke_K.\]
Since $p\nmid\widetilde{\imath}_0$ the inequality is
strict.
\end{proof}

\section{Perturbing uniformizers of $L$} \label{quote}

We will make frequent use of two theorems from
\cite{perturb}.  First we state the ``stability
theorem''.

\begin{theorem}[\cite{perturb}, Theorem 4.3]
\label{stability}
Let $\pi_L,\pitilde_L$ be uniformizers for $L$ with
minimum polynomials $f(X)$, $\ftilde(X)$ given by
(\ref{pi_L}), (\ref{pi-tilde}).  Assume there is
$\ell\geqslant1$ such that $\pitilde_L\equiv\pi_L
\pmod{\M_L^{\ell+1}}$.  Then $f\sim_{\ell}\ftilde$.
\end{theorem}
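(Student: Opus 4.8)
The plan is to pass from the uniformizers to their full sets of conjugate roots, express each difference $\ctilde_h-c_h$ as a sum of symmetric functions of the roots of $f$, and then bound the valuations of these symmetric functions using the defining relations \eqref{i_jtilde}--\eqref{i_j}.

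First I would put things in a convenient normal form. Since $L/K$ is totally ramified, $\OO_L=\OO_K[\pi_L]$, so $\pitilde_L=g(\pi_L)$ for some $g(X)\in\OO_K[X]$ with $\deg g<n$, and the hypothesis becomes $v_L(g(\pi_L)-\pi_L)\geqslant\ell+1$. Extend $v_L$ to $\Omega$. If $\pi_L=\pi_L^{(1)},\dots,\pi_L^{(n)}$ are the roots of $f$, then $g(\pi_L^{(1)}),\dots,g(\pi_L^{(n)})$ are the roots of $\ftilde$, and each $\delta_i:=g(\pi_L^{(i)})-\pi_L^{(i)}$ is a $K$-conjugate of $\delta:=g(\pi_L)-\pi_L$, so $v_L(\delta_i)=v_L(\delta)\geqslant\ell+1$ for every $i$. (Since $\sim_\ell$ is an equivalence relation, by Definition~\ref{l-equiv}, and imposes a closed condition, an approximation argument further reduces to the case where $g(\pi_L)-\pi_L$ is a single term $\gamma\pi_L^{s}$ with $s\geqslant\ell+1$ and $v_L(\gamma)=0$, which pins the $\delta_i$ down explicitly; I would carry out the general estimates and specialize only if needed.)

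Expanding elementary symmetric functions, $c_h$ and $\ctilde_h$ are the $h$th elementary symmetric functions of the $\pi_L^{(i)}$ and of the $\pi_L^{(i)}+\delta_i$, whence
\[
\ctilde_h-c_h=\sum_{m=1}^{h}\Theta_{h,m},\qquad
\Theta_{h,m}:=\sum_{\substack{T\subseteq\{1,\dots,n\}\\|T|=m}}\Bigl(\prod_{i\in T}\delta_i\Bigr)\,e_{h-m}\bigl(\{\pi_L^{(i')}:i'\notin T\}\bigr),
\]
and each $\Theta_{h,m}$, being symmetric in $\pi_L^{(1)},\dots,\pi_L^{(n)}$, lies in $K$ (indeed in $\OO_K$). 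The linear term $\Theta_{h,1}$ admits a clean description: with $\delta(X)=g(X)-X$, the polynomial $\sum_i\delta(\pi_L^{(i)})\,f(X)/(X-\pi_L^{(i)})$ has degree $<n$ and agrees with $\delta(X)f'(X)$ at every root of $f$, so it equals $\delta(X)f'(X)\bmod f(X)$; comparing coefficients of $X^{n-h}$ shows that $\Theta_{h,1}$ is, up to sign, the coefficient of $X^{n-h}$ in $\delta(X)f'(X)\bmod f(X)$.

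Now the estimates. For the linear term, $\delta(\pi_L)f'(\pi_L)\in\M_L^{\ell+1}\mathfrak d_{L/K}$, and $v_L(\mathfrak d_{L/K})=i_0+n-1$ (which follows from $\phi_{L/K}=\tfrac1n\phi_k$ together with the formula for the different in terms of the Hasse--Herbrand function), so this element has valuation $\geqslant\ell+i_0+n$; since $\pi_L^{0},\pi_L^{1},\dots,\pi_L^{n-1}$ have pairwise distinct valuations modulo $n$, no cancellation can occur when the degree-$<n$ remainder is evaluated at $\pi_L$, and hence every coefficient of that remainder — in particular $\Theta_{h,1}$ — has valuation $\geqslant(\ell+i_0+n)-(n-h)=\ell+i_0+h\geqslant\phi_{v_p(h)}(\ell)+h$. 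For $m\geqslant2$, and more generally wherever the minimum defining $\phi_{v_p(h)}(\ell)$ is attained at some $j_0>0$, one must extract extra valuation from the factors $e_{h-m}(\cdot)$: here I would rewrite $\Theta_{h,m}$ in terms of the $c_i$ and the coefficients of $g$, apply $v_L(c_i)\geqslant\widetilde{\imath}_{v_p(i)}+i\geqslant i_{v_p(i)}+i$ together with Kummer's estimates for the $p$-adic valuations of the binomial coefficients that appear (these supply the $ne_K$-terms of \eqref{i_j}), and collect the contributions according to the $p$-adic valuations of the relevant indices; grouping them yields exactly $\phi_{v_p(h)}(\ell)=\min_{0\leqslant j_0\leqslant v_p(h)}(i_{j_0}+p^{j_0}\ell)$. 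This gives $v_L(\ctilde_h-c_h)\geqslant\phi_{v_p(h)}(\ell)+h$, i.e.\ $v_K(\ctilde_h-c_h)\geqslant\rho_h(\ell)$ for every $h$ (using $v_L=nv_K$ on $K$ and the definition \eqref{rhoh} of $\rho_h$).

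The main obstacle is the contribution of the higher-order terms $\Theta_{h,m}$ with $m\geqslant2$ and the sharpness of the resulting bound. A crude estimate, $v_L(\prod_{i\in T}\delta_i)\geqslant m(\ell+1)$ and $v_L(e_{h-m}(\cdot))\geqslant h-m$, only gives $v_L(\Theta_{h,m})\geqslant m\ell+h$, which falls short of $\phi_{v_p(h)}(\ell)+h$ whenever $\phi_{v_p(h)}(\ell)>m\ell$ — for instance for small $\ell$, or (in the two-indices case) for $\ell<\B$. One genuinely needs that symmetric functions of the roots of an Eisenstein polynomial carry valuation strictly in excess of their total degree, with the excess measured by the indices of inseparability; and it is the Frobenius ($p$-power) structure of the residue field — visible in the norm and trace maps and in Kummer's formula — that forces $i_{j_0}$ and $p^{j_0}\ell$ to enter $\phi_{v_p(h)}$ in tandem. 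Bookkeeping this excess, uniformly in $h$, $\ell$, and $m$, and across all the $\Theta_{h,m}$, is where the real work lies.
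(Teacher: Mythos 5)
The paper does not reprove this result---it is quoted from \cite{perturb}, Theorem 4.3---so there is no internal proof to compare against; judged on its own terms, your proposal has a genuine gap at exactly the point where the theorem has content. Your setup is sound: writing $\pitilde_L=g(\pi_L)$ with $g\in\OO_K[X]$, noting that the roots of $\ftilde(X)$ are the $g(\pi_L^{(i)})$ and that $\delta_i=g(\pi_L^{(i)})-\pi_L^{(i)}$ all have $v_L(\delta_i)\geqslant\ell+1$, and decomposing $\ctilde_h-c_h=\sum_m\Theta_{h,m}$ is correct. Your treatment of the linear term is also essentially complete: the identification of $\Theta_{h,1}$ with a coefficient of $\delta(X)f'(X)\bmod f(X)$ is right, the formula $v_L(\mathfrak{d}_{L/K})=i_0+n-1$ is a known fact (though your justification via the Hasse--Herbrand function is loose for non-Galois $L/K$), and the resulting bound $v_L(\Theta_{h,1})\geqslant i_0+\ell+h\geqslant\phi_{v_p(h)}(\ell)+h$ suffices because $\phi_j(\ell)\leqslant i_0+\ell$ for every $j$.

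The gap is the terms $\Theta_{h,m}$ with $2\leqslant m\leqslant h$. As you note yourself, the crude bound $v_L(\Theta_{h,m})\geqslant m\ell+h$ genuinely falls short of $\phi_{v_p(h)}(\ell)+h$ (for instance whenever every $i_{j_0}+p^{j_0}\ell$ exceeds $m\ell$), so one needs a statement to the effect that a symmetric expression built from $m$ conjugate factors of valuation at least $\ell+1$ and $h-m$ root factors has $v_L$ at least $\phi_{v_p(h)}(\ell)+h$. This is a mixed-term strengthening of Proposition~\ref{prop-elem-sym} (Theorem~4.6 of \cite{elem-sym}), which is itself a nontrivial theorem, and it is precisely where the indices of inseparability and the $p$-power structure must enter. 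Your proposal only gestures at it (``rewrite $\Theta_{h,m}$ in terms of the $c_i$ and the coefficients of $g$, apply Kummer's estimates, collect the contributions'') and then concedes that the uniform bookkeeping ``is where the real work lies.'' Since everything else in the argument is routine, what you have is a correct plan whose central estimate is missing, not a proof. A further caution: the parenthetical reduction to the case $g(\pi_L)-\pi_L=\gamma\pi_L^{s}$ is not innocent, because removing one term at a time replaces $\pi_L$, and hence $f$, at each step, so chaining the congruences of Definition~\ref{l-equiv} requires an induction that quietly invokes the very statement being proved; it is better to avoid that reduction, as you indicate you would.
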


Theorem~\ref{stability} gives a relationship between the
coefficients of the minimum polynomial for $\pi_L$ and
the coefficients of the minimum polynomial of its
perturbation $\pitilde_L$, essentially saying
that if the perturbation is small (that is, if
$\pitilde_L -\pi_L$ has large valuation) then the
coefficients of the corresponding minimum polynomials
are close (that is, the difference $\ctilde_h -c_h$ has
large valuation).  The next result, the ``perturbation
theorem'', refines Theorem~\ref{stability} for certain
special values of $h$. Theorems~\ref{stability} and
\ref{perturbation} form the basis of the iterative
procedure which we use to prove our main result.

\begin{theorem}[\cite{perturb}, Theorem 4.5]
\label{perturbation} Let $K$ be a local field whose
residue field has characteristic $p$ and let $L/K$ be a
finite separable totally ramified extension of degree
$n=up^k$, with $p\nmid u$.  For $0 \leqslant j \leqslant k$ write the
$j$th index of inseparability of $L/K$ in the form
$i_j= A_j n -b_j$ with $1\leqslant b_j \leqslant n$. Let
$\pi_L, \pitilde_L$ be uniformizers for $L$ such that
there are $\ell \geqslant 1$ and $r\in \OO_K$ with
$\pitilde_L \equiv \pi_L +r \pi_L^{\ell +1}
\pmod{\M_L^{\ell +2}}$. Let $0\leqslant j
\leqslant k$ satisfy $\min\{v_p(\phi_j(\ell)), k\} =j$,
and let $h$ be the unique integer such that $1\leqslant
h \leqslant n$ and $n$ divides $\phi_j(\ell) +h$. Set
$t=(\phi_j(\ell) +h)/n$ and $h_0 =h/p^j$. Then 
\[ \ctilde_h \equiv c_h +\sum_{m \in S_j} g_m
c_n^{t-A_m}c_{b_m}r^{p^m} \pmod{\M_K^{t+1}},\] 
where 
\[ S_j =\{ m: \, 0 \leqslant m \leqslant j, \, \phi_j(\ell)=\widetilde{\phi}_m(\ell)\} \]
and 
\[ g_m=
\begin{cases}
(-1)^{t+\ell+A_m}(h_0 p^{j-m}+\ell -up^{k-m})
&\text{if } 1\leqslant b_m <h, \\
(-1)^{t+\ell+A_m}(h_0p^{j -m} +\ell) 
& \text{if } h \leqslant b_m < n,\\
(-1)^{t+\ell +A_m}up^{k-m}&\text{if } b_m=n. 
\end{cases} \]
\end{theorem}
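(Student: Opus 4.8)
The plan is to compute the coefficients of $\ftilde$ directly from those of $f$ by regarding both as symmetric functions of the conjugates of $\pi_L$, and then to extract the leading behaviour modulo $\M_K^{t+1}$ via Newton's identities together with sharp valuation bounds on power sums — the latter being exactly what the indices of inseparability encode. One first reduces to the case $\pitilde_L=\pi_L+r\pi_L^{\ell+1}$: this is a uniformizer since $\ell+1\geqslant 2$, and if $\pitilde_L$ differs from it by an element of $\M_L^{\ell+2}$ then Theorem~\ref{stability}, applied with $\ell+1$ in place of $\ell$, shows that the corresponding minimum polynomials have $h$th coefficients congruent modulo $\M_K^{\rho_h(\ell+1)}$; since each $\widetilde{\phi}_{j_0}$ has slope $p^{j_0}\geqslant 1$ we get $\phi_j(\ell+1)\geqslant\phi_j(\ell)+1$, hence $\rho_h(\ell+1)\geqslant t+1$, so this discrepancy is invisible modulo $\M_K^{t+1}$ (and the right-hand side of the claimed congruence does not change under the replacement).

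Let $\pi_L=\pi_L^{(1)},\dots,\pi_L^{(n)}$ be the conjugates of $\pi_L$ over $K$, so that $f(X)=\prod_i(X-\pi_L^{(i)})$ and, with the signs of \eqref{pi_L}, $c_a=e_a(\pi_L^{(1)},\dots,\pi_L^{(n)})$. Since $r\in\OO_K$ is fixed by every embedding of $L$ into $\Omega$, the conjugates of $\pitilde_L$ are $\pi_L^{(i)}+r(\pi_L^{(i)})^{\ell+1}$, and hence
\[
\ftilde(X)=\prod_i\bigl(X-\pi_L^{(i)}-r(\pi_L^{(i)})^{\ell+1}\bigr)
=f(X)\prod_i\Bigl(1-\frac{r(\pi_L^{(i)})^{\ell+1}}{X-\pi_L^{(i)}}\Bigr).
\]
Expanding the last product over subsets $T\subseteq\{1,\dots,n\}$ and using that $f(X)/\prod_{i\in T}(X-\pi_L^{(i)})$ is a polynomial, the coefficient of $X^{n-h}$ in $\ftilde-f$ becomes a $\Z$-linear combination of the $c_a$ and of the symmetrizations of $(-r)^{|T|}\prod_{i\in T}(\pi_L^{(i)})^{\ell+1}$. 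Writing $P_m=\sum_i(\pi_L^{(i)})^m$ and using $e_s(\{\pi_L^{(i')}\}_{i'\neq i})=\sum_{u=0}^s(-1)^u(\pi_L^{(i)})^u c_{s-u}$, the $|T|=1$ part contributes $-r\sum_{a=0}^{h-1}(-1)^a c_a P_{\ell+h-a}$; more generally the parts with $|T|=p^m$ give, after collecting the norms $\prod_{i\in T}(\pi_L^{(i)})^{\ell+1}$ into powers of $c_n$, the terms involving $r^{p^m}$ (the case $m=0$ being the one just computed), while the parts with $|T|$ not a power of $p$ carry $p$-divisible binomial coefficients and drop out modulo $\M_K^{t+1}$.

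The crux is then to show that modulo $\M_K^{t+1}$ only finitely many of these contributions survive and that they assemble into $\sum_{m\in S_j}g_m c_n^{t-A_m}c_{b_m}r^{p^m}$. This uses the Eisenstein bounds $v_K(c_a)\geqslant 1$ for $1\leqslant a\leqslant n-1$ and $v_K(c_n)=1$, together with precise valuation estimates for the power sums $P_m$ (and the higher subproducts) in terms of the generalized Hasse--Herbrand functions $\phi_j$ — essentially Heiermann's characterization of the indices of inseparability, which fixes $v_L(P_m)$ and shows that the leading term of the relevant combination is a power of $c_n$ times the ``inseparable'' coefficient $c_{b_m}$, with $m$ ranging over those indices for which $\widetilde{\phi}_m(\ell)=\phi_j(\ell)$, i.e.\ over $S_j$. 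One then rewrites $\sum_{a=0}^{h-1}(-1)^a c_a P_{\ell+h-a}$ using Newton's identities (the recursion $\sum_{a=0}^{n}(-1)^a c_a P_{m-a}=0$ for $m>n$, or the short identity $\sum_{a=0}^{m-1}(-1)^a c_a P_{m-a}=(-1)^{m-1}m c_m$ for $1\leqslant m\leqslant n$) so as to express everything through the tail coefficients $c_h,\dots,c_n$, and matches the resulting leading terms against the definitions of $\phi_j$, $A_m$, $b_m$ and $S_j$. This yields the $g_m$: the three cases $1\leqslant b_m<h$, $h\leqslant b_m<n$, $b_m=n$ reflect whether $c_{b_m}$ sits to the left of, to the right of, or exactly at position $n$ relative to the coefficient of $X^{n-h}$, and the signs come from the parity bookkeeping in $g_m$. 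I expect the sharp power-sum estimates to be the main obstacle: they must determine, not merely bound, the leading term of each $P_m$ for the leading terms to cancel and combine correctly.
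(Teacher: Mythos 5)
This statement is quoted from \cite{perturb} (Theorem 4.5) and the present paper gives no internal proof, so your attempt can only be judged on its own terms. Your preliminary reduction is sound: replacing $\pitilde_L$ by $\pi_L+r\pi_L^{\ell+1}$ costs nothing because $\phi_j(\ell+1)\geqslant\phi_j(\ell)+1$ gives $\rho_h(\ell+1)\geqslant t+1$, and expanding $\ctilde_h=e_h\bigl(\pi_L^{(i)}(1+r(\pi_L^{(i)})^{\ell})\bigr)$ in powers of $r$ is a reasonable way to begin. But the proposal stops exactly where the theorem lives, and the one mechanism you do offer for the first half of the work is not right as stated: the coefficient of $r^s$ in $\ctilde_h$ is $\sum_{|T|=s}\prod_{i\in T}(\pi_L^{(i)})^{\ell+1}\,e_{h-s}\bigl(\{\pi_L^{(i')}\}_{i'\notin T}\bigr)$, a mixed partial symmetrization carrying no binomial coefficient, so the terms with $s$ not a power of $p$ do not ``drop out because of $p$-divisible binomial coefficients.'' Whether they land in $\M_K^{t+1}$ is a question of sharp valuation estimates for such expressions (of the type of Proposition~\ref{prop-elem-sym}, but finer), and in mixed characteristic any $p$-divisibility gain must be weighed against $e_K$ and against the quantities $\widetilde{\phi}_m(\ell)=i_m+p^m\ell$; it is precisely the comparison of these quantities, i.e.\ the set $S_j$, that decides which powers $r^{p^m}$ survive.

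The second and larger gap is the identification of the surviving leading terms. The theorem asserts an exact congruence: each surviving contribution equals $g_m\,c_n^{t-A_m}c_{b_m}r^{p^m}$ modulo $\M_K^{t+1}$, with the specific integers $g_m$ distinguishing the three cases $1\leqslant b_m<h$, $h\leqslant b_m<n$, $b_m=n$ (note in particular the correction term $-up^{k-m}$ in the first case). Your outline asserts that Newton's identities plus ``precise valuation estimates for the power sums $P_m$'' will produce this, but no such estimate is proved, and you concede the point yourself (``I expect the sharp power-sum estimates to be the main obstacle: they must determine, not merely bound, the leading term''). Lower bounds of the kind furnished by the indices of inseparability do not determine leading coefficients; one must know which coefficient $c_{b_m}$ realizes the minimum in \eqref{i_jtilde} and track it, with signs and the combinatorial factors $h_0p^{j-m}+\ell$, through the symmetric-function bookkeeping. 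As it stands the proposal is a plausible plan for re-deriving the result, not a proof of it.
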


We can specialize Theorem~\ref{perturbation} to our
setting:

\begin{proposition} \label{simple}
Let $K$ be a local field whose residue field
$\overline{K}$ has characteristic $p$ and let $L/K$ be a
finite totally ramified extension of degree $p^k$ which
has two distinct indices of inseparability.  Write
$i_0=p^kA_0-b_0$ with $1\leqslant b_0\leqslant p^k$.
Let $\pi_L, \pitilde_L$ be uniformizers for $L$
such that there are $\ell \geqslant 1$ and $r\in \OO_K$
such that $\pitilde_L \equiv \pi_L +r
\pi_L^{\ell +1} \pmod{\M_L^{\ell +2}}$.  Recall from
Proposition~\ref{break}(ii) that $\B=\dfrac{i_0}{p^k-1}$ is
the unique upper and lower ramification break of $L/K$.
\begin{enumerate}[(i)]
\item If $\ell<\B$ then
\[\ctilde_{p^k}\equiv c_{p^k}+c_{p^k}^{\ell+1}r^{p^k}
\pmod{\M_K^{\ell+2}}.\]
\item If $p^k-1\mid i_0$ and $\ell=\B$ then
\[\ctilde_{p^k}\equiv
c_{p^k} +(-1)^{A_0+1}b_0c_{p^k}^{\ell+1-A_0}c_{b_0}r
+c_{p^k}^{\ell+1}r^{p^k}\pmod{\M_K^{\ell+2}}.\]
\item Suppose $\ell>\B$ or
$\ell\not\equiv b_0\pmod{p^k}$.  Let $h$ be the unique
integer such that $1\leqslant h \leqslant p^k$ and $p^k$
divides $i_0+\ell+h$.  Set $t=(i_0+\ell+h)/p^k$.  Then
\[\ctilde_h\equiv
c_h +(-1)^{t+\ell+A_0}b_0c_{p^k}^{t-A_0}c_{b_0}r
\pmod{\M_K^{t+1}}.\]
\end{enumerate}
\end{proposition}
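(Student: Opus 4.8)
The plan is to derive Proposition~\ref{simple} by carefully instantiating Theorem~\ref{perturbation} with $n=p^k$ and $u=1$, using the structure of the indices of inseparability recorded in Proposition~\ref{break}(i). Since $L/K$ has two distinct indices, we have $i_0=i_1=\cdots=i_{k-1}$ and $i_k=0$, so in the notation $i_j=A_jn-b_j$ of Theorem~\ref{perturbation} we get $A_j=A_0$ and $b_j=b_0$ for all $0\leqslant j\leqslant k-1$, while $i_k=0$ forces $A_k=1$ and $b_k=n=p^k$. A key preliminary step is to make the generalized Hasse--Herbrand functions completely explicit: from the formulas following Proposition~\ref{break}, for $\ell<\B$ we have $\phi_j(\ell)=p^k\ell$ for every $j$ (since $\widetilde\phi_k(\ell)=p^k\ell$ is the minimum when $\ell<\B$), whereas for $\ell\geqslant\B$ we have $\phi_j(\ell)=i_0+\ell$ for $0\leqslant j\leqslant k-1$ and $\phi_k(\ell)=p^k\ell$ vs.\ $i_0+\ell$, with $i_0+\ell$ winning when $\ell\geqslant\B$. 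I would organize the proof into the three cases (i), (ii), (iii) according to where $\ell$ sits relative to $\B$ and the congruence condition on $\ell$.

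In case (i), $\ell<\B$ gives $\phi_j(\ell)=p^k\ell$ for all $j$, so $v_p(\phi_j(\ell))\geqslant k$ and the index selected by the condition $\min\{v_p(\phi_j(\ell)),k\}=j$ is $j=k$. Then $n\mid\phi_k(\ell)+h$ with $1\leqslant h\leqslant n$ forces $h=n=p^k$, and $t=(p^k\ell+p^k)/p^k=\ell+1$. The set $S_k=\{m:0\leqslant m\leqslant k,\ \phi_k(\ell)=\widetilde\phi_m(\ell)\}$: since $\widetilde\phi_m(\ell)=i_m+p^m\ell$, for $m<k$ this equals $i_0+p^m\ell>p^k\ell$ (using $\ell<\B$ and $i_0=(p^k-1)\B$), so only $m=k$ qualifies, $S_k=\{k\}$. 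With $b_k=n$ we use the third branch of $g_m$: $g_k=(-1)^{t+\ell+A_k}up^{k-k}=(-1)^{\ell+1+\ell+1}\cdot1=1$, and the sum is $g_k c_n^{t-A_k}c_{b_k}r^{p^k}=1\cdot c_{p^k}^{\ell+1-1}c_{p^k}r^{p^k}=c_{p^k}^{\ell+1}r^{p^k}$, giving (i). Case (ii) is the boundary case $\ell=\B$ (which requires $p^k-1\mid i_0$ so that $\B\in\mathbb{Z}$): now $\phi_k(\ell)=p^k\B=i_0+\B$, so $\widetilde\phi_0(\ell)=i_0+\B=\widetilde\phi_k(\ell)$ as well, and more care is needed to see exactly which $m$ lie in $S_k$ --- I expect $S_k=\{0,k\}$ (the intermediate $m$ with $0<m<k$ give $i_0+p^m\B>i_0+\B$). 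Then $h=p^k$, $t=\ell+1$, and the two terms $m=0$ and $m=k$ contribute: $m=k$ gives $c_{p^k}^{\ell+1}r^{p^k}$ as before, while $m=0$ uses the $b_0=n$? no --- $b_0$ is generically $<n$, so we use the first or second branch of $g_0$ depending on whether $b_0<h=p^k$; since $b_0\leqslant p^k$, if $b_0<p^k$ we get $g_0=(-1)^{t+\ell+A_0}(h_0 p^{j}+\ell-up^{k})$ with $h_0=h/p^j=p^k/p^k=1$, $j=k$, so $g_0=(-1)^{t+\ell+A_0}(p^k+\ell-p^k)=(-1)^{\ell+1+\ell+A_0}\ell=(-1)^{A_0+1}\ell$, and the term is $(-1)^{A_0+1}\ell\,c_{p^k}^{t-A_0}c_{b_0}r$. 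Wait --- the claimed coefficient is $(-1)^{A_0+1}b_0$, not $(-1)^{A_0+1}\ell$; but $\ell=\B=i_0/(p^k-1)$ and $i_0=p^kA_0-b_0$, so modulo $p$ one checks $\ell\equiv b_0$, which suffices since $c_{b_0}$ and $r$ are integral and we work mod $\M_K^{t+1}$ --- actually the comparison is mod $\M_K$ on the coefficient since the whole term has valuation $\geqslant t$; I would verify $\ell\equiv b_0\pmod p$ directly from $i_0\equiv0\pmod{p^k-1}$ and $p^k\equiv1\pmod{p-1}$. I need to also handle the sub-case $b_0=p^k$ separately in (ii) using the third branch, but then $i_0=p^kA_0-p^k=p^k(A_0-1)$ is divisible by $p$, contradicting $p\nmid i_0$ (from $i_0=\widetilde\imath_0$ via Lemma~\ref{lemma0}), so $b_0<p^k$ always and this sub-case is vacuous.

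Case (iii) is the generic case: either $\ell>\B$, or $\ell\not\equiv b_0\pmod{p^k}$. I claim that in both situations the selected index is $j=0$. If $\ell>\B$ then $\phi_0(\ell)=i_0+\ell$ and I must check $v_p(i_0+\ell)=0$; if instead $\ell\leqslant\B$ but $\ell\not\equiv b_0\pmod{p^k}$, again $\phi_0(\ell)=i_0+\ell$ when... hmm, actually when $\ell<\B$ we had $\phi_0(\ell)=p^k\ell$, so the hypothesis "$\ell>\B$ or $\ell\not\equiv b_0$" needs reconciling --- I believe the point is: when $\ell<\B$, $\phi_j(\ell)=p^k\ell$ for all $j$ so the selected $j$ is $\min\{v_p(p^k\ell),k\}$; writing $\ell=p^a\ell'$ with $p\nmid\ell'$, this is $\min\{k+a,k\}=k$, so we'd be in case (i)-type behavior, not (iii). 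So case (iii) with $\ell<\B$ can only arise... I think the correct reading is that when $\ell<\B$ and $\ell\not\equiv b_0\pmod{p^k}$ we still land in the $j=k$, $h=p^k$ scenario but the extra condition changes $S_k$ --- no. Let me reconsider: the condition $p^k\mid i_0+\ell+h$ determines $h\equiv -i_0-\ell\equiv b_0-\ell\pmod{p^k}$, so $h=p^k$ exactly when $\ell\equiv b_0\pmod{p^k}$; hence $\ell\not\equiv b_0\pmod{p^k}$ forces $h<p^k$, hence $p\nmid h$? not necessarily, but $h<p^k$ and we recompute $j=v_p(h)$... Actually I realize the clean statement is: in case (iii), $h<p^k=n$, so $v_p(h)=j\leqslant k-1$, hence $\phi_j(\ell)=i_0+\ell$ (valid for all $\ell\geqslant0$ when $j\leqslant k-1$? --- yes, because $\widetilde\phi_{j'}(\ell)=i_0+p^{j'}\ell\geqslant i_0+\ell$ for $j'\geqslant 1$... no wait, we need the min over $j_0\leqslant j$, and $\widetilde\phi_0(\ell)=i_0+\ell$ is the smallest among $j_0\leqslant j\leqslant k-1$). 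So $\phi_j(\ell)=i_0+\ell$, and $S_j=\{m\leqslant j:\phi_j(\ell)=\widetilde\phi_m(\ell)\}=\{m\leqslant j:i_0+\ell=i_0+p^m\ell\}=\{0\}$ (assuming $\ell\neq0$, which holds as $\ell\geqslant1$; if $\ell=0$ then all $p^m\ell$ coincide but $\ell\geqslant1$). So $S_j=\{0\}$ and we get a single term with $m=0$: $g_0 c_{p^k}^{t-A_0}c_{b_0}r$ where $g_0=(-1)^{t+\ell+A_0}(h_0 p^{j}+\ell-p^k)$ if $b_0<h$ or $(-1)^{t+\ell+A_0}(h_0 p^j+\ell)$ if $h\leqslant b_0<p^k$, with $h_0=h/p^j$. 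In either branch I need the coefficient to reduce to $(-1)^{t+\ell+A_0}b_0\pmod{p}$: note $h=p^j h_0$ and $i_0+\ell+h=p^k t$ so $h\equiv -i_0-\ell\equiv b_0-\ell\pmod{p^k}$, hence $h_0 p^j+\ell\equiv b_0\pmod{p^k}$ in the second branch --- good, that's exactly it mod $p^k$ hence mod $p$; in the first branch $h_0p^j+\ell-p^k\equiv b_0-p^k\equiv b_0\pmod{p^k}$ as well (since $p^k\equiv0$) --- wait that's wrong, $p^k\equiv 0\pmod{p^k}$ so $b_0-p^k\equiv b_0$, but we need it mod $p$ where $p^k\equiv 0$ too, fine. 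So in both branches the coefficient is $\equiv b_0\pmod p$, and since the term has valuation $\geqslant t$ and we're computing mod $\M_K^{t+1}$, replacing the coefficient by $b_0$ changes the term by something of valuation $\geqslant t+1$. This gives (iii).

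The main obstacle I anticipate is bookkeeping the parity signs and the reductions "coefficient $\equiv b_0 \pmod p$" cleanly enough that they visibly match the statement; in particular in case (ii) the identity $\ell=\B\equiv b_0\pmod p$ must be justified from $p^k-1\mid i_0$, and one must confirm $A_k=1$, $b_k=p^k$ feed correctly into the third branch of $g_m$. A secondary subtlety is making sure the "selected $j$" is identified correctly in each regime --- this hinges on the explicit description of $\phi_j$ from Proposition~\ref{break} and the observation that $\phi_j(\ell)+h\equiv i_0+\ell+h$ determines whether $h=p^k$ or $h<p^k$, which in turn pins down $v_p(h)$ and hence $j$. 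Everything else is a direct substitution into Theorem~\ref{perturbation}.
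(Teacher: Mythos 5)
Your overall strategy is the same as the paper's: specialize Theorem~\ref{perturbation} with $n=p^k$, $u=1$, use $i_0=\cdots=i_{k-1}$, $i_k=0$ to make the functions $\phi_j$ explicit, identify $j$, $h$, $t$, $S_j$ in each regime, and reduce the integer coefficient $g_m$ modulo $p$ to get the stated formulas. Cases (i) and (ii) are handled essentially as in the paper (including the key reductions $g_k=1$ and, in (ii), $\ell=\B\equiv b_0$ from $(p^k-1)\ell=p^kA_0-b_0$, which in fact holds mod $p^k$, not just mod $p$).

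However, there is a genuine gap in case (iii). You assert that under the hypothesis ``$\ell>\B$ or $\ell\not\equiv b_0\pmod{p^k}$'' one always has $h<p^k$, and your entire analysis ($j=v_p(h)\leqslant k-1$, hence $\phi_j(\ell)=i_0+\ell$, hence $S_j=\{0\}$) rests on that claim. It is false: the hypothesis is a disjunction, and the sub-case $\ell>\B$ with $\ell\equiv b_0\pmod{p^k}$ is allowed; there $h=p^k$ and the only admissible index is $j=k$ (since $v_p(i_0+\ell)\geqslant k$). Your argument does not cover it: the identity $\phi_j(\ell)=i_0+\ell$ now requires $\ell>\B$ (to know $\phi_k(\ell)=i_0+\ell$ rather than $p^k\ell$), and excluding $m=k$ from $S_k$ likewise requires the strict inequality $p^k\ell>i_0+\ell$, i.e.\ $\ell>\B$; the coefficient computation then uses $\ell\equiv b_0\pmod{p^k}$ rather than $h+\ell\equiv b_0$. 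This sub-case is not dispensable: it is exactly what is invoked in Case 2 of Proposition~\ref{main-thm1} when $\ell>\B$, where (iii) is applied with $h=p^k$ to control $\ctilde_{p^k}$. The paper's proof avoids the trap by setting $j=\min\{v_p(i_0+\ell),k\}$ at the outset and verifying $\phi_j(\ell)=i_0+\ell$ separately in the two scenarios ($\ell>\B$, or $j<k$), after which $S_j=\{0\}$ and $g_0\equiv(-1)^{t+\ell+A_0}b_0$ uniformly. Two smaller blemishes: your opening claim that $\phi_j(\ell)=p^k\ell$ for every $j$ when $\ell<\B$ is wrong for $j\leqslant k-1$ (there $\phi_j(\ell)=i_0+\ell$ always), though harmless since you only use $j=k$ in case (i); and your exclusion of $b_0=p^k$ rests on $p\nmid i_0$ via Lemma~\ref{lemma0}, which assumes $k\geqslant2$ while the proposition does not --- innocuous in the end, since when $b_0=p^k$ the third branch of $g_m$ gives the same coefficient $(-1)^{t+\ell+A_0}b_0$ directly, but the dismissal as stated is not justified.
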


\begin{proof}
By Proposition~\ref{break}(i) we have $i_k=0$ and
$i_0=i_1=\cdots=i_{k-1}$.  Hence $\phi_j(x)=i_0+x$ for
all $0\leqslant j\leqslant k-1$, and
\[\phi_k(x)=\begin{cases}p^kx
&\text{if }0\leqslant x\leqslant\B, \\
i_0+x&\text{if }x\geqslant\B.\end{cases}\]
(i) If $\ell<\B$ then $\phi_k(\ell)=p^k\ell$, and hence
$\min\{v_p(\phi_k(\ell)),k\}=k$.  Therefore we
may apply Theorem~\ref{perturbation} with $n=p^k$,
$j=k$, $h=p^k$, $h_0=1$, $t=\ell+1$, and $S=\{k\}$.
Since $A_k=1$ and $b_k=p^k$ we get
$g_k=(-1)^{\ell+1+\ell+1}p^{k-k}=1$, and hence
\[\ctilde_{p^k}\equiv
c_{p^k}+1\cdot c_{p^k}^{\ell}\cdot c_{p^k}\cdot r^{p^k}
\pmod{\M_K^{\ell+2}}.\]
(ii) If $\ell=\B$ then $\min\{v_p(\phi_k(\ell)),k\}=k$,
so we may apply Theorem~\ref{perturbation} with $n=p^k$,
$j=k$, $h=p^k$, $h_0=1$, $t=\ell+1$, and $S=\{0,k\}$.
Therefore
\begin{alignat*}{2}
g_0&\equiv(-1)^{\ell+1+\ell+A_0}\ell
\equiv(-1)^{A_0+1}b_0&&\pmod{p^k} \\
g_k&=(-1)^{\ell+1+\ell+1}p^{k-k}=1 \\
\ctilde_{p^k}&\equiv
c_{p^k} +(-1)^{A_0+1}b_0\cdot c_{p^k}^{\ell+1-A_0}\cdot
c_{b_0}\cdot r
+1\cdot c_{p^k}^{\ell}\cdot c_{p^k}\cdot r^{p^k}
&&\pmod{\M_K^{\ell+2}}.
\end{alignat*}
(iii) Set $j=\min\{v_p(i_0+\ell),k\}$.  If
$\ell>\B$ then
$\phi_j(\ell)=i_0+\ell$.  If $\ell\not\equiv
b_0\pmod{p^k}$ then $j<k$, so once again we have
$\phi_j(\ell)=i_0+\ell$.  In either case we get
$j=\min\{v_p(\phi_j(\ell)),k\}$.  Therefore the
hypotheses of Theorem~\ref{perturbation} are satisfied
with $n=p^k$, $t=(i_0+\ell+h)/p^k$, and $S=\{0\}$.  It
follows that
\begin{alignat*}{2}
g_0&\equiv(-1)^{t+\ell+A_0}(h+\ell)
\equiv(-1)^{t+\ell+A_0}b_0&&\pmod{p^k} \\
\ctilde_h&\equiv
c_h +(-1)^{t+\ell-A_0}b_0\cdot c_{p^k}^{t-A_0}\cdot
c_{b_0}\cdot r&&\pmod{\M_K^{t+1}}.
\end{alignat*}
\end{proof}

\section{The main theorem}
\label{iteration}

In this section we state our main result.  Let $K$ be a
local field with perfect residue field $\Kbar$ and let
$L/K$ be a totally ramified subextension of $\Omega/K$
of degree $n=p^k$.  Let $\pi_L$ be a uniformizer for $L$
and let
\begin{equation} \label{piLpk}
f(X) =X^{p^k}  -c_1 X^{p^k-1} +\cdots +
(-1)^{p^k-1}c_{p^k-1}X + (-1)^{p^k}c_{p^k}
\end{equation}
be the minimum polynomial of $\pi_L$ over $K$.  Assume
that $L/K$ has only two distinct indices of
inseparability.  Then $i_0=i_1=\cdots=i_{k-1}>i_k=0$ by
Proposition~\ref{break}(i).  If $k\geqslant2$ then by
Lemma~\ref{lemma0} we have $i_{j}=\widetilde{\imath}_j$
for all $0\leqslant j \leqslant k$.  This conclusion
need not hold when $K$ has characteristic $0$
and $k=1$; for instance, if $f(X)=X^p-\pi_K$ then
$\widetilde{\imath}_0=\infty$ and $i_0=pe_K$.  Since
the case $k=1$ was considered in \cite{amano}, we will
assume $k\geqslant 2$.

Under these hypotheses we will show that there is a
uniformizer $\pi_L$ for $L$ whose minimum polynomial
over $K$ has only three nonzero terms.  It is not hard
to see that at least three terms are needed. Indeed, let
$\pi_L$ be a uniformizer for $L$ whose minimum
polynomial is given by \eqref{piLpk}.  By
Lemma~\ref{lemma0} and (\ref{i_jtilde}) we have
\[i_0=\widetilde{\imath}_0 
=\min\{ p^k v_K(c_h) -h:1\leqslant h\leqslant p^k,\; v_p(h) =0\}. \]
Let $1\leqslant b_0\leqslant p^k$ be such that
$b_0\equiv-i_0\pmod{p^k}$.  Then $i_0=p^kA_0-b_0$ with
$v_K(c_{b_0})=A_0=p^{-k}(i_0+b_0)$.  It follows that
$f(X)$ has at least three nonzero
terms, namely $X^{p^k}$, $(-1)^{b_0}c_{b_0}X^{p^k -b_0}$,
and $(-1)^{p^k} c_{p^k}$, and moreover the valuation of
$c_{b_0}$ is independent of the choice of uniformizer
$\pi_L$. To show that three terms suffice, the strategy
is to construct a Cauchy sequence of uniformizers of $L$
that converge to a uniformizer whose Eisenstein
polynomial has three terms.

Let $\omega\in\RR\smallsetminus\{0\}$.  Keeping in mind
that $i_0=p^kA_0-b_0$, we define
$\psi_{i_0,\omega}:\OO_K \to \OO_K$ by
\[\psi_{i_0,\omega}(x) =x^{p^k}-(-1)^{A_0} b_0 \omega x.\]
Then $\psi_{i_0,\omega}$ induces an additive group
homomorphism $\psibar_{i_0,\omega}: \Kbar \to \Kbar$,
given by
\[\psibar_{i_0,\omega}(x)
=x^{p^k}-(-1)^{A_0}\overline{b_0\omega}x.\]
Let $\Tbar_{i_0,\omega}\subset\Kbar$ be a set
of coset representatives for
$\Kbar/\psibar_{i_0,\omega}(\Kbar)$ and let
$T_{i_0,\omega}\subset\RR$ be such that
$\Tbar_{i_0,\omega}=\{x+\M_K:x\in T_{i_0,\omega}\}$.  In
the cases where $\psibar_{i_0,\omega}(\Kbar)=\Kbar$ we
take $T_{i_0,\omega}=\{0\}$.

\begin{definition} \label{standard}
Let $f(X)\in\OO_K[X]$ be an Eisenstein
polynomial of degree $p^k$.  We say that $f(X)$ is in
\emph{standard form} if there exist $1\leqslant A_0\leqslant e_K$,
$1\leqslant b_0\leqslant p^k$ with $p\nmid b_0$, and $\omega \in \mathfrak{R} \smallsetminus \{0\}$ such that
\begin{equation} \label{standardf}
f(X)=X^{p^k}+(-1)^{b_0}\omega\pi_K^{A_0}X^{p^k-b_0}
+(-1)^{p^k}a\pi_K
\end{equation}
for some $a\in\OO_K$ which can be expressed as
follows.  Set $i_0=p^kA_0-b_0$ and
$\B=\displaystyle\frac{i_0}{p^k-1}$.  Then
\begin{enumerate}[(i)]
\item If $\B \not \in \mathbb{Z}$ there
are $\alpha_j \in \RR$ with $\alpha_j =0$ for
all $j \equiv b_0 \pmod{p^k}$ such that
\[a=1+\sum_{j=1}^{\lfloor\B\rfloor}\alpha_j
\pi_K^{j}.\]
\item If $\B\in \mathbb{Z}$ there are
$\gamma\in T_{i_0,\omega}$ and $\alpha_j \in \RR$ with
$\alpha_j =0$ for all $j \equiv b_0 \pmod{p^k}$ such
that
\[a=1+\left(\sum_{j=1}^{\B-1}\alpha_j
\pi_K^{j}\right) +\gamma \pi_K^{\B}.\]
\end{enumerate}
Let $\ell\geqslant1$.  We say that $f(X)$ is in
\emph{$\ell$-standard form} if there is a degree-$p^k$ Eisenstein
polynomial $\fhat(X)\in\OO_K[X]$
in standard form such that $f\sim_{\ell}\fhat$ (see Definition~\ref{l-equiv}).
\end{definition}

\begin{remark} \label{exclude}
Let $f(X)\in\OO_K[X]$ be a polynomial of the form
(\ref{standardf}), with
$\omega\in\RR\smallsetminus\{0\}$.  Let $\pi_L\in\Omega$
be a root of $f(X)$ and set $L=K(\pi_L)$.  If
$p\nmid b_0$ and $A_0\leqslant e_K$ then it follows from
(\ref{i_j}) that the indices of inseparability of $L/K$
are given by
$i_k=0$ and $i_j=p^kA_0-b_0$ for $0\leqslant j\leqslant k-1$.  On
the other hand, if $p\nmid b_0$ and $A_0>e_K$ then
$i_{k-1}=p^ke_K$ and $i_0=p^kA_0-b_0$, while if
char$(K)=0$ and $p\mid b_0$ then $i_0=i_1+p^ke_K$.  In
both of these cases we have $0=i_k<i_{k-1}<i_0$, so we
have at least three distinct indices of inseparability.
Since we are interested in extensions with just two
indices of inseparability, we exclude these cases in the
definition above.
\end{remark}

\begin{remark}
Suppose that $\Kbar$ is finite.  If
$\psibar_{i_0,\omega}$ has no
nonzero roots in $\Kbar$ then
$\psibar_{i_0,\omega}(\Kbar)=\Kbar$.  On the
other hand, if $\psibar_{i_0,\omega}$ has
a nonzero root $\overline{z}\in\Kbar$ then
$\overline{(-1)^{A_0}b_0 \omega}=\overline{z}^{p^k-1}$, so
$\psibar_{i_0,\omega}(x)$ can be rewritten in terms of
$\overline{\kappa}(x)=x^{p^k}-x$ as
\begin{equation}
\label{eq-psi}
 \psibar_{i_0,\omega}(x)=
\overline{z}^{p^k}((x/\overline{z})^{p^k}
-(x/\overline{z}))
= \overline{z}^{p^k}\overline{\kappa}(x/\overline{z}).
 \end{equation}
Hence $\psibar_{i_0,\omega}(\Kbar)
=\overline{z}^{p^k}\overline{\kappa}(\Kbar)$.  Choose a
set $\overline{U}$ of coset representatives for
$\Kbar/\overline{\kappa}(\Kbar)$.  Then
$\overline{z}^{p^k}\overline{U}$ is a set of coset
representatives for
$\Kbar/\psibar_{i_0,\omega}(\Kbar)$.  Thus we
only need to choose a single set of coset
representatives in the case where $\Kbar$ is finite,
although we do need to pick a nonzero root of
$\psibar_{i_0,\omega}$ for each pair
$(i_0,\omega)$ such that
$\psibar_{i_0,\omega}(\Kbar)\not=\Kbar$.
\end{remark}

We now state our main result.

\begin{theorem} \label{main}
Let $K$ be a local field with residue characteristic $p$
and let $k\geqslant 2$.  Then there is a one-to-one
correspondence between the set of degree-$p^k$
Eisenstein polynomials $f(X)\in\OO_K[X]$ in standard
form, and the set of $K$-isomorphism classes of totally
ramified separable extensions $L/K$ of degree $p^k$
which have two distinct indices of inseparability.  If
$f(X)$ is in standard form and $\pi\in\Omega$ is a root
of $f(X)$ then $K(\pi)/K$ is an extension corresponding
to $f(X)$.
\end{theorem}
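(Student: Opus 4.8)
The asserted correspondence sends a polynomial $f$ in standard form to the $K$-isomorphism class of $K(\pi)$, where $\pi\in\Omega$ is any root of $f$; with this understood, the last sentence of the theorem is automatic, and what must be checked is that this rule is well defined, surjective, and injective. Well-definedness follows at once from Remark~\ref{exclude}: a standard-form $f$ has $p\nmid b_0$ and $1\leqslant A_0\leqslant e_K$, so $K(\pi)/K$ is separable, totally ramified of degree $p^k$, and has indices of inseparability $i_k=0$ and $i_0=i_1=\cdots=i_{k-1}=p^kA_0-b_0$ — exactly two distinct values. The remaining two points are the substance of the theorem and are proved by the iterative perturbation method sketched just above: surjectivity in Section~\ref{existence}, injectivity in Section~\ref{uniqueness}. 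The hypothesis $k\geqslant2$ enters only through Lemma~\ref{lemma0} (and hence Lemma~\ref{i0bound}), which is precisely the ingredient that can fail when $\operatorname{char}(K)=0$ and $k=1$.

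For surjectivity I would start from an arbitrary uniformizer $\pi_L$ of a given $L/K$ with two distinct indices of inseparability, with minimum polynomial \eqref{piLpk}. Lemma~\ref{lemma0} together with \eqref{i_jtilde} and Proposition~\ref{break} gives $v_K(c_{p^k})=1$, $v_K(c_{b_0})=A_0$, and $v_K(c_h)\geqslant\lceil(i_0+h)/p^k\rceil$ for all $h\neq p^k$. First I rescale $\pi_L$ by the Teichm\"uller unit $\zeta$ with $\bar\zeta^{p^k}=(\overline{c_{p^k}/\pi_K})^{-1}$ (possible since $x\mapsto x^{p^k}$ is a bijection of the perfect field $\Kbar$), so that the leading digit of $c_{p^k}/\pi_K$ is $1$; this uses up the scaling freedom and fixes the value $\omega\in\RR$ equal to the leading digit of $c_{b_0}/\pi_K^{A_0}$. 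I then build uniformizers $\pi^{(0)}=\pi_L,\pi^{(1)},\pi^{(2)},\dots$ with $\pi^{(\ell)}=\pi^{(\ell-1)}+r_\ell\,(\pi^{(\ell-1)})^{\ell+1}$, $r_\ell\in\RR$, chosen so that the minimum polynomial of $\pi^{(\ell)}$ is in $\ell$-standard form, the choice being dictated by Proposition~\ref{simple}: for $\ell<\B$, part~(i) controls the $\pi_K^{\ell+1}$-digit of $c_{p^k}$, and part~(iii) (when $\ell\not\equiv b_0\pmod{p^k}$) simultaneously moves a digit of some $c_h$ with $h\neq p^k$ to $0$; for $\ell=\B$ (which forces $\B\in\Z$), part~(ii) changes the $\pi_K^{\B+1}$-digit of $c_{p^k}$ by $\psibar_{i_0,\omega}(\bar r_\ell)$, and $r_\ell$ is chosen to place that digit in $\Tbar_{i_0,\omega}$; for $\ell>\B$, part~(iii) allows every coefficient $c_h$ with $h\notin\{b_0,p^k\}$ and every digit of $c_{b_0}$ and of $c_{p^k}$ lying beyond the ranges permitted by Definition~\ref{standard} to be cleared one digit at a time. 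Since $v_L(\pi^{(\ell)}-\pi^{(\ell-1)})\geqslant\ell+1\to\infty$, the $\pi^{(\ell)}$ converge to a uniformizer $\pi^{(\infty)}$ of $L$, and the corresponding minimum polynomials — symmetric functions of the conjugates — converge to that of $\pi^{(\infty)}$, which is therefore in standard form.

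For injectivity I would take standard-form $f,g$ together with a $K$-isomorphism of their root fields; transporting a root of $g$, I may assume $\pi_f,\pi_g$ are uniformizers of one extension $L/K$ with minimum polynomials $f,g$. Since the indices of inseparability are invariants of $L/K$, $f$ and $g$ share $i_0$, hence $A_0$, $b_0$, $\B$; applying Theorem~\ref{stability} to $\pi_g$ and a Teichm\"uller rescaling of $\pi_f$ and comparing leading digits of the constant terms forces the rescaling to be trivial, and then a further comparison forces $f$ and $g$ to have the same $\omega$, so the data $\psibar_{i_0,\omega}$ and $\Tbar_{i_0,\omega}$ coincide for both. I then prove $\pi_f=\pi_g$ by an induction dual to the construction above: if $\pi_g\equiv\pi_f\pmod{\M_L^{\ell+1}}$, write $\pi_g\equiv\pi_f+r\,\pi_f^{\ell+1}\pmod{\M_L^{\ell+2}}$ with $r\in\RR$ and invoke Proposition~\ref{simple}; for $\ell\neq\B$ the digit of the coefficient that $r$ would disturb is prescribed by standard form for both $f$ and $g$, so $r=0$ and the induction advances. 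At $\ell=\B$ one only gets $\bar r\in\ker\psibar_{i_0,\omega}$, because the two $\pi_K^{\B}$-digits of $a$ lie in the transversal $\Tbar_{i_0,\omega}$ and differ by $\psibar_{i_0,\omega}(\bar r)$, hence are equal; to finish one needs the separate fact that perturbing a uniformizer whose minimum polynomial is in standard form by $z\,\pi^{\B+1}$ with $\bar z\in\ker\psibar_{i_0,\omega}$ yields another uniformizer with the same minimum polynomial (so one may replace $\pi_f$ by $\pi_f+r\,\pi_f^{\B+1}$, still a root of $f$, and resume the induction, all further levels being governed by part~(iii)).

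The main obstacle is twofold. The first difficulty is the bookkeeping in the surjectivity construction: a single $r_\ell$ generally affects several coefficients at once, so one must verify level by level that the digit being corrected is the genuinely next uncontrolled one and that no previously arranged digit is disturbed — this rests on the monotonicity of the functions $\rho_h$ and on the exact shape of the constants $g_m$ in Theorem~\ref{perturbation}. The sharper difficulty, present in both halves, is the behaviour at $\ell=\B$: there the perturbation acts on the $\pi_K^{\B}$-digit of $a$ through the merely additive, generally non-injective map $\psibar_{i_0,\omega}$, so that digit can only be normalized into a transversal of $\psibar_{i_0,\omega}(\Kbar)$ and not eliminated — which is exactly why Definition~\ref{standard}(ii) keeps the parameter $\gamma\in T_{i_0,\omega}$, and why the theorem asserts a bijection rather than a reduction of each $L$ to a single three-term polynomial.
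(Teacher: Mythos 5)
Your existence half follows the paper's own route (Lemma~\ref{lemma1} for the base case, Proposition~\ref{main-thm1} for the inductive step, then the Cauchy-limit argument via Theorem~\ref{stability}), and your well-definedness remark via Remark~\ref{exclude} is exactly right. The one inaccuracy there concerns the bookkeeping you yourself flag as the main obstacle: in the step $\ell<\B$, $\ell\not\equiv b_0\pmod{p^k}$ you have a single $r_\ell$ both controlling the $\pi_K^{\ell+1}$-digit of $c_{p^k}$ (via Proposition~\ref{simple}(i)) and clearing the digit of $c_{h_1}$ (via (iii)); one $r_\ell$ cannot do both. The resolution, as in Case~1 of Proposition~\ref{main-thm1}, is that $r_\ell$ is used only to clear the $c_{h_1}$-digit, and whatever digit then appears in $c_{p^k}$ is absorbed into $a$ --- legitimate precisely because Definition~\ref{standard} leaves $\alpha_\ell$ unconstrained when $\ell\not\equiv b_0\pmod{p^k}$; the digits of $c_{p^k}$ in positions $\ell\equiv b_0\pmod{p^k}$ are normalized at those steps, where no coefficient $c_h$ with $h<p^k$ interferes.

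Your uniqueness half genuinely departs from the paper, which never compares two uniformizers digit by digit: there $\omega=\omegahat$ is Proposition~\ref{lemma4.6} (via the norm estimates of Lemmas~\ref{lemmaA}--\ref{lemmaC}), and $a=\ahat$ is forced by analyzing $\theta=\pihat_L/\pi_L-1$ (Lemmas~\ref{lemmaD}, \ref{lemmaE}, \ref{lemmaZ}) and one residue computation. Your digit induction via Proposition~\ref{simple} does work at every level $\ell\neq\B$ (and your ``further comparison'' giving $\omega=\omegahat$ can be made precise by applying Theorem~\ref{stability} with $\ell=1$ once the unit $\pi_g/\pi_f$ is known to be a $1$-unit). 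The genuine gap is the ``separate fact'' you invoke at $\ell=\B$: it is false as stated. For $\bar r\in\ker\psibar_{i_0,\omega}$ the element $\pi_f+r\pi_f^{\B+1}$ is in general \emph{not} a root of $f$; at best there is a root of $f$ in $L$ congruent to it modulo $\M_L^{\B+2}$, and even that weaker statement (that every element of $\ker\psibar_{i_0,\omega}\cap\Kbar$ is realized by a root of $f$ lying in $L$) needs its own proof --- e.g.\ that in a single-break extension the differences of conjugates of $\pi_f$ all have $L$-valuation exactly $\B+1$, that their leading residues are precisely the nonzero roots of $\psibar_{i_0,\omega}$, plus Krasner's lemma --- none of which you supply. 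Fortunately the continuation past $\ell=\B$ is unnecessary: at $\ell=\B$ your own computation already gives $\gamma=\gammahat$, while Theorem~\ref{stability} at level $\B$ gives $\alpha_j=\alphahat_j$ for $j<\B$, and standard form allows no digits of $a$ beyond $\B$; hence $a=\ahat$ and $f=g$, so the induction can simply terminate there (and when $\B\notin\Z$ it never meets $\B$ and yields $f=g$ outright). With the false step deleted and the induction stopped at $\ell=\B$, your uniqueness argument is complete and arguably more elementary than the paper's.
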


\section{Proof of the theorem: Existence}
\label{existence}

In this section we show that every totally ramified
separable extension $L/K$ of degree $p^k$ which has two
distinct indices of inseparability is generated by a
root of an Eisenstein polynomial $f(X)\in\OO_K[X]$ in
standard form.  In the next section we show that $f(X)$
is uniquely determined by $L/K$, thus completing the
proof of Theorem~\ref{main}.

Let $\pi_K$ be a fixed uniformizer of $K$ and choose an
arbitrary uniformizer $\pi_L$ for $L$, with minimum
polynomial $f(X)$ given by \eqref{piLpk}.  Let
$\beta\in\RR$ be such that
$c_{p^k}\equiv\beta\pi_K\pmod{\M_K^2}$.  Since $\Kbar$
is perfect, there exists $\xi \in \RR$  such that
$\xi^{p^k}=\beta$.  Then $\xi^{-1}\pi_L$ is a root of
\begin{equation}
\label{pi_L'}
\fhat(X) =X^{p^k}  -\chat_1 X^{p^k-1} +\cdots +
(-1)^{p^k-1}\chat_{p^k-1}X + (-1)^{p^k}\chat_{p^k},
\end{equation}
where $\chat_j =\xi^{-j}c_j$.  In particular, we have
$\chat_{p^k}=\beta^{-1}c_{p^k}\equiv\pi_K\pmod{\M_K^2}$.
Hence we may assume that the constant term in
\eqref{piLpk} satisfies
$c_{p^k}\equiv\pi_K \pmod{\M_K^2}$.
Since $v_K(c_{b_0})=A_0$ there is
$\omega\in\RR\smallsetminus\{0\}$ such that
$c_{b_0}\equiv\omega\pi_K^{A_0}\pmod{\M_K^{A_0+1}}$.  We
will show in Proposition~\ref{lemma4.6} that, once
$\pi_K$ has been fixed, $\omega$ is an invariant of the
extension $L/K$.

To show that every
isomorphism class of totally ramified extensions of
degree $p^k$ with two indices of inseparability comes
from a polynomial in standard form we use induction on
$\ell$ to show that every such extension comes from a
polynomial in $\ell$-standard form.  The base case is
given by the following lemma:

\begin{lemma}
\label{lemma1}
Let $\pi_L$ be a uniformizer of $L$ whose minimum
polynomial \eqref{piLpk} satisfies
$c_{p^k}\equiv\pi_K\pmod{\M_K^2}$.
Let $\omega\in\RR$ be such that
$c_{b_0}\equiv\omega\pi_K^{A_0}\pmod{\M_K^{A_0+1}}$, and
set
\[g(X)=X^{p^k}+(-1)^{b_0}\omega\pi_K^{A_0}X^{p^k-b_0}
+(-1)^{p^k}\pi_K.\]
Then $f\sim_1g$ (see Definition~\ref{l-equiv}). 
\end{lemma}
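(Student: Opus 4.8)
The plan is to show $f\sim_1 g$ by verifying the inequality $v_K(\hat{c}_h - c_h)\geqslant\rho_h(1)$ for each $1\leqslant h\leqslant p^k$, where $\hat{c}_h$ denotes the coefficients of $g(X)$ (so $\hat{c}_h=0$ except for $h=b_0$, where $\hat{c}_{b_0}=\omega\pi_K^{A_0}$, and $h=p^k$, where $\hat{c}_{p^k}=\pi_K$). Since both $f$ and $g$ are Eisenstein of degree $p^k$, this is ultimately a collection of valuation estimates on the $c_h$. First I would compute $\rho_h(1)$ from the formulas displayed just before Lemma~\ref{lemma0}: for $1\leqslant h\leqslant p^k-1$ we have $\rho_h(1)=\lceil(i_0+1+h)/p^k\rceil$, and $\rho_{p^k}(1)=2$ (using $\ell=1\leqslant\B$, which holds since $\B=i_0/(p^k-1)\geqslant 1$).

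The key input is that $i_0=\widetilde{\imath}_0=\min\{p^k v_K(c_h)-h: 1\leqslant h\leqslant p^k,\ v_p(h)=0\}$, which holds by Lemma~\ref{lemma0} and \eqref{i_jtilde} together with the fact that $\widetilde{\imath}_0=\min_j(\widetilde{\imath}_j+\text{nonneg})$ forces the minimum over all $h$ with $v_p(h)=0$. From $i_0\leqslant p^k v_K(c_h)-h$ we get $v_K(c_h)\geqslant(i_0+h)/p^k$, hence $v_K(c_h)\geqslant\lceil(i_0+h)/p^k\rceil\geqslant\rho_h(1)$ whenever $\lceil(i_0+h)/p^k\rceil=\lceil(i_0+1+h)/p^k\rceil$, i.e.\ unless $p^k\mid i_0+h$. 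The one exceptional case is $h=b_0$ (the unique $h$ in range with $p^k\mid i_0+h$, since $b_0\equiv-i_0$), where $v_K(c_{b_0})=A_0=(i_0+b_0)/p^k$ but $\rho_{b_0}(1)=A_0+1$; there we must use instead the hypothesis $c_{b_0}\equiv\omega\pi_K^{A_0}\pmod{\M_K^{A_0+1}}$, which gives exactly $v_K(c_{b_0}-\hat{c}_{b_0})\geqslant A_0+1=\rho_{b_0}(1)$. For indices $h$ with $v_p(h)\geqslant 1$ one cannot use $i_0$ directly; instead I would invoke the bound coming from $\widetilde{\imath}_{v_p(h)}=i_{v_p(h)}=i_0$ (again by Lemma~\ref{lemma0}), which still yields $p^kv_K(c_h)-h\geqslant i_0$, so the same estimate applies.

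It remains to handle $h=p^k$: we need $v_K(c_{p^k}-\pi_K)\geqslant 2$, which is precisely the normalization hypothesis $c_{p^k}\equiv\pi_K\pmod{\M_K^2}$ assumed in the statement. Assembling these cases covers all $1\leqslant h\leqslant p^k$, establishing $f\sim_1 g$. The main obstacle, and the place requiring care, is the bookkeeping around the ceiling functions — specifically confirming that $b_0$ is the \emph{only} residue obstruction in the range $1\leqslant h\leqslant p^k-1$ and that $\rho_{b_0}(1)=A_0+1$ exactly, so that the single term $\omega\pi_K^{A_0}X^{p^k-b_0}$ in $g$ is both necessary and sufficient to absorb the discrepancy; everything else is a direct comparison of valuations against $\rho_h(1)$.
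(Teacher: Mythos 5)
Your argument is correct and follows essentially the same route as the paper: compute $\rho_h(1)$ explicitly, use $i_0=\widetilde{\imath}_{v_p(h)}$ (Lemma~\ref{lemma0} and Proposition~\ref{break}) to get $v_K(c_h)\geqslant\rho_h(1)$ for all $h\neq b_0,p^k$, note that $h=b_0$ is the only index in $\{1,\dots,p^k-1\}$ where the ceiling jumps, and absorb the two exceptional indices $h=b_0$, $h=p^k$ with the congruence hypotheses on $c_{b_0}$ and $c_{p^k}$. One small repair: your justification $1\leqslant\B$ via ``$\B=i_0/(p^k-1)\geqslant1$'' is not automatic (one can have $i_0$ as small as $1$, so $\B<1$), but the conclusion $\rho_{p^k}(1)=2$ still holds, since when $1>\B$ the second branch of \eqref{rhopk} gives $\rho_{p^k}(1)=\left\lceil\frac{i_0+1}{p^k}\right\rceil+1=2$ because $0<i_0<p^k-1$.
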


\begin{proof}
Suppose first that $1\leqslant h < b_0$.  Then
$v_p(h)<k$, so
\[ \rho_h(1) =\left \lceil \frac{i_0 +1 +h}{p^k}
\right \rceil  = \left \lceil \frac{p^kA_0-b_0 +h +1}{p^k}
\right \rceil =A_0.\]
By Lemma~\ref{lemma0} and Proposition~\ref{break}(i) we
have
\[\widetilde{\imath}_{k-1}=i_{k-1}=i_0=p^kA_0-b_0.\]
Hence by (\ref{i_jtilde}) we get
$v_K(c_h) \geqslant v_K(c_{b_0})=A_0=\rho_h(1)$.
On the other hand, if $b_0<h<p^k$ then $v_p(h)<k$, and
hence
\[ \rho_h(1)= \left \lceil \frac{i_0 +1 +h}{p^k}
\right \rceil  =\left \lceil \frac{p^k A_0 -b_0 +1+h}{p^k}
\right \rceil  = A_0+1. \]
Therefore $v_K(c_h) \geqslant A_0 +1 = \rho_h(1)$.
Since $c_{b_0} \equiv \omega
\pi_K^{A_0}\pmod{\M_K^{A_0+1}}$ we have
$v_K(c_{b_0} -\omega \pi_K^{A_0}) \geqslant A_0 +1$.  We
also have
\[ \rho_{b_0}(1)=\left \lceil \frac{i_0 +1 +b_0}{p^k}\right \rceil =\left \lceil \frac{p^k A_0 -b_0 +1+b_0}{p^k} \right \rceil =A_0+1.  \]
Hence $v_K(c_{b_0} -\omega \pi_K^{A_0})\geqslant
\rho_{b_0}(1)$.  Since
$c_{p^k}\equiv\pi_K\pmod{\M_K^2}$ we have
$v_K(c_{p^k}-\pi_K)\geqslant2$, and by (\ref{rhopk}) we
have $\rho_{p^k}(1)=2$.  Hence
$v_K(c_{p^k}-\pi_K)\geqslant\rho_{p^k}(1)$.
\end{proof}

We now use Proposition~\ref{simple} to prove the
inductive step:

\begin{proposition} \label{main-thm1}
Let $\ell \geqslant 1$ and let $\pi_L$ be a uniformizer
for $L$ whose minimum polynomial $f(X)$ is in
$\ell$-standard form.  Then there exists
$r\in\RR$ such that the minimum polynomial
$\ftilde(X)$ of $\pitilde_L
=\pi_L +r\pi_{L}^{\ell+1}$ is in $(\ell+1)$-standard
form.
\end{proposition}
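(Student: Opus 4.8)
The plan is to produce $r\in\RR$ together with a degree-$p^k$ standard-form polynomial $\widehat{\ftilde}(X)$ such that $\ftilde\sim_{\ell+1}\widehat{\ftilde}$, where $\ftilde$ denotes the minimum polynomial of $\pitilde_L=\pi_L+r\pi_L^{\ell+1}$. Fix a standard-form $\fhat(X)$ with $f\sim_\ell\fhat$ and recall from Definition~\ref{standard} its shape: its only nonzero terms are $X^{p^k}$, $(-1)^{b_0}\omega\pi_K^{A_0}X^{p^k-b_0}$ and $(-1)^{p^k}\ahat\pi_K$, where $\ahat=1+\sum_{j=1}^{\lfloor\B\rfloor}\alpha_j\pi_K^j$ and each $\alpha_j\in\RR$ is a free parameter subject only to $\alpha_j=0$ when $j\equiv b_0\pmod{p^k}$---except that when $\B\in\Z$ the top digit $\alpha_\B$ is instead allowed to be any element of $T_{i_0,\omega}$. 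For any $r\in\RR$ we have $\pitilde_L\equiv\pi_L\pmod{\M_L^{\ell+1}}$, so Theorem~\ref{stability} gives $f\sim_\ell\ftilde$, and combining this with $f\sim_\ell\fhat$ via the ultrametric inequality yields $\ftilde\sim_\ell\fhat$. Hence it suffices to choose $r$ and the parameters of $\widehat{\ftilde}$ so that $v_K(\ctilde_h-\widehat{\ctilde}_h)\geqslant\rho_h(\ell+1)$ for the finitely many $h$ with $\rho_h(\ell+1)>\rho_h(\ell)$; using $\rho_h(\ell)=\lceil(i_0+\ell+h)/p^k\rceil$ for $1\leqslant h\leqslant p^k-1$ and (\ref{rhopk}) for $h=p^k$, these are precisely the unique $h_\ell\in\{1,\dots,p^k\}$ with $p^k\mid i_0+\ell+h_\ell$, together with $h=p^k$ in the case $\ell<\B$.

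I would then argue by cases. If $\ell>\B$, then only $c_{h_\ell}$ is newly constrained, and Proposition~\ref{simple}(iii) gives $\ctilde_{h_\ell}\equiv c_{h_\ell}\pmod{\M_K^{t}}$ with $t=\rho_{h_\ell}(\ell)$ and with the $\pi_K^t$-digit of $\ctilde_{h_\ell}-c_{h_\ell}$ equal to $\pm\overline{b_0\omega}\cdot\overline r$; since $p\nmid b_0$ and $\omega\neq0$ the factor $\overline{b_0\omega}$ is nonzero in $\Kbar$, so a unique $\overline r\in\Kbar$ makes the $\pi_K^t$-digit of $\ctilde_{h_\ell}$ equal that of $\chat_{h_\ell}$, and $\widehat{\ftilde}=\fhat$ works. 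If $\ell=\B$, then $\B\in\Z$, so $(p^k-1)\mid i_0$ and, since $\B(p^k-1)=p^kA_0-b_0$, also $\B\equiv b_0\pmod{p^k}$; thus $h_\ell=p^k$ and only $c_{p^k}$ is newly constrained, and Proposition~\ref{simple}(ii) shows the $\pi_K^{\ell+1}$-digit of $\ctilde_{p^k}$ is the corresponding digit of $c_{p^k}$ plus $\psibar_{i_0,\omega}(\overline r)$; I would choose $\overline r$ so that this digit becomes the representative $\gamma'\in T_{i_0,\omega}$ of the coset of $\psibar_{i_0,\omega}(\Kbar)$ containing the $\pi_K^{\ell+1}$-digit of $c_{p^k}$, and let $\widehat{\ftilde}$ be $\fhat$ with its top digit replaced by $\gamma'$. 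If $\ell<\B$ and $\ell\equiv b_0\pmod{p^k}$, then once more $h_\ell=p^k$ and only $c_{p^k}$ is newly constrained, with its standard-form $\pi_K^{\ell+1}$-digit forced to $0$; Proposition~\ref{simple}(i) gives the $\pi_K^{\ell+1}$-digit of $\ctilde_{p^k}$ as the corresponding digit of $c_{p^k}$ plus $\overline r^{p^k}$, which is annihilated by a suitable $\overline r$ since $\Kbar$ is perfect, and $\widehat{\ftilde}=\fhat$ works.

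The remaining case, $\ell<\B$ with $\ell\not\equiv b_0\pmod{p^k}$, is the crux and the step I expect to be the main obstacle: here $h_\ell\neq p^k$, so \emph{both} $c_{h_\ell}$ and $c_{p^k}$ are newly constrained while there is only the single parameter $r$. The resolution is that the new constraint on $c_{p^k}$ concerns only its $\pi_K^{\ell+1}$-digit, which is precisely the standard-form digit $\alpha_\ell$ of $\widehat{\ftilde}$, and $\alpha_\ell$ is free because $\ell\not\equiv b_0\pmod{p^k}$ and $\ell<\B$. So I would first use Proposition~\ref{simple}(iii) to fix $\overline r$ uniquely so that $\ctilde_{h_\ell}$ agrees with $\chat_{h_\ell}$ at level $\rho_{h_\ell}(\ell+1)$---the computation being the same whether $h_\ell=b_0$ or $h_\ell\not\in\{b_0,p^k\}$---then use Proposition~\ref{simple}(i) with this $r$ to read off the $\pi_K^{\ell+1}$-digit of $\ctilde_{p^k}$, and finally take $\widehat{\ftilde}$ to be $\fhat$ with $\alpha_\ell$ set equal to that digit.

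Finally, in each case one checks $\ftilde\sim_{\ell+1}\widehat{\ftilde}$ directly: the coefficients with $\rho_h(\ell+1)=\rho_h(\ell)$ are handled by $\ftilde\sim_\ell\fhat$ (since $\widehat{\ctilde}_h=\chat_h$ for these $h$), and $h_\ell$ and $p^k$ by the digit computations above, using that if $x\equiv y\pmod{\M_K^m}$ then the $\pi_K^m$-digit of $x-y$ is the difference in $\Kbar$ of the $\pi_K^m$-digits of $x$ and $y$. Two facts are used throughout and should be recorded at the outset: from $f\sim_\ell\fhat$ together with $\rho_{p^k}(\ell)\geqslant2$ and $\rho_{b_0}(\ell)\geqslant A_0+1$ one gets $c_{p^k}\equiv\pi_K\pmod{\M_K^2}$ and $c_{b_0}\equiv\omega\pi_K^{A_0}\pmod{\M_K^{A_0+1}}$, and these are what make the leading Teichm\"uller digits of the products appearing in Proposition~\ref{simple} come out as stated.
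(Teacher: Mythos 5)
Your proposal is correct and follows essentially the same route as the paper: Theorem~\ref{stability} reduces the problem to the coefficients whose bound $\rho_h$ increases, Proposition~\ref{simple}(i)--(iii) is used to choose $r$ in each case, and in the delicate case $\ell<\B$, $\ell\not\equiv b_0\pmod{p^k}$ the extra constraint on $c_{p^k}$ is absorbed into the free digit $\alpha_\ell$ of the standard form, exactly as in the paper's choice $\atilde=a+\alpha_\ell\pi_K^{\ell}$. The only differences are organizational (your case split by $\ell$ versus $\B$ instead of by the congruence class of $\ell$, and matching digits against $\fhat$ rather than against the explicit monomials), which do not change the substance of the argument.
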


\begin{proof}
We begin by making some numerical observations.  Let
$x\ge0$, let $1\leqslant h\leqslant p^k-1$, and set
$j=v_p(h)$.  Then $v_p(h)<k$, so
\begin{equation} \label{phijx}
\frac{\phi_j(x)+h}{p^k}=\frac{i_0+x+h}{p^k}
=\frac{p^kA_0-b_0+x+h}{p^k}.
\end{equation}
Hence by (\ref{rhoh}) we get
\begin{equation} \label{ellplus}
\rho_h(\ell+1)=\begin{cases}
\rho_h(\ell)&\text{ if }\ell\not\equiv b_0-h\pmod{p^k} \\
\rho_h(\ell)+1&\text{ if }\ell\equiv b_0-h\pmod{p^k}.
\end{cases}
\end{equation}
In addition, note that if $\ell=\B\in\Z$ then
$(p^k-1)\ell=i_0=p^kA_0-b_0$, and hence
$\ell\equiv b_0\pmod{p^k}$.

Let
\begin{equation} \label{ftpk}
\ftilde(X) =X^{p^k}  -\ctilde_1 X^{p^k-1} +\cdots +
(-1)^{p^k-1}\ctilde_{p^k-1}X +
(-1)^{p^k}\ctilde_{p^k}
\end{equation}
be the minimum polynomial for
$\pitilde_L=\pi_L +r\pi_{L}^{\ell+1}$ over $K$.  We
wish to choose $r\in\RR$ so that $\ftilde(X)$ is in
$(\ell+1)$-standard form.  We divide the proof into two
cases, based on the congruence class of $\ell$ modulo
$p^k$.

\smallskip \noindent
\textbf{Case 1:}
$\boldsymbol{\ell\not\equiv b_0\pmod{p^k}}$\textbf{.}
In this case there is $1\leqslant h_1 \leqslant p^k-1$
such that $p^k\mid-b_0+\ell+h_1$.  Hence by
(\ref{ellplus}) we have $\rho_{h_1}(\ell+1)
=\rho_{h_1}(\ell)+1$ and $\rho_h(\ell+1)=\rho_h(\ell)$
for all $1\leqslant h\leqslant p^k-1$ such that
$h\not=h_1$.  Set $t=\rho_{h_1}(\ell)$.
Then by Proposition~\ref{simple}(iii) we have
\[\ctilde_{h_1}\equiv c_{h_1}
+(-1)^{t+\ell+A_0}h_1c_{p^k}^{t-A_0}c_{h_1}r
\pmod{\M_K^{t+1}}.\]
Since $c_{p^k}\equiv\pi_K\pmod{\M_K^2}$ and
$c_{h_1}\equiv\omega\pi_K^{A_0}
\pmod{\M_K^{A_0+1}}$ we get
\begin{equation} \label{eq-b0}
\ctilde_{h_1}
\equiv c_{h_1}+(-1)^{t+\ell+A_0}h_1\omega\pi_K^{t}r
\pmod{\M_K^{t+1}}.
\end{equation}

If $h_1\not=b_0$ (equivalently, $p^k\nmid\ell$) then
$v_K(c_{h_1}) \geqslant \rho_{h_1}(\ell)=t$.  Hence
there is $\alpha\in\RR$ such that
$c_{h_1}\equiv\alpha\pi_K^{t}\pmod{\M_K^{t+1}}$.  Choose
$r\in\RR$ to satisfy
\begin{equation} \label{r}
r\equiv(-1)^{t+\ell +A_0+1}\alpha(b_0\omega)^{-1}
\pmod{\M_K}.
\end{equation}
Then by (\ref{eq-b0}) we get
\[v_K(\ctilde_{h_1})\geqslant t+1=
\rho_{h_1}(\ell)+1=\rho_{h_1}(\ell+1).\]
Suppose $h_1=b_0$.  Then by (\ref{eq-b0}) we get
\[ \ctilde_{b_0} -\omega \pi_K^{A_0} \equiv
c_{b_0} -\omega \pi_K^{A_0}
+(-1)^{t+\ell+A_0}b_0\omega\pi_K^{t}r \pmod{\M_K^{t+1}}. \] 
Since $c_{b_0} -\omega \pi_K^{A_0} \in
\M_K^{t}$ there is $\alpha\in\RR$
such that $c_{b_0}-\omega\pi_K^{A_0}\equiv\alpha\pi_K^t
\pmod{\M_K^{t+1}}$.  By choosing $r\in\RR$ to satisfy
(\ref{r}) we get
\[v_K(\ctilde_{b_0}-\omega\pi_K^{A_0})\geqslant
\rho_{b_0}(\ell)+1=\rho_{b_0}(\ell+1).\] 

Let $1\leqslant h\leqslant p^k$ with $h \neq h_1$.
By Theorem~\ref{stability} we have
$v_K(\ctilde_{h}-c_{h}) \geqslant \rho_{h}(\ell)$.
If $h\not=b_0$ and $h\not=p^k$ then
$v_K(c_h)\geqslant\rho_{h}(\ell)$, and hence
$v_K(\ctilde_{h})\geqslant\rho_{h}(\ell)
=\rho_{h}(\ell+1)$.  In the case $h=b_0$ we have
$v_K(c_{b_0}-\omega\pi_K^{A_0})\geqslant
\rho_{b_0}(\ell)$, and hence
\[v_K(\ctilde_{b_0} - \omega \pi_K^{A_0})\geqslant
\rho_{b_0}(\ell)=\rho_{b_0}(\ell +1).\]
Finally, when $h=p^k$ we have $v_K(c_{p^k}-\pi_K a)\geqslant
\rho_{p^k}(\ell)$ for some $a$ satisfying conditions (i)
and (ii) of Definition~\ref{standard}.  It follows that
$v_K(\ctilde_{p^k}-\pi_K a)\geqslant \rho_{p^k}(\ell)$.
Suppose $\ell>\B$.  Since $\ell\not\equiv b_0\pmod{p^k}$
it follows from (\ref{rhopk}) that
$\rho_{p^k}(\ell+1)=\rho_{p^k}(\ell)$.  Hence
$v_K(\ctilde_{p^k}-\pi_K a)\geqslant\rho_{p^k}(\ell+1)$.
Suppose $\ell\leqslant \B$.  Then by (\ref{rhopk}) we have
$\rho_{p^k}(\ell)=\ell+1$ and
$\rho_{p^k}(\ell+1)=\ell+2$.  Hence there is
$\alpha_{\ell}\in\B$ such that
\[\ctilde_{p^k}-\pi_K a\equiv
\alpha_{\ell}\pi_K^{\ell+1}\pmod{\M_K^{\ell+2}}.\]
Set $\atilde=a+\alpha_{\ell}\pi_K^{\ell}$.  Then
$v_K(\ctilde_{p^k}-\pi_K\atilde)\geqslant
\ell+2=\rho_{p^k}(\ell+1)$.  Since
$\ell\not\equiv b_0\pmod{p^k}$ we have $\ell\not=\B$.
Hence $\atilde$ satisfies conditions (i) and (ii) of
Definition~\ref{standard}.  We conclude that if
$\ell\not\equiv b_0\pmod{p^k}$ then $\ftilde(X)$ is in
$(\ell+1)$-standard form.

\smallskip \noindent
\textbf{Case 2:}
$\boldsymbol{\ell\equiv b_0\pmod{p^k}}$\textbf{.}
By (\ref{ellplus}) we have
$\rho_{h}(\ell+1)=\rho_{h}(\ell)$ for
$1\leqslant h \leqslant p^k-1$.  Reasoning as in the
preceding paragraph we find that for any choice of
$r\in\RR$ the coefficients of the minimum polynomial
(\ref{ftpk}) of $\pitilde_L=\pi_L+r\pi_L^{\ell+1}$ over
$K$ satisfy $v_K(\ctilde_{b_0} -\omega\pi_K^{A_0})
\geqslant \rho_{b_0}(\ell+1)$ and
$v_K(\ctilde_{h})\geqslant \rho_{h}(\ell+1)$ for
all $1\leqslant h\leqslant p^k-1$ such that $h\not=b_0$.
We need to choose $r$ so that $v_K(\ctilde_{p^k}
-\pi_K\atilde)\geqslant\rho_{p^k}(\ell+1)$ for
some $\atilde$ which satisfies conditions (i) and
(ii) of Definition~\ref{standard}.

If $\ell < \B$ then by Proposition~\ref{simple}(i) we get
\[\ctilde_{p^k} \equiv c_{p^k}+c_{p^k}^{\ell+1}
r^{p^k}\pmod{\M_K^{\ell+2}}.\]
Since $c_{p^k}\equiv\pi_K\pmod{\M_K^2}$ it follows that
\begin{equation} \label{cpktilde}
\ctilde_{p^k} \equiv c_{p^k} +\pi_K^{\ell+1}r^{p^k}
\pmod{\M_K^{\ell +2}}.
\end{equation}
Write
\begin{equation}
\label{cpk}
c_{p^k}=\pi_K(1+ \alpha_1 \pi_K +\alpha_2 \pi_K^{2}+\cdots)
\end{equation}
with $\alpha_i \in \RR$. Since the residue field $\Kbar$
is perfect, there exists $r\in \RR$ such that $r^{p^k}
\equiv -\alpha_{\ell} \pmod{\M_K}$.  It follows from
(\ref{cpktilde}) that with this choice of $r$ we get
\[\ctilde_{p^k} \equiv \pi_K(1+ \alpha_1 \pi_K
+\cdots + \alpha_{\ell-1}\pi_K^{\ell-1} + 0\cdot
\pi_K^{\ell}) \pmod{\M_K^{\ell+2}}.\]
By (\ref{rhopk}) we have $\rho_{p^k}(\ell)=\ell+1$ and
$\rho_{p^k}(\ell+1)=\ell+2$.  Since $f(X)$ is in
$\ell$-standard form, it follows that $\ftilde(X)$ is in
$(\ell+1)$-standard form.

If $\ell=\B$ then by Proposition~\ref{simple}(ii)
we get
\[\ctilde_{p^k}\equiv
c_{p^k} +(-1)^{A_0+1}b_0c_{p^k}^{\ell+1-A_0}c_{b_0}r
+c_{p^k}^{\ell+1}r^{p^k}\pmod{\M_K^{\ell+2}}.\]
Since $c_{p^k}\equiv\pi_K\pmod{\M_K^2}$ and
$c_{b_0}\equiv\omega\pi_K^{A_0}\pmod{\M_K^{A_0+1}}$
we can rewrite this congruence as
\begin{alignat}{2} \nonumber
\ctilde_{p^k}
&\equiv c_{p^k} +(-1)^{A_0+1}b_0\omega\pi_K^{\ell+1}r
+\pi_K^{\ell+1}r^{p^k}
&&\pmod{\M_K^{\ell+2}} \\
&\equiv c_{p^k}+\pi_K^{\ell+1}\psi_{i_0,\omega}(r)
&&\pmod{\M_K^{\ell+2}}. \label{psi}
\end{alignat}
Write $c_{p^k}$ as a power series in $\pi_K$ with
coefficients in $\RR$, as in (\ref{cpk}).  It
follows from the definition of $\Tbar_{i_0,\omega}$ that
there are $\gamma\in T_{i_0,\omega}$ and $r\in\RR$ such
that $\alpha_{\ell}\equiv\gamma-\psi_{i_0,\omega}(r)
\pmod{\M_K}$.  Hence by (\ref{psi}) we get
\[\ctilde_{p^k} \equiv \pi_K(1+
\alpha_1 \pi_K +\cdots + \alpha_{\ell-1}\pi_K^{\ell-1} +
\gamma\cdot \pi_K^{\ell}) \pmod{\M_K^{\ell+2}}.\]
Once again we get $\rho_{p^k}(\ell)=\ell+1$ and
$\rho_{p^k}(\ell+1)=\ell+2$ by (\ref{rhopk}).
Since $f(X)$ is in $\ell$-standard form it follows that
$\ftilde(X)$ is in $(\ell+1)$-standard form.

Suppose $\ell > \B$.  Since
$\ell\equiv b_0\pmod{p^k}$ it follows from
(\ref{rhopk}) that
\begin{align*}
\rho_{p^k}(\ell)&=\frac{i_0+\ell}{p^k}+1
=\frac{i_0+\ell+p^k}{p^k} \\
\rho_{p^k}(\ell+1)
&=\left\lceil\frac{i_0+\ell+1}{p^k}\right\rceil+1
=\rho_{p^k}(\ell)+1.
\end{align*}
Set $t=\rho_{p^k}(\ell)$.  Then by
Proposition~\ref{simple}(iii) we get
\[ \ctilde_{p^k} \equiv c_{p^k} +(-1)^{t +\ell+A_0}b_0
c_{p^k}^{t-A_0}c_{b_0}r \pmod{\M_K^{t+1}}. \]
Since $c_{p^k}\equiv\pi_K\pmod{\M_K^2}$ and
$c_{b_0}\equiv\omega\pi_K^{A_0}\pmod{\M_K^{A_0+1}}$
we can rewrite this congruence as
\[\ctilde_{p^k}
\equiv c_{p^k} +(-1)^{t+\ell+A_0}b_0\omega\pi_K^tr
\pmod{\M_K^{t+1}}.\]
Write $c_{p^k}$ as in (\ref{cpk}) and let $r\in\RR$
satisfy
\[r\equiv(-1)^{t+\ell+A_0+1}(b_0\omega)^{-1}\alpha_t
\pmod{\M_K}.\]
Then
\[\ctilde_{p^k} \equiv \pi_K(1+ \alpha_1 \pi_K
+\cdots + \alpha_{t-1}\pi_K^{t-2} + 0\cdot
\pi_K^{t-1}) \pmod{\M_K^{t+1}}.\]
Since $f(X)$ is in $\ell$-standard form and
$\rho_{p^k}(\ell+1)=t+1$ it follows that $\ftilde(X)$ is
in $(\ell+1)$-standard form.
\end{proof}

We now use induction to show that there is a uniformizer
$\pihat_L$ for $L$ whose minimum polynomial $\fhat(X)$
is in standard form.  Indeed, it follows from
Lemma~\ref{lemma1} that the minimum polynomial
$f(X)=f_1(X)$ of the initial uniformizer
$\pi_L=\pi_L^{(1)}$ is in 1-standard form.
Proposition~\ref{main-thm1} shows that we can find a new
uniformizer $\pi_L^{(2)}$ whose minimum polynomial
$f_2(X)$ is in 2-standard form.  Repeating this
procedure, at step $\ell$ we get a new uniformizer
$\pi_L^{(\ell)}$ whose minimum polynomial $f_{\ell}(X)$
is in $\ell$-standard form.  The sequence of
uniformizers $(\pi_L^{(\ell)})_{\ell\geqslant1}$ is
Cauchy, and hence converges to a uniformizer
$\pihat_L$ for $L$.  It follows from
Theorem~\ref{stability} that the sequence
$(f_{\ell}(X))_{\ell\geqslant1}$ of minimum polynomials
converges coefficientwise to the minimum polynomial
$\fhat(X)$ of $\pihat_L$.  Hence $\fhat(X)$ is in
standard form.

We thus have the following degree-$p^k$ analog of a fact
which Amano proved for extensions of degree $p$ (see
Theorem~4 of \cite{amano}).

\begin{corollary} \label{main-cor}
Let $L/K$ be a totally ramified extension of degree
$p^k$ with just two distinct indices of inseparability.
Write $i_0=p^kA_0-b_0$ with $1\leqslant b_0\leqslant
p^k$.  Then there exists a uniformizer $\pihat_L$ of $L$
whose minimum polynomial has the form
\[\fhat(X)=X^{p^k} +(-1)^{b_0} \omega\pi_K^{A_0}
X^{p^k -b_0} +(-1)^{p^k} \pi_K a\]
for some $\omega \in \RR\setminus \{0\}$ and $a\in\OO_K$
with $a \equiv 1\pmod{\M_K}$.
\end{corollary}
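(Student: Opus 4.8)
The plan is to observe that this corollary is essentially a repackaging of the iterative construction already assembled from Lemma~\ref{lemma1} and Proposition~\ref{main-thm1}, so the work is in invoking those and then reading the shape off Definition~\ref{standard}. First I would fix an arbitrary uniformizer $\pi_L$ of $L$ with minimum polynomial $f(X)$ as in \eqref{piLpk}. Writing $c_{p^k}\equiv\beta\pi_K\pmod{\M_K^2}$ with $\beta\in\RR$ and choosing $\xi\in\RR$ with $\xi^{p^k}=\beta$ (possible since $\Kbar$ is perfect), I may replace $\pi_L$ by $\xi^{-1}\pi_L$ and thereby assume $c_{p^k}\equiv\pi_K\pmod{\M_K^2}$. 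Since $v_K(c_{b_0})=A_0$, I pick $\omega\in\RR\smallsetminus\{0\}$ with $c_{b_0}\equiv\omega\pi_K^{A_0}\pmod{\M_K^{A_0+1}}$; by Lemma~\ref{lemma1}, $f$ is then in $1$-standard form, with witness the explicit three-term polynomial $g$ appearing there.

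Next I would iterate Proposition~\ref{main-thm1}: starting from $\pi_L^{(1)}=\pi_L$, at step $\ell$ the proposition produces $r_\ell\in\RR$ such that $\pi_L^{(\ell+1)}=\pi_L^{(\ell)}+r_\ell(\pi_L^{(\ell)})^{\ell+1}$ has minimum polynomial $f_{\ell+1}(X)$ in $(\ell+1)$-standard form. Because $v_L\big(\pi_L^{(\ell+1)}-\pi_L^{(\ell)}\big)\geqslant\ell+1$, the sequence $(\pi_L^{(\ell)})_{\ell\geqslant1}$ is Cauchy and converges to a uniformizer $\pihat_L$ of $L$; by Theorem~\ref{stability} the minimum polynomials $f_\ell(X)$ converge coefficientwise to the minimum polynomial $\fhat(X)$ of $\pihat_L$, and since each $f_\ell$ is in $\ell$-standard form this forces $\fhat(X)$ to be in standard form. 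Finally, unwinding Definition~\ref{standard}, a degree-$p^k$ Eisenstein polynomial in standard form has the shape $\fhat(X)=X^{p^k}+(-1)^{b_0}\omega\pi_K^{A_0}X^{p^k-b_0}+(-1)^{p^k}\pi_K a$ with $a=1+\sum_{j\geqslant1}\alpha_j\pi_K^j$ (or the variant with a trailing $\gamma\pi_K^{\B}$ when $\B\in\Z$), so in particular $a\equiv1\pmod{\M_K}$, which is exactly the assertion; the constraint $1\leqslant A_0\leqslant e_K$ needed for Definition~\ref{standard} to apply is automatic here by Lemma~\ref{i0bound}, since $p^kA_0=i_0+b_0<p^ke_K+p^k$.

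I expect the one genuinely delicate point to be the passage to the limit: one must confirm not only that $(\pi_L^{(\ell)})_\ell$ is Cauchy and has a uniformizer as limit, but that the property "$f_\ell$ is in $\ell$-standard form" is preserved under the coefficientwise limit, i.e. that the limit polynomial lies in the (genuinely smaller) class of standard-form polynomials rather than merely being $\ell$-standard for each finite $\ell$. This is where one uses that $\rho_h(\ell)\to\infty$ as $\ell\to\infty$ for every $h$, so the $\sim_\ell$-classes of the witnessing standard-form polynomials $\fhat_\ell$ shrink to a single point, which must be $\fhat(X)$. Everything else is the bookkeeping already carried out in Lemma~\ref{lemma1} and Proposition~\ref{main-thm1}.
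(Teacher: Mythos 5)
Your proposal is correct and follows essentially the same route as the paper: normalize the constant term, use Lemma~\ref{lemma1} as the base case, iterate Proposition~\ref{main-thm1} to get a Cauchy sequence of uniformizers, and pass to the limit via Theorem~\ref{stability} to obtain a minimum polynomial in standard form, whose shape gives the corollary. Your closing remark on why the coefficientwise limit is genuinely in standard form (using $\rho_h(\ell)\to\infty$) makes explicit a point the paper treats tersely, but it is the same argument.
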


\section{Proof of the theorem: Uniqueness}
\label{uniqueness}

Let $K$ be a local field whose residue field $\Kbar$ has
characteristic $p$.  Let $k\ge2$ and let $L/K$ be a
totally ramified separable extension of degree $p^k$
which has two distinct indices of inseparability. We
have shown that there exists a uniformizer $\pi_L$ for
$L$ whose minimum polynomial $f(X)$ is in standard form
(see Definition~\ref{standard}).  Thus $f(X)$ has only
three nonzero coefficients, and the possible values of
these coefficients are strictly limited by the indices
of inseparability of $L/K$.  In this section we show
that the polynomial $f(X)$ corresponding to $L/K$ is
uniquely determined once we have chosen a uniformizer
$\pi_K$ for $K$ and sets $\Tbar_{i_0,\omega}$ of coset
representatives for
$\overline{K}/\psibar_{i_0,\omega}(\overline{K})$.  This
will complete the proof of Theorem~\ref{main}.

\subsection{Some Lemmas} 

We begin by considering a few preliminary results.  In
all cases we assume that $L/K$ is a separable totally
ramified extension of degree $p^k$ with two distinct
indices of inseparability.  Hence by
Proposition~\ref{break}(i) we have $i_k=0$ and
$i_{k-1}=\cdots=i_1=i_0$.

\begin{proposition}
\label{prop-elem-sym}
Let $1 \leqslant h\leqslant p^k$ and set
$j=\min\{v_p(h),k\}$. Let $\sigma_1,\ldots,
\sigma_{p^k}$ be the $K$-embeddings of $L$ into a
separable closure $\Omega$ of $K$
and let $e_h(X_1,...,X_{p^k})$ be the $h$th elementary symmetric
polynomial in $p^k$ variables.  For $\alpha \in L$ set
$E_h(\alpha)=e_h(\sigma_1(\alpha), \ldots,
\sigma_{p^k}(\alpha))$.  Define $g_h:\Z\rightarrow\Z$ by
$g_h(r)=\min\{v_K(E_h(\alpha)): \alpha \in
\M_L^r\}$. Then
\[ g_h(r) \geqslant \left \lceil \frac{i_j +hr}{p^k}
\right \rceil. \]
\end{proposition}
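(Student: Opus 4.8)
The plan is to reduce everything to a statement about valuations of elementary symmetric functions evaluated on $\mathfrak{M}_L^r$, and then to prove that statement by relating $E_h(\alpha)$ to the coefficients of a minimum polynomial. The key observation is that if we pick $\alpha \in \mathfrak{M}_L^r$ and let $m(X)$ be its minimum polynomial over $K$, then (up to sign) the coefficient of $X^{p^k - h}$ in $m(X)^{[L:K(\alpha)]}$-th-power-adjusted fashion is $E_h(\alpha)$; more directly, $E_h(\alpha)$ is, up to sign, the coefficient of $X^{p^k-h}$ in $\prod_i (X - \sigma_i(\alpha))$, which is a power of the minimum polynomial of $\alpha$ when $\alpha$ generates a proper subfield, and is the minimum polynomial itself when $L = K(\alpha)$. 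So the first step is: it suffices to handle the generic case $L = K(\alpha)$ with $\alpha$ a uniformizer of $L$ scaled into $\mathfrak{M}_L^r$, since the general $\alpha \in \mathfrak{M}_L^r$ can be approximated (for the purpose of computing $v_K(E_h)$) by such elements, or handled by a direct combinatorial argument on Galois conjugates.

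The heart of the argument is the case where $\alpha = \pi_L^r \cdot u$ for a unit $u$, or more simply where we take $\alpha$ to be a uniformizer and track $r=1$ first, then scale. For $r = 1$: here $E_h(\pi_L) = c_h$ in the notation of \eqref{pi_L} (the $h$th coefficient of the Eisenstein polynomial, up to sign), and the bound $v_K(c_h) \geqslant \lceil (i_j + h)/p^k \rceil$ is exactly the inequality $p^k v_K(c_h) - h \geqslant i_j \geqslant \widetilde{\imath}_j$ combined with \eqref{i_jtilde}: by definition $\widetilde{\imath}_j = \min\{p^k v_K(c_h) - h : v_p(h) \leqslant j\}$ for degree $p^k$, and $i_j \leqslant \widetilde{\imath}_j$ always, so for any $h$ with $v_p(h) \leqslant j$ we get $p^k v_K(c_h) - h \geqslant \widetilde{\imath}_j \geqslant i_j$. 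But we need it for the specific $j = \min\{v_p(h), k\}$ attached to our $h$, which is fine since then $v_p(h) \leqslant j$. So the $r=1$ case is essentially a restatement of the definition of the indices of inseparability plus Theorem~\ref{well-defined} to make it uniformizer-independent.

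The main work, and the step I expect to be the obstacle, is passing from $r = 1$ to general $r \geqslant 1$ and from uniformizers to arbitrary elements of $\mathfrak{M}_L^r$. The plan is: write a general $\alpha \in \mathfrak{M}_L^r$ as $\alpha = \sum_{a \geqslant r} \lambda_a \pi_L^a$ with $\lambda_a \in \mathfrak{R}$ (Teichm\"uller digits), and expand $E_h(\alpha) = e_h(\sigma_1\alpha, \dots, \sigma_{p^k}\alpha)$ multilinearly. Each monomial in the expansion is a product of $h$ distinct conjugates of various $\pi_L^{a}$'s, so it reduces to understanding $v_K$ of symmetric functions $E_{h'}(\pi_L^{a_1}, \dots)$-type expressions — but these in turn come from the coefficients of the minimum polynomial of $\pi_L^a$, or from Newton-identity manipulations applied to the power sums $\sum_i \sigma_i(\pi_L)^{a}$. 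The cleanest route is probably: (a) establish the claim for $\alpha = \pi_L^r$ by identifying $E_h(\pi_L^r)$ with a coefficient of the minimum polynomial of $\pi_L^r$ — here one uses that $K(\pi_L^r)$ may be a proper subfield with its own indices of inseparability, related to those of $L/K$ via the Hasse–Herbrand function $\phi_k$ and the specialization \eqref{phiLK}; (b) reduce the general case to (a) by an ultrametric/approximation argument, showing that replacing $\alpha$ by its leading term $\lambda_r \pi_L^r$ changes $E_h(\alpha)$ by something of $v_K$ at least as large as the claimed bound, using the stability-type estimates of Theorem~\ref{stability} or a direct multilinear estimate. The delicate point is ensuring the ceiling function and the interplay between $i_j$ for different $j$ (which, by Proposition~\ref{break}(i), collapse to just $i_0$ and $0$) are tracked correctly; with $k \geqslant 2$ and only two indices, $i_j = i_0$ for $j \leqslant k-1$ and $i_k = 0$, so the bound reads $\lceil (i_0 + hr)/p^k \rceil$ for $p \mid\!\!\!/\; h$ or $p^k \mid\!\!\!/\; h \cdot(\text{stuff})$, and $\lceil hr/p^k \rceil$ when $p^k \mid h$, i.e. $h = p^k$, giving $v_K(E_{p^k}(\alpha)) = v_K(N_{L/K}(\alpha)) \geqslant r$, which is obvious — a useful sanity check on the formula.
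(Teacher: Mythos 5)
The paper does not prove this proposition at all: it cites Theorem~4.6 of \cite{elem-sym}, where the actual work is done. So a blind proof here must carry the full weight of that external theorem, and your sketch does not. The $r=1$, $\alpha=\pi_L$ case you settle is indeed immediate from \eqref{i_jtilde} and \eqref{i_j} (it is essentially the definition of $\widetilde{\imath}_j$ together with $i_j\leqslant\widetilde{\imath}_j$), but that case is not the content of the proposition; the content is the bound for \emph{arbitrary} $\alpha\in\M_L^r$, and there your argument is a plan with the decisive steps left open. Two concrete problems: (a) for $\alpha=\pi_L^r$ you propose to read off $E_h(\pi_L^r)$ from the minimum polynomial of $\pi_L^r$, but $\prod_i\bigl(X-\sigma_i(\pi_L^r)\bigr)=m(X)^d$ with $d=[L:K(\pi_L^r)]$, and bounding its coefficients by $\lceil(i_j+hr)/p^k\rceil$ requires relating the indices of inseparability (or at least the coefficient valuations) of the subextension $K(\pi_L^r)/K$ to the $i_j$ of $L/K$ --- you gesture at the Hasse--Herbrand function but never establish this comparison, and it is exactly the kind of statement the cited theorem exists to provide; (b) the reduction of a general $\alpha=\sum_{a\geqslant r}\lambda_a\pi_L^a$ to its leading term does not follow from a ``multilinear estimate,'' because $E_h(\alpha)=e_h(\sigma_1\alpha,\dots,\sigma_{p^k}\alpha)$ is not multilinear in the digits of $\alpha$: expanding it produces mixed terms involving different conjugates of different powers of $\pi_L$, which are not products of elementary symmetric functions of conjugates of a single element, and bounding them below by $\lceil(i_j+hr)/p^k\rceil$ is precisely the statement being proved --- the argument as outlined is circular. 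Theorem~\ref{stability} cannot rescue step (b) either, since it compares minimum polynomials of two \emph{uniformizers} of $L$ that are congruent modulo $\M_L^{\ell+1}$, not symmetric functions of arbitrary elements of $\M_L^r$.

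A further small point: your closing remark specializes $i_j$ using Proposition~\ref{break}(i), but the proposition is stated (and is needed, e.g.\ in Lemmas~\ref{lemmaA}--\ref{lemmaC}) with the general $i_j$ for the given $j=\min\{v_p(h),k\}$, and the cited source proves it for arbitrary separable totally ramified extensions; nothing in a correct proof should depend on the two-index hypothesis. In short, the proposal identifies the right sanity checks ($r=1$; $h=p^k$ giving $v_K(N_{L/K}(\alpha))\geqslant r$) but leaves the genuine content --- uniform control of $v_K(E_h(\alpha))$ for all $\alpha\in\M_L^r$ --- unproved, and the route proposed for it would not close without substantially new input equivalent to the theorem being cited.
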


\begin{proof} This is Theorem~4.6 of \cite{elem-sym},
specialized to the case $n=p^k$.
\end{proof}

\begin{lemma}
\label{lemmaA}
Let $\alpha\in L$ satisfy
$v_K(\alpha)=\dfrac{r}{p^k}>0$.  Then
\[ v_K(N_{L/K}(1+\alpha) -1 -N_{L/K}(\alpha)) \geqslant
\frac{i_0 +r}{p^k}, \]
with equality only if $r\equiv b_0\pmod{p^k}$.
\end{lemma}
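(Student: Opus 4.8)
The plan is to expand the norm into elementary symmetric functions of the conjugates of $\alpha$ and then estimate each piece with Proposition~\ref{prop-elem-sym}.

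First I would write $N_{L/K}(1+\alpha)=\prod_{i=1}^{p^k}(1+\sigma_i(\alpha))=\sum_{h=0}^{p^k}E_h(\alpha)$, where $\sigma_1,\dots,\sigma_{p^k}$ are the $K$-embeddings of $L$ into $\Omega$ and $E_h$ is as in Proposition~\ref{prop-elem-sym}. Since $E_0(\alpha)=1$ and $E_{p^k}(\alpha)=N_{L/K}(\alpha)$, this gives
\[
N_{L/K}(1+\alpha)-1-N_{L/K}(\alpha)=\sum_{h=1}^{p^k-1}E_h(\alpha).
\]
Because $v_K(\alpha)=r/p^k$ we have $\alpha\in\M_L^r$, so Proposition~\ref{prop-elem-sym} applies to each term. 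For $1\leqslant h\leqslant p^k-1$ we have $j:=\min\{v_p(h),k\}=v_p(h)\leqslant k-1$, whence $i_j=i_0$ by Proposition~\ref{break}(i), and therefore
\[
v_K(E_h(\alpha))\geqslant\left\lceil\frac{i_0+hr}{p^k}\right\rceil\geqslant\frac{i_0+hr}{p^k}\geqslant\frac{i_0+r}{p^k},
\]
the last step using $h\geqslant 1$ and $r>0$. The ultrametric inequality applied to the sum then yields the asserted bound on $v_K\bigl(N_{L/K}(1+\alpha)-1-N_{L/K}(\alpha)\bigr)$.

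For the equality clause I would argue as follows. The element $N_{L/K}(1+\alpha)-1-N_{L/K}(\alpha)$ lies in $K$, so its $v_K$-valuation is an integer (or $\infty$); if this valuation equals $\frac{i_0+r}{p^k}$, then $p^k$ divides $i_0+r$, and since $i_0=p^kA_0-b_0\equiv -b_0\pmod{p^k}$ this forces $r\equiv b_0\pmod{p^k}$. One could equivalently observe that for $h\geqslant 2$ the displayed estimate is strict, so equality could be attained only through the linear term $E_1(\alpha)=\operatorname{Tr}_{L/K}(\alpha)\in K$, whose valuation is again an integer.

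There is no real obstacle here: all of the substantive work is contained in Proposition~\ref{prop-elem-sym}. The only points requiring care are verifying that $i_j=i_0$ for every index $h$ in the range $1\leqslant h\leqslant p^k-1$ — which is exactly where the hypothesis of two distinct indices of inseparability enters, via Proposition~\ref{break}(i) — and keeping the congruence $i_0\equiv -b_0\pmod{p^k}$ straight in the final step.
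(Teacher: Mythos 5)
Your proof is correct and follows essentially the same route as the paper: expand $N_{L/K}(1+\alpha)-1-N_{L/K}(\alpha)$ as $\sum_{h=1}^{p^k-1}E_h(\alpha)$, bound each term via Proposition~\ref{prop-elem-sym} together with $i_j=i_0$ for $j\leqslant k-1$, and deduce the equality clause from integrality (the paper phrases this as $\left\lceil\frac{i_0+r}{p^k}\right\rceil=\frac{i_0+r}{p^k}$, which is the same observation as your appeal to $v_K$ taking integer values on $K$).
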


\begin{proof}
Let $\sigma_1, \sigma_2,\ldots,\sigma_{p^k}$ be the $K$-embeddings of $L$ in the separable closure $\Omega$ of $K$. Then 
\[ N_{L/K}(1+\alpha) - 1 -
N_{L/K}(\alpha)=\sum_{h=1}^{p^k-1}E_h(\alpha). \]
Hence by Proposition~\ref{prop-elem-sym} we get
\begin{align*}
v_K(N_{L/K}(1+\alpha) -1 -N_{L/K}(\alpha)) & \geqslant
\min_{1\leqslant h\leqslant p^k-1}v_K(E_h(\alpha)) \\
& \geqslant \min_{1\leqslant h\leqslant p^k-1}
\left \lceil \frac{i_j +hr}{p^k} \right \rceil \\
& = \left \lceil \frac{i_0+r}{p^k} \right \rceil \\
& \geqslant \frac{i_0 +r}{p^k}.
\end{align*}
If equality holds then
$\displaystyle\left \lceil \frac{i_0+r}{p^k}
\right \rceil = \dfrac{i_0+r}{p^k}$, and hence $r
\equiv-i_0\equiv b_0 \pmod{p^k}$. 
\end{proof}

\begin{lemma}
\label{lemmaB}
Let $\alpha \in L$ with $v_K(\alpha)=\dfrac{r}{p^k}>0$.
Then
\begin{enumerate}[(i)]
\item $\displaystyle v_K(\alpha^{p^k}+(-1)^{p^k}N_{L/K}(\alpha))
\geqslant \frac{i_0}{p^k} +r$,
\item $\displaystyle v_K(\alpha^{p^k} -N_{L/K}(\alpha))
\geqslant \frac{i_0}{p^k} +r$.
\end{enumerate}
\end{lemma}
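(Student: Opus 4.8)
The plan is to reduce both parts to Lemma~\ref{lemmaA} by a single algebraic identity. Observe that for any $\alpha\in L$ we have the factorization
\[
\alpha^{p^k}+(-1)^{p^k}N_{L/K}(\alpha)
=\alpha^{p^k}\Bigl(1+(-1)^{p^k}N_{L/K}(\alpha)\alpha^{-p^k}\Bigr)
=\alpha^{p^k}\Bigl(1+(-1)^{p^k}N_{L/K}(\alpha/\alpha)\,\cdot\text{(correction)}\Bigr),
\]
so the clean way is instead to write $\alpha^{p^k}=(-1)^{p^k}N_{L/K}(\alpha)\cdot\bigl((-1)^{p^k}\alpha^{p^k}N_{L/K}(\alpha)^{-1}\bigr)$ and compare with $N_{L/K}(1+\beta)$ for a suitable $\beta$. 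Concretely, I would set $\beta$ so that $1+\beta$ has the same norm-behavior as $\alpha$ up to the needed precision; the natural choice is to factor out $N_{L/K}(\alpha)$ and use that $N_{L/K}(\alpha)=\prod_i\sigma_i(\alpha)$ while $\alpha^{p^k}=\prod_i\alpha$, so that
\[
\alpha^{p^k}-N_{L/K}(\alpha)=\sum_{h=1}^{p^k-1}(-1)^{h+1}E_h(\alpha)\cdot(\text{terms})
\]
after expanding $\prod_i(\alpha-(\alpha-\sigma_i(\alpha)))$ — but the slickest route avoids this expansion entirely.

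The cleanest approach: apply Lemma~\ref{lemmaA} directly. Given $\alpha$ with $v_K(\alpha)=r/p^k>0$, note $v_K(N_{L/K}(\alpha))=r$, so $N_{L/K}(\alpha)=\alpha^{p^k}\cdot u$ for some $u\in L$ with $v_K(u)=0$; more useful is that both $\alpha^{p^k}$ and $N_{L/K}(\alpha)=\prod_i\sigma_i(\alpha)$ are products of $p^k$ elements each of valuation $r/p^k$. Write $\sigma_i(\alpha)=\alpha(1+\delta_i)$ where $\delta_i=(\sigma_i(\alpha)-\alpha)/\alpha$; then $N_{L/K}(\alpha)=\alpha^{p^k}\prod_i(1+\delta_i)$, so
\[
\alpha^{p^k}-N_{L/K}(\alpha)=\alpha^{p^k}\Bigl(1-\prod_{i=1}^{p^k}(1+\delta_i)\Bigr)
=-\alpha^{p^k}\sum_{h=1}^{p^k}e_h(\delta_1,\dots,\delta_{p^k}).
\]
Now $\prod_i(1+\delta_i)=N_{L/K}(1+\delta)$ where $\delta=(\sigma-1)(\alpha)/\alpha$ viewed appropriately — more precisely, $\delta_i=\sigma_i(\gamma)$ for $\gamma$ of positive valuation when $\alpha$ is a uniformizer power — so $\sum_{h\ge1}e_h(\delta)=N_{L/K}(1+\delta)-1$, and by Lemma~\ref{lemmaA} applied to $\delta$ (which has $v_K(\delta)=v_K(\sigma_i(\alpha)-\alpha)-v_K(\alpha)\ge \B/1\cdots$, in any case positive with $p^kv_K(\delta)\ge i_0$ roughly) we get $v_K(N_{L/K}(1+\delta)-1-N_{L/K}(\delta))\ge (i_0+p^kv_K(\delta))/p^k$. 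The point is that $v_K(\delta)\ge \B$ since $\sigma_i(\alpha)-\alpha$ has valuation governed by the ramification break $\B$, giving $v_K\bigl(\sum_h e_h(\delta)\bigr)\ge i_0/p^k+v_K(\delta)\ge i_0/p^k$, hence
\[
v_K(\alpha^{p^k}-N_{L/K}(\alpha))\ge p^k\cdot\frac{r}{p^k}+\frac{i_0}{p^k}\cdot\frac{1}{1}
\]
wait — this needs care: $v_K(\alpha^{p^k})=r$ and we need the extra $i_0/p^k$ from the sum, so the bound is $r+i_0/p^k$, exactly (i).

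For part (ii), since $(-1)^{p^k}N_{L/K}(\alpha)=N_{L/K}(-\alpha)$ when... actually $(-1)^{p^k}N_{L/K}(\alpha)=N_{L/K}(\alpha)$ if $p$ is odd or $k\ge1$ makes $p^k$ even only when $p=2$; in all cases $(-1)^{p^k}N_{L/K}(\alpha)=N_{L/K}((-1)\cdot\alpha)/(-1)^{p^k}\cdot(-1)^{p^k}$ — simpler: $N_{L/K}(-\alpha)=(-1)^{p^k}N_{L/K}(\alpha)$, so (i) reads $v_K(\alpha^{p^k}+(-1)^{p^k}N_{L/K}(\alpha))=v_K(\alpha^{p^k}-N_{L/K}(-\alpha))$, and replacing $\alpha$ by $-\alpha$ throughout (which has the same valuation $r/p^k$) turns (i) into (ii) and vice versa. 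Thus the two statements are equivalent and it suffices to prove one of them.

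The main obstacle I anticipate is making rigorous the claim that the "relative" element $\delta$ (encoding $\sigma_i(\alpha)/\alpha-1$) has valuation at least $\B$ and lies in $\M_L$ in a way that lets Lemma~\ref{lemmaA} or Proposition~\ref{prop-elem-sym} apply — in particular, $\delta$ need not lie in $L$ itself, so one must either work in the Galois closure or, better, express $\sum_{h\ge1}e_h(\delta_1,\dots,\delta_{p^k})$ directly as $\sum_{h=1}^{p^k}(-1)^?\,\alpha^{-h}E_h(\alpha)\cdot(\text{sign})$ and invoke Proposition~\ref{prop-elem-sym} term by term. Going that route: $\prod_i(1+\delta_i)=\prod_i\sigma_i(\alpha)/\alpha=\alpha^{-p^k}N_{L/K}(\alpha)$, but also $\prod_i(\alpha+( \sigma_i(\alpha)-\alpha))=\sum_{h=0}^{p^k}\alpha^{p^k-h}E_h(\sigma(\alpha)-\alpha)$ — no, the $\sigma_i(\alpha)-\alpha$ are not the conjugates of a single element. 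The honest fix is to note $N_{L/K}(\alpha)=\prod\sigma_i(\alpha)$ and expand $\prod_i\sigma_i(\alpha)=\prod_i(\alpha-(\alpha-\sigma_i(\alpha)))$; the quantities $a_i:=\alpha-\sigma_i(\alpha)$ are the conjugates (over $K$) of $\alpha-\sigma(\alpha)$ only if we fix one embedding, which again is not symmetric. So the correct statement to use is simply: $\alpha^{p^k}-N_{L/K}(\alpha)$ has valuation $\ge r+i_0/p^k$, which one proves by writing $N_{L/K}(\alpha)=\prod_{i}\sigma_i(\alpha)$ and $\alpha^{p^k}-N_{L/K}(\alpha)=\alpha\cdot\bigl(\alpha^{p^k-1}-\alpha^{-1}N_{L/K}(\alpha)\bigr)$ and inducting, OR — the approach I will actually take — by applying Proposition~\ref{prop-elem-sym} to the identity $\prod_i(1+\sigma_i(\alpha)\alpha^{-1}-1)$... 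The genuinely clean proof applies Lemma~\ref{lemmaA} to $\alpha^{-1}\cdot(\text{something})$; I will set $\theta=\alpha$ and use that $N_{L/K}(1+\theta^{-1}x)$-type expansions, or most directly: factor $\alpha^{p^k}-N_{L/K}(\alpha)=\bigl(\alpha-(-1)^{p^k-1}\!\!\prod\cdots\bigr)$ — I will present it as follows. We have
\[
\alpha^{p^k}+(-1)^{p^k}N_{L/K}(\alpha)
=\prod_{i=1}^{p^k}\alpha-\prod_{i=1}^{p^k}(-\sigma_i(\alpha))
=\prod_{i=1}^{p^k}\alpha-\prod_{i=1}^{p^k}\bigl(\alpha-(\alpha+\sigma_i(\alpha))\bigr),
\]
and expanding the second product gives $\sum_{h=0}^{p^k}(-1)^h\alpha^{p^k-h}E_h(\alpha+\sigma(\cdot))$ — still not symmetric in the right way. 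I will therefore commit to the route through Lemma~\ref{lemmaA}: put $\beta=\alpha^{-1}\cdot\eta$ where... Given the subtlety, the cleanest honest argument is: apply Lemma~\ref{lemmaA} with $\alpha$ replaced by an element $\alpha'$ chosen so that $N_{L/K}(1+\alpha')=\alpha^{-p^k}N_{L/K}(\alpha)$ and $N_{L/K}(\alpha')=\alpha^{-p^k}N_{L/K}(\alpha)-1+(\text{lower order})$; since $v_K(\alpha^{-p^k}N_{L/K}(\alpha)-1)\ge \B/p^k\cdot p^k$... In the writeup I will take $\alpha'$ with $1+\alpha'$ equal to a unit of valuation $0$ congruent to $\alpha^{-p^k}N_{L/K}(\alpha)$, note $v_K(\alpha')\ge\B$ (the ramification break bound, from $v_L(\sigma(\alpha)-\alpha)\ge\B p^k$ via Proposition~\ref{break}(ii) and \eqref{phiLK}), apply Lemma~\ref{lemmaA} to get $v_K(N_{L/K}(1+\alpha')-1-N_{L/K}(\alpha'))\ge(i_0+p^kv_K(\alpha'))/p^k\ge i_0/p^k$, observe $v_K(N_{L/K}(\alpha'))=p^kv_K(\alpha')\ge i_0/p^k$ as well (since $v_K(\alpha')\ge\B=i_0/(p^k-1)>i_0/p^k$... wait need $p^kv_K(\alpha')\ge i_0/p^k$ i.e. $v_K(\alpha')\ge i_0/p^{2k}$, true), hence $v_K(N_{L/K}(1+\alpha')-1)\ge i_0/p^k$, and multiply back by $\alpha^{p^k}$ to conclude $v_K(\alpha^{p^k}+(-1)^{p^k}N_{L/K}(\alpha))\ge r+i_0/p^k$, which is (i). Part (ii) then follows by replacing $\alpha$ with $-\alpha$, as noted above. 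The delicate point throughout is the bound $v_L(\sigma_i(\alpha)-\alpha)\ge \B\,p^k$ for all embeddings $\sigma_i$, which is exactly the statement that $\B$ is the (unique) ramification break, i.e. Proposition~\ref{break}(ii) combined with the description \eqref{phiLK} of $\phi_{L/K}$; this is where I expect to spend the most care, since one must pass to the Galois closure to speak of all $\sigma_i$ and track that the break does not increase.
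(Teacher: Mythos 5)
Your committed route has a genuine gap, in two distinct places. First, the quantity you must bound is $v_K(\alpha')$ itself, where $1+\alpha'=\alpha^{-p^k}N_{L/K}(\alpha)$: for $p$ odd, $\alpha^{p^k}+(-1)^{p^k}N_{L/K}(\alpha)=-\alpha^{p^k}\alpha'$, so (i) is exactly the statement $v_K(\alpha')\geqslant i_0/p^k$. What your plan actually estimates is $v_K\bigl(N_{L/K}(1+\alpha')-1\bigr)$, which is a different quantity; indeed, if $1+\alpha'$ is taken equal to $\alpha^{-p^k}N_{L/K}(\alpha)$ then $N_{L/K}(1+\alpha')=N_{L/K}(\alpha)^{-p^k}N_{L/K}(\alpha)^{p^k}=1$, so the estimate is vacuous, and ``multiplying back by $\alpha^{p^k}$'' does not reconstruct $\alpha^{p^k}+(-1)^{p^k}N_{L/K}(\alpha)$. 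Second, the input you propose is both misstated and too weak: the unique break gives (in the Galois closure) $v_L(\sigma_i(\alpha)/\alpha-1)\geqslant\B$, not $v_L(\sigma_i(\alpha)-\alpha)\geqslant\B p^k$ (for $\alpha=\pi_L$ the valuation is $\B+1$), and any such per-conjugate bound yields only $v_K(\alpha')\geqslant\B/p^k$, hence $v_K(\alpha^{p^k}+(-1)^{p^k}N_{L/K}(\alpha))\geqslant r+\B/p^k$, far short of the required $r+i_0/p^k=r+(p^k-1)\B/p^k$. The entire content of the lemma is the cancellation among conjugates, which is precisely what Proposition~\ref{prop-elem-sym} encodes; you brush against the correct route (expanding via the $E_h(\alpha)$) but explicitly abandon it. The paper's proof is exactly that route and needs no Galois closure: if $f$ is the minimum polynomial of $\alpha$ and $d=[L:K(\alpha)]$, then $f(\alpha)^d=0$ gives $\alpha^{p^k}+(-1)^{p^k}N_{L/K}(\alpha)=\sum_{h=1}^{p^k-1}(-1)^{h+1}E_h(\alpha)\alpha^{p^k-h}$, and Proposition~\ref{prop-elem-sym} bounds each summand by $\lceil(i_0+hr)/p^k\rceil+\frac{p^k-h}{p^k}r\geqslant\frac{i_0}{p^k}+r$.

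Your reduction of (ii) to (i) also fails. Since $(-\alpha)^{p^k}=(-1)^{p^k}\alpha^{p^k}$ and $N_{L/K}(-\alpha)=(-1)^{p^k}N_{L/K}(\alpha)$, replacing $\alpha$ by $-\alpha$ multiplies the expression in (i) by $(-1)^{p^k}$ and does not exchange (i) with (ii). For $p$ odd, (ii) is literally (i); for $p=2$ the two expressions differ by $2N_{L/K}(\alpha)$, and one must check $v_K(2N_{L/K}(\alpha))=e_K+r\geqslant \frac{i_0}{p^k}+r$, which is where Lemma~\ref{i0bound} ($i_0<p^ke_K$) enters in the paper; your argument never supplies this step.
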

\begin{proof}
Let $f(X)$ be the minimum polynomial of $\alpha$ over
$K$, and set $d=[L:K(\alpha)]$.  Then
\[f(X)^d=X^{p^k}+\sum_{h=1}^{p^k}
(-1)^hE_h(\alpha)X^{p^k-h}.\]
Since $f(\alpha)=0$ we get
\[ \alpha^{p^k} +(-1)^{p^k}N_{L/K}(\alpha)
=\sum_{h=1}^{p^k-1}(-1)^{h+1}E_h(\alpha)
\alpha^{p^k-h}.\]
Therefore by Lemma~\ref{prop-elem-sym} we have
\begin{align*}
 v_K(\alpha^{p^k} +(-1)^{p^k}N_{L/K}(\alpha))& \geqslant \min_{1\leqslant h\leqslant p^k-1}\left\{ v_K(E_{h}(\alpha)) +\frac{p^k -h}{p^k}r\right\} \\
&\geqslant \min_{1\leqslant h \leqslant p^k-1} \left\{  \left\lceil \frac{i_0 +hr}{p^k} \right\rceil  +\frac{p^k-h}{p^k}r \right\} \\
&\geqslant \min_{1\leqslant h \leqslant p^k-1}\left\{ \frac{i_0 +h r}{p^k} +\frac{p^k -h}{p^k}r \right\} \\
&= \frac{i_0}{p^k} +r. 
\end{align*}
This proves (i).  If $p\neq 2$ then (ii) follows
immediately.  If $p=2$ we have
\[ \alpha^{p^k} -N_{L/K}(\alpha) =\alpha^{p^k}
+(-1)^{2^k}N_{L/K}(\alpha) -2N_{L/K}(\alpha) \]
with $v_K(\alpha^{p^k} +(-1)^{2^k}N_{L/K}(\alpha))\geqslant
\dfrac{i_0}{p^k} +r$.  Using Lemma~\ref{i0bound} we get
\[ v_K (2 N_{L/K}(\alpha)) =v_K(2) +v_K(N_{L/K}(\alpha))
= e_K +r \geqslant \frac{i_0}{p^k} +r. \]
Therefore (ii) holds in all cases.
\end{proof}

\begin{lemma}
\label{lemmaC}
Let $\alpha \in L$ with $v_K(\alpha)=\dfrac{r}{p^k}>0$.
Then
\[ v_K((1+\alpha)^{p^k} - N_{L/K}(1+\alpha)) \geqslant
\frac{i_0+r}{p^k} . \]
\end{lemma}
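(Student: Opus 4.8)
The plan is to expand $(1+\alpha)^{p^k}$ by the binomial theorem and match it term-by-term against $N_{L/K}(1+\alpha)$, absorbing the two genuine discrepancies using Lemmas~\ref{lemmaA} and \ref{lemmaB}(ii).

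First I would record the decomposition
\[
(1+\alpha)^{p^k} - N_{L/K}(1+\alpha)
= \bigl(\alpha^{p^k} - N_{L/K}(\alpha)\bigr)
- \bigl(N_{L/K}(1+\alpha) - 1 - N_{L/K}(\alpha)\bigr)
+ \sum_{h=1}^{p^k-1}\binom{p^k}{h}\alpha^h,
\]
which follows at once from $(1+\alpha)^{p^k} = 1 + \alpha^{p^k} + \sum_{h=1}^{p^k-1}\binom{p^k}{h}\alpha^h$. It then suffices to bound each of the three summands below by $\tfrac{i_0+r}{p^k}$ and apply the ultrametric inequality.

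For the first summand, Lemma~\ref{lemmaB}(ii) gives $v_K(\alpha^{p^k}-N_{L/K}(\alpha)) \geqslant \tfrac{i_0}{p^k}+r \geqslant \tfrac{i_0+r}{p^k}$, the last step because $r \geqslant r/p^k$. For the second summand, Lemma~\ref{lemmaA} gives exactly $v_K(N_{L/K}(1+\alpha)-1-N_{L/K}(\alpha)) \geqslant \tfrac{i_0+r}{p^k}$. For the binomial tail I would use that $v_p\binom{p^k}{h} = k - v_p(h) \geqslant 1$ for $1 \leqslant h \leqslant p^k-1$, so each term satisfies $v_K\bigl(\binom{p^k}{h}\alpha^h\bigr) = (k-v_p(h))e_K + \tfrac{hr}{p^k} \geqslant e_K + \tfrac{r}{p^k}$; and $e_K + \tfrac{r}{p^k} \geqslant \tfrac{i_0+r}{p^k}$ is equivalent to $p^k e_K \geqslant i_0$, which is Lemma~\ref{i0bound}.

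The argument is largely bookkeeping, and I do not expect a real obstacle; the one point that requires care is precisely this last inequality $i_0 < p^k e_K$, which is what keeps the binomial tail — the only term whose bound genuinely involves the ramification of $K$ itself rather than of $L/K$ — from being the dominant (too small) contribution. Once that is in place, the three estimates combine immediately to yield the claim.
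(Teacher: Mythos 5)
Your proposal is correct and follows essentially the same route as the paper: the identical three-part decomposition into $\alpha^{p^k}-N_{L/K}(\alpha)$, $N_{L/K}(1+\alpha)-1-N_{L/K}(\alpha)$, and the binomial tail, bounded respectively by Lemma~\ref{lemmaB}(ii), Lemma~\ref{lemmaA}, and Lemma~\ref{i0bound}. The only cosmetic difference is that you invoke the exact valuation $v_p\binom{p^k}{h}=k-v_p(h)$, where the paper only needs $v_K\binom{p^k}{h}\geqslant e_K$.
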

\begin{proof}
Observe that 
\[   (1+\alpha)^{p^k} -N_{L/K}(1+\alpha)=((1+\alpha)^{p^k} -N_{L/K}(\alpha) -1) -(N_{L/K}(1+\alpha) -N_{L/K}(\alpha) -1). \]
By Lemma~\ref{lemmaA} we have
\[ v_K(N_{L/K}(1+\alpha) - N_{L/K}(\alpha) -1) \geqslant \frac{i_0+r}{p^k}. \] 
Now observe that 
\[  (1+\alpha)^{p^k} -N_{L/K}(\alpha) -1 =(\alpha^{p^k} - N_{L/K}(\alpha)) +\sum_{h=1}^{p^k-1}\binom{p^k}{h}\alpha^{h}. \]
By Lemma~\ref{lemmaB}(ii) we have
\[ v_K (\alpha^{p^k} - N_{L/K}(\alpha)) \geqslant \frac{i_0}{p^k} +r > \frac{i_0+r}{p^k}, \]
and by Lemma~\ref{i0bound} we have
\[ v_K\left( \binom{p^k}{h} \alpha^{h}\right) \geqslant
v_K(p) +\frac{hr}{p^k} \geqslant e_K + \frac{r}{p^k}
\geqslant \frac{i_0+r}{p^k} \]
for $1\leqslant h \leqslant p^k-1$.  We conclude that 
\[ v_K((\alpha+1)^{p^k} -N_{L/K}(1+\alpha)) \geqslant
\frac{i_0+r}{p^k} . \qedhere \]
\end{proof}

\subsection{Uniqueness}
With the help of the lemmas of the previous subsection,
we now show that there is a unique polynomial in
standard form corresponding to $L/K$.  This gives the
last step in the proof of Theorem~\ref{main}.

\begin{proposition}
\label{unique}
Let $L/K$ be a totally ramified separable extension of
degree $p^k$.  Then there is a unique Eisenstein
polynomial $f(X)\in\OO_K[X]$ in standard form such that
$L$ is generated over $K$ by a root of $f(X)$.
\end{proposition}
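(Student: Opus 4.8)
The plan is as follows. Existence was established in Section~\ref{existence} for extensions with two distinct indices of inseparability, and by Remark~\ref{exclude} any extension admitting a polynomial in standard form has exactly two such indices; so it suffices to show that $L/K$ admits \emph{at most one} polynomial in standard form, and we may assume $L/K$ has two distinct indices of inseparability. Suppose then that $f(X)$ and $\ftilde(X)$ are both in standard form, with roots $\pi_L$ and $\pitilde_L$ (uniformizers of $L$) and coefficients written as in \eqref{piLpk} and \eqref{ftpk}. By Proposition~\ref{break} the quantity $i_0$, hence $A_0$, $b_0$, and $\B$, is determined by $L/K$ and is common to $f$ and $\ftilde$; moreover standard form forces $c_h=\ctilde_h=0$ for $h\neq b_0,p^k$, $c_{b_0}=\omega\pi_K^{A_0}$, $\ctilde_{b_0}=\widetilde{\omega}\pi_K^{A_0}$, and $c_{p^k}=\pi_K a$, $\ctilde_{p^k}=\pi_K\atilde$ with $a,\atilde$ as in Definition~\ref{standard}. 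Thus $f=\ftilde$ if and only if $\omega=\widetilde{\omega}$ and $a=\atilde$, and the whole proof reduces to comparing these. For the base case I would show $\pitilde_L\equiv\pi_L\pmod{\M_L^2}$: picking $w\in\RR^\times$ with $\pitilde_L\equiv w\pi_L\pmod{\M_L^2}$, write $\pitilde_L=w\pi_L v$ with $v\in1+\M_L$; since $c_{p^k}=N_{L/K}(\pi_L)$ has $\M_K$-valuation $1$ and Lemma~\ref{lemmaA} gives $N_{L/K}(v)\equiv1\pmod{\M_K}$, we get $\ctilde_{p^k}=N_{L/K}(\pitilde_L)\equiv w^{p^k}c_{p^k}\pmod{\M_K^2}$, and as $c_{p^k}\equiv\ctilde_{p^k}\equiv\pi_K\pmod{\M_K^2}$ this forces $w^{p^k}\equiv1\pmod{\M_K}$, whence $w=1$. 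By Theorem~\ref{stability} then $f\sim_1\ftilde$; comparing the coefficients of $X^{p^k-b_0}$ and using $\rho_{b_0}(1)=A_0+1$ gives $\widetilde{\omega}\equiv\omega\pmod{\M_K}$, so $\widetilde{\omega}=\omega$.

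Next I would prove by induction that $\pitilde_L\equiv\pi_L\pmod{\M_L^{\ell+1}}$ for every integer $\ell$ with $1\leqslant\ell<\B$. Assume this for such an $\ell$ but $\pitilde_L\equiv\pi_L+r\pi_L^{\ell+1}\pmod{\M_L^{\ell+2}}$ with $r\in\RR\smallsetminus\{0\}$, and apply Proposition~\ref{simple}. If $\ell\equiv b_0\pmod{p^k}$, part~(i) gives $\ctilde_{p^k}-c_{p^k}\equiv c_{p^k}^{\ell+1}r^{p^k}\pmod{\M_K^{\ell+2}}$, a term of valuation exactly $\ell+1$; but Definition~\ref{standard} makes the $\pi_K^{\ell+1}$-coefficients of $c_{p^k}$ and $\ctilde_{p^k}$ both $0$ (since $\ell\equiv b_0$ and $\ell<\B$), a contradiction. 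If $\ell\not\equiv b_0\pmod{p^k}$, part~(iii) applies with the unique relevant $h$; then $h\neq p^k$, and $h=b_0$ exactly when $\ell\equiv0\pmod{p^k}$. In all these cases $c_h=\ctilde_h$ (both $0$ if $h\neq b_0$, both $\omega\pi_K^{A_0}$ if $h=b_0$, using $\widetilde{\omega}=\omega$), while the correction term $(-1)^{t+\ell+A_0}b_0c_{p^k}^{t-A_0}c_{b_0}r$ has valuation exactly $t$ (since $p\nmid b_0$, $v_K(c_{p^k})=1$, $v_K(c_{b_0})=A_0$, and $t\geqslant A_0$), again a contradiction. Hence $r=0$ and $\pitilde_L\equiv\pi_L\pmod{\M_L^{\ell+2}}$, completing the induction.

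To conclude: if $\lfloor\B\rfloor=0$ then $a=\atilde=1$ by Definition~\ref{standard}, so $f=\ftilde$. If $\B\notin\Z$ and $\lfloor\B\rfloor\geqslant1$, the induction yields $\pitilde_L\equiv\pi_L\pmod{\M_L^{\lfloor\B\rfloor+2}}$, so by Theorem~\ref{stability} and $i_0=(p^k-1)\B$ we get $v_K(\ctilde_{p^k}-c_{p^k})\geqslant\rho_{p^k}(\lfloor\B\rfloor+1)=\lfloor\B\rfloor+2$; since every monomial of $c_{p^k}$ and of $\ctilde_{p^k}$ has $\M_K$-valuation at most $\lfloor\B\rfloor+1$, this forces $\ctilde_{p^k}=c_{p^k}$, hence $f=\ftilde$. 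If $\B\in\Z$, the induction gives $\pitilde_L\equiv\pi_L\pmod{\M_L^{\B+1}}$, so $f\sim_{\B}\ftilde$ and the coefficients of $\pi_K,\dots,\pi_K^{\B}$ in $c_{p^k}$ and $\ctilde_{p^k}$ agree; writing $\pitilde_L\equiv\pi_L+r\pi_L^{\B+1}\pmod{\M_L^{\B+2}}$ with $r\in\OO_K$, Proposition~\ref{simple}(ii) gives $\ctilde_{p^k}-c_{p^k}\equiv\pi_K^{\B+1}\psi_{i_0,\omega}(r)\pmod{\M_K^{\B+2}}$, so the $\pi_K^{\B+1}$-coefficients $\gamma,\widetilde{\gamma}\in T_{i_0,\omega}$ of $c_{p^k},\ctilde_{p^k}$ satisfy $\widetilde{\gamma}-\gamma=\psibar_{i_0,\omega}(\overline{r})\in\psibar_{i_0,\omega}(\Kbar)$; as distinct elements of $T_{i_0,\omega}$ lie in distinct cosets of $\psibar_{i_0,\omega}(\Kbar)$, we get $\gamma=\widetilde{\gamma}$ and $\psibar_{i_0,\omega}(\overline{r})=0$, hence $\ctilde_{p^k}=c_{p^k}$. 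So $f=\ftilde$ in every case.

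The hard part is the inductive step together with the endpoint analysis. Running the induction requires checking, for each residue class of $\ell$ modulo $p^k$ below the ramification break, that a nonzero first-order perturbation would force a coefficient forbidden by Definition~\ref{standard}; and at $\ell=\B$ (when $\B\in\Z$) a nonzero perturbation with $\overline{r}\in\ker\psibar_{i_0,\omega}$ \emph{is} permitted — it corresponds to replacing $\pi_L$ by a conjugate — so one cannot expect $\pitilde_L\equiv\pi_L$ to persist indefinitely, and must instead extract $\gamma=\widetilde{\gamma}$ directly from the coset-representative property of $T_{i_0,\omega}$. The base case, where the constant terms are compared through $c_{p^k}=N_{L/K}(\pi_L)$, relies on Lemma~\ref{lemmaA}, with Lemmas~\ref{lemmaB} and \ref{lemmaC} supplying the accompanying norm estimates.
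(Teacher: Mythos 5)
Your argument is correct, and it reaches uniqueness by a genuinely different route than the paper. The paper compares two standard-form generators directly: it proves $\omega=\omegahat$ by a norm computation (Proposition~\ref{lemma4.6}, resting on Lemmas~\ref{lemmaB} and \ref{lemmaC}), then writes $\pihat_L=\pi_L(1+\theta)$ and, assuming $a\neq\ahat$, pins down $v_L(\theta)\equiv b_0\pmod{p^k}$ and $v_L(\theta)=v_K(a-\ahat)$ (Lemmas~\ref{lemmaD}, \ref{lemmaZ}, \ref{lemmaE}), forcing $v_K(a-\ahat)=\B\in\Z$ and finally $\gamma=\gammahat$ by an explicit expansion of $\fhat(\pi_L(1+\theta))=0$. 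You instead re-run the existence machinery as a successive-approximation argument: a norm computation in the base case gives $\pitilde_L\equiv\pi_L\pmod{\M_L^2}$ and hence $\omega=\widetilde{\omega}$ via Theorem~\ref{stability}, and then Proposition~\ref{simple} shows, residue class by residue class below the break, that a nonzero first-order perturbation would create a Teichm\"uller digit forbidden by Definition~\ref{standard}, so the two uniformizers agree modulo $\M_L^{\lfloor\B\rfloor+2}$ (resp.\ $\M_L^{\B+1}$), after which the coefficients are compared digit by digit; at an integral break you correctly do not try to force $r=0$ but invoke the coset-representative property of $T_{i_0,\omega}$, paralleling the paper's last step. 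What each approach buys: yours makes uniqueness a corollary of the same stability/perturbation iteration used for existence and dispenses with Lemmas~\ref{lemmaB}--\ref{lemmaE} entirely (your one citation of Lemma~\ref{lemmaA} is only for the elementary fact that norms of $1$-units are $1$-units), at the cost of leaning again on Theorem~\ref{perturbation}; the paper's proof avoids Theorem~\ref{perturbation} at this stage, using only the norm estimates of Section~6.1, and yields the intermediate structural facts about $\theta$ along the way. Your exact-valuation and digit-comparison steps (uniqueness of Teichm\"uller expansions, $t\geqslant A_0$, $v_K(b_0)=0$) are stated tersely but all check out.
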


We argue by contradiction. Suppose that $\pi_L$ and
$\pihat_L$ are uniformizers for $L$ whose minimum
polynomials $f(X)$ and $\fhat(X)$ are in standard form:
\begin{align} \label{fpoly}
f(X)&=X^{p^k} +(-1)^{b_0}\omega \pi_K^{A_0} X^{p^k-b_0} +(-1)^{p^k} \pi_K a \\
\fhat(X) &=X^{p^k} +(-1)^{b_0}\omegahat \pi_K^{A_0} X^{p^k-b_0}
+(-1)^{p^k}\pi_K\ahat. \label{gpoly}
\end{align}
Note that the index of inseparability $i_0=p^kA_0-b_0$
of $L/K$ determines $A_0$ and $b_0$.  We now show that
the coefficient of $X^{p^k-b_0}$ is the same for $f(X)$
and $\fhat(X)$.

\begin{proposition}
\label{lemma4.6}
 $\omega =\omegahat$.
\end{proposition}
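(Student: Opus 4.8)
The plan is to extract $\omega$ from the extension $L/K$ in a way that is manifestly independent of the choice of uniformizer. The key observation is that $\omega$ is the leading coefficient (in $\pi_K$) of the $b_0$-th elementary symmetric function $E_{b_0}(\pi_L) = (-1)^{b_0}c_{b_0}$ evaluated at any uniformizer, up to a unit that I must pin down. Since $\pi_L$ and $\pihat_L$ are both uniformizers of $L$, I would write $\pihat_L = u\pi_L + (\text{higher order})$ for some unit, but a cleaner route is to compare both to a common reference. Specifically, since $f(X)$ and $\fhat(X)$ are Eisenstein of the same degree $p^k$ with constant terms $\pi_K a$ and $\pi_K\ahat$ where $a,\ahat\equiv 1\pmod{\M_K}$, we have $N_{L/K}(\pi_L) = (-1)^{p^k}\cdot(-1)^{p^k}\pi_K a = \pi_K a$ and similarly $N_{L/K}(\pihat_L)=\pi_K\ahat$, so $\pihat_L/\pi_L$ has norm $\ahat/a$, a unit congruent to $1$.

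The main step is to relate $c_{b_0}$ and $\chat_{b_0}$ directly. First I would apply the stability theorem (Theorem~\ref{stability}): since $\pihat_L\equiv\pi_L\pmod{\M_L^2}$ is too weak, I instead observe that the index of inseparability computation forces $v_K(c_{b_0})=v_K(\chat_{b_0})=A_0$ regardless of the uniformizer (this is already noted in Section~\ref{iteration}), so both $\omega$ and $\omegahat$ are well-defined nonzero Teichmüller units. To see they are equal I would use Lemma~\ref{lemmaB}: writing $\pi_L^{p^k} = -(-1)^{p^k}N_{L/K}(\pi_L) + (\text{error of valuation} \geq \frac{i_0}{p^k}+1) = \pi_K a + (\cdots)$, and subtracting the minimum polynomial relation $\pi_L^{p^k} = (-1)^{b_0+1}\omega\pi_K^{A_0}\pi_L^{p^k-b_0} + (-1)^{p^k+1}\pi_K a$... actually the cleanest path: from $f(\pi_L)=0$ we get
\[
\omega\pi_K^{A_0}\pi_L^{p^k-b_0} = (-1)^{b_0}\bigl(\pi_L^{p^k} + (-1)^{p^k}\pi_K a\bigr),
\]
and the right side, by Lemma~\ref{lemmaB}(i), equals $(-1)^{b_0}\pi_K a \cdot((-1)^{p^k}-1)+(\text{error})$... this needs care about signs and the norm. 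The honest main obstacle is bookkeeping: I need an expression for $\pi_L^{p^k}$ modulo $\M_L$ to sufficiently high precision (valuation $A_0 + (p^k-b_0)/p^k$, i.e. past the term $\omega\pi_K^{A_0}\pi_L^{p^k-b_0}$) that is intrinsic to $L/K$, and Lemma~\ref{lemmaB} gives exactly $v_K(\pi_L^{p^k} - N_{L/K}(\pi_L)) \geq \frac{i_0}{p^k}+1 = A_0 + \frac{p^k-b_0}{p^k} + \frac{b_0 - ?}{?}$; I should check $\frac{i_0}{p^k}+1 > A_0 + \frac{p^k-b_0}{p^k}$, which holds since $\frac{i_0}{p^k}+1 = A_0 - \frac{b_0}{p^k}+1 > A_0 + \frac{p^k-b_0}{p^k}$ is equivalent to $1 - \frac{b_0}{p^k} > \frac{p^k-b_0}{p^k}$, i.e. $1 > 1$ — \emph{equality}, not strict. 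So the error term sits at exactly the same valuation as the term I want to isolate, which is the crux of the difficulty.

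To resolve this I would use Lemma~\ref{lemmaC} instead, applied not to $\pi_L$ but leveraged through the relation between $\pi_L$ and $\pihat_L$: set $\alpha = \pihat_L - \pi_L$ or work with $\pihat_L/\pi_L = 1 + \beta$ and compute $(\pihat_L/\pi_L)^{p^k}$ versus $N_{L/K}(\pihat_L/\pi_L) = \ahat/a$. Then $\fhat(\pihat_L)=0$ and $f(\pi_L)=0$ can be divided to compare $\omegahat\pi_K^{A_0}\pihat_L^{p^k-b_0}/\pihat_L^{p^k}$ with $\omega\pi_K^{A_0}\pi_L^{p^k-b_0}/\pi_L^{p^k}$, i.e. $\omegahat\pi_K^{A_0}\pihat_L^{-b_0}$ versus $\omega\pi_K^{A_0}\pi_L^{-b_0}$, and the ratio $(\pihat_L/\pi_L)^{-b_0}$ is a unit $\equiv 1\pmod{\M_L}$ since $\pihat_L\equiv\pi_L\pmod{\M_L^2}$ can be arranged (or follows from both polynomials being in standard form with the same $i_0$). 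Reducing mod $\M_L$, the $\pi_K^{A_0}\pi_L^{-b_0}$ factors contribute a fixed nonzero residue, $(\pihat_L/\pi_L)^{b_0}\equiv 1$, and the constant-term contributions $\pi_K a, \pi_K\ahat$ with $a\equiv\ahat\equiv 1$ match, forcing $\overline\omega = \overline{\omegahat}$ in $\Kbar$; since both are Teichmüller representatives, $\omega = \omegahat$. I expect the genuine work to be in justifying $\pihat_L\equiv\pi_L\pmod{\M_L^2}$ (or absorbing the discrepancy), which should follow from Theorem~\ref{stability} applied in reverse together with the fact that $f\sim_1 \fhat$ whenever both are in standard form with matching $i_0$.
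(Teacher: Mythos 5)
Your main line---compare the two Eisenstein relations through the unit $u=\pihat_L/\pi_L$ and control the constant-term discrepancy by comparing $u^{p^k}$ with $N_{L/K}(u)=\ahat/a$ via Lemma~\ref{lemmaC}---is essentially the paper's proof in slightly different packaging. The paper subtracts $f(\pi_L)=0$ from $\fhat(\pihat_L)=0$, which is why it also needs Lemma~\ref{lemmaB}(i): the Lemma~\ref{lemmaB} error gets multiplied by $u^{p^k}-1$, whose positive valuation restores exactly the strictness you correctly observed is lost when Lemma~\ref{lemmaB} is applied to $\pi_L$ alone. Your variant (dividing the two relations by $\pi_L^{p^k}$ and $\pihat_L^{p^k}$) makes Lemma~\ref{lemmaC} alone suffice, since the constant-term difference factors as a unit times $u^{p^k}-N_{L/K}(u)$. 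So the skeleton is sound and marginally leaner than the paper's.

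The genuine gap is your treatment of the claim $u\equiv1\pmod{\M_L}$ (equivalently $\pihat_L\equiv\pi_L\pmod{\M_L^2}$). This cannot be ``arranged'': $\pihat_L$ is a given uniformizer of $L$, and a root of $\fhat$ closer to $\pi_L$ need not exist in $L$. Nor does Theorem~\ref{stability} have a ``reverse'': it passes from closeness of uniformizers to closeness of coefficients, never the other way, so appealing to $f\sim_1\fhat$ gives you nothing about $u$. Without $\overline{u}=1$ your final congruence yields only $\overline{\omega}=\overline{\omegahat}\,\overline{u}^{\,-b_0}$, not $\omega=\omegahat$. The fix is the one-line norm argument you already half-wrote but never completed: $N_{L/K}(u)=\ahat/a\equiv1\pmod{\M_K}$, and since $L/K$ is totally ramified, $N_{L/K}(u)\equiv u^{p^k}\pmod{\M_L}$, so $\overline{u}^{p^k}=1$ and hence $\overline{u}=1$ because $x\mapsto x^{p^k}$ is injective on the perfect field $\Kbar$; this is precisely how the paper opens its proof. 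Two smaller points of rigor: the comparison must be made at level $\M_L^{i_0+1}$ (the factor $\pi_K^{A_0}\pi_L^{-b_0}$ is not a unit, so ``reducing mod $\M_L$'' of the raw identity proves nothing), and saying the constant-term contributions ``match because $a\equiv\ahat\equiv1$'' is not a justification---$v_K(a-\ahat)\geqslant1$ is useless when $i_0/p^k>1$; what you need, and what Lemma~\ref{lemmaC} applied to $u=1+\theta$ with $v_L(\theta)\geqslant1$ gives, is $v_K(u^{p^k}-N_{L/K}(u))\geqslant(i_0+1)/p^k>i_0/p^k$.
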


\begin{proof}
We have
\[v_K(\omega\pi_K^{A_0} \pi_L^{p^k-b_0})
=v_K(\omegahat\pi_K^{A_0} \pihat_L^{p^k-b_0})
=A_0+\frac{p^k-b_0}{p^k}=\frac{i_0}{p^k}+1.\]
Since $\pi_L$ and $\pihat_L$ are uniformizers for $L$,
there is $u\in\OO_L^{\times}$ such that
$\pihat_L=u\pi_L$.  It follows that
\[\pi_K\ahat=N_{L/K}(\pihat_L)=N_{L/K}(u)N_{L/K}(\pi_L)
=N_{L/K}(u)\pi_Ka.\]
Hence $N_{L/K}(u)=\ahat/a$ is a 1-unit in $\OO_K$.
Since $L/K$ is totally ramified we have
$N_{L/K}(u)\equiv u^{p^k}\pmod{\M_L}$.  Therefore $u$ is
a 1-unit in $\OO_L$.
Since $f(\pi_L)=\fhat(\pihat_L)=0$ we get
\begin{align*}
(-1)^{b_0}(\omega\pi_K^{A_0} \pi_L^{p^k-b_0}
-\omegahat\pi_K^{A_0} \pihat_L^{p^k-b_0})
\hspace{-3cm} \\
&=(\pihat_L^{p^k} + (-1)^{p^k}N_{L/K}(\pihat_L))
-(\pi_L^{p^k}+(-1)^{p^k}N_{L/K}(\pi_L)) \\
&=(u^{p^k}-1)(\pi_L^{p^k}+(-1)^{p^k}N_{L/K}(\pi_L))
+(-1)^{p^k}N_{L/K}(\pi_L)(N_{L/K}(u)-u^{p^k}).
\end{align*}
It follows from Lemmas~\ref{lemmaB}(i) and \ref{lemmaC}
that
\begin{align*}
v_K((u^{p^k} -1)(\pi_L^{p^k}+(-1)^{p^k}N_{L/K}(\pi_L)))
&> \frac{i_0}{p^k} +1 \\
v_K(N_{L/K}(\pi_L)(N_{L/K}(u)-u^{p^k}))
&\geqslant 1 +\frac{i_0+1}{p^k}.
\end{align*}
Therefore
\[v_K(\omega \pi_K^{A_0}\pi_L^{p^k-b_0}
-\omegahat \pi_K^{A_0}\pihat_L^{p^k-b_0})
> \frac{i_0}{p^k}+1
=v_K(\omega \pi_K^{A_0}\pi_L^{p^k-b_0}).\]
Since $\omega,\omegahat\in\RR$ we conclude
that $\omega =\omegahat$.
\end{proof}

We now assume that $a\neq\ahat$, and derive a
contradiction.

\begin{lemma}
\label{lemmaD}
Let $\pi_L$ and $\pihat_L$ be as above and write
$\pihat_L =
\pi_L (1+\theta)$ with $\theta\in\M_L$. Then
$v_L(\theta) \equiv b_0 \pmod{p^k}$.
\end{lemma}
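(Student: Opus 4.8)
My plan is to argue by contradiction and read the claim off from the perturbation result Proposition~\ref{simple}(iii). First I would set $r=v_L(\theta)$; since $a\neq\ahat$ we have $N_{L/K}(1+\theta)=\ahat/a\neq1$, hence $\theta\neq0$, so $r$ is a well-defined positive integer. The perturbation $\pihat_L-\pi_L=\pi_L\theta$ has valuation $r+1$, but its leading coefficient $\theta\pi_L^{-r}$ lies only in $\OO_L^\times$ and not a priori in $\OO_K$; to put it into the shape required by Proposition~\ref{simple}, I would choose the Teichm\"uller representative $\nu\in\RR\setminus\{0\}$ with $\nu\equiv\theta\pi_L^{-r}\pmod{\M_L}$, so that $\pihat_L\equiv\pi_L+\nu\pi_L^{r+1}\pmod{\M_L^{r+2}}$ with $\nu\in\OO_K^\times$. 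Then I assume, for contradiction, that $r\not\equiv b_0\pmod{p^k}$.

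Under this assumption the hypothesis ``$\ell\not\equiv b_0\pmod{p^k}$'' of Proposition~\ref{simple}(iii) holds with $\ell=r$, so I can apply that proposition with perturbation parameter $\nu$. Taking $h$ to be the unique integer with $1\leqslant h\leqslant p^k$ and $p^k\mid i_0+r+h$, one gets $h\equiv b_0-r\pmod{p^k}$ (using $i_0\equiv-b_0$), which is nonzero modulo $p^k$ by our assumption, forcing $1\leqslant h\leqslant p^k-1$; in particular $h\neq p^k$. Writing $t=(i_0+r+h)/p^k$, Proposition~\ref{simple}(iii) then yields
\[\chat_h\equiv c_h+(-1)^{t+r+A_0}b_0\,c_{p^k}^{\,t-A_0}c_{b_0}\,\nu\pmod{\M_K^{t+1}},\]
where $c_h$ and $\chat_h$ are the corresponding coefficients of $f(X)$ and $\fhat(X)$.

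Now the standard-form hypotheses should finish the argument. Since $h\neq p^k$ and both $f(X)$ and $\fhat(X)$ are in standard form with the same $X^{p^k-b_0}$-coefficient (invoking $\omega=\omegahat$, Proposition~\ref{lemma4.6}), we have $c_h=\chat_h$: both are $0$ when $h\neq b_0$, and both equal $\omega\pi_K^{A_0}$ when $h=b_0$. Hence the displayed congruence reduces to $(-1)^{t+r+A_0}b_0\,c_{p^k}^{\,t-A_0}c_{b_0}\,\nu\in\M_K^{t+1}$. But $t-A_0=(r+h-b_0)/p^k$ is a nonnegative integer (it is an integer because $r+h\equiv b_0\pmod{p^k}$, and nonnegative because $r,h\geqslant1$ and $b_0\leqslant p^k$ give $r+h-b_0>-p^k$), while $v_K(c_{p^k})=1$, $v_K(c_{b_0})=A_0$, $p\nmid b_0$, and $\nu\in\OO_K^\times$, so this element has valuation exactly $(t-A_0)+A_0=t$ — contradicting that it lies in $\M_K^{t+1}$. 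Therefore $r\equiv b_0\pmod{p^k}$, as claimed. I expect the only real obstacle to be the bookkeeping in the setup: identifying that it is part (iii) (rather than (i) or (ii)) of Proposition~\ref{simple} that applies, and adjusting the perturbation so its parameter lies in $\OO_K$; the substance of the argument is the observation that a perturbation of valuation incongruent to $b_0$ is forced to create a nonzero coefficient in one of the slots $X^{p^k-h}$ that standard form requires to vanish.
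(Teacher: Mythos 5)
Your proof is correct, but it takes a genuinely different route from the paper. The paper proves Lemma~\ref{lemmaD} by expanding $\fhat(\pi_L(1+\theta))=0$ against $f(\pi_L)=0$, discarding the binomial terms via Lemma~\ref{i0bound} and $k\geqslant2$, and then rewriting the dominant part in terms of norms so that the equality criterion in Lemma~\ref{lemmaA} (which rests on the elementary-symmetric-function bounds of Proposition~\ref{prop-elem-sym}) directly forces $v_L(\theta)\equiv b_0\pmod{p^k}$. You instead replace $\theta$ by a Teichm\"uller-coefficient perturbation $\nu\pi_L^{r+1}$ and re-use the perturbation machinery, Proposition~\ref{simple}(iii), whose hypothesis is exactly your contradiction hypothesis $r\not\equiv b_0\pmod{p^k}$; since both minimal polynomials are three-term standard-form polynomials with the same middle coefficient (Proposition~\ref{lemma4.6}, which is indeed proved before this lemma, so there is no circularity --- and you genuinely need it in the case $h=b_0$, i.e.\ $p^k\mid r$), the perturbation term of exact valuation $t$ cannot lie in $\M_K^{t+1}$. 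All the supporting details check out: $\nu$ is a unit because it is the Teichm\"uller lift of the leading unit of $\theta$, $h\neq p^k$ because $h\equiv b_0-r\not\equiv0\pmod{p^k}$, $t-A_0\geqslant0$, and $b_0$, $\omega$, $a$ are units so the valuation count is exact. What your route buys is brevity and conceptual unity: the same perturbation theorem drives both existence and uniqueness, and only case~(iii) of Proposition~\ref{simple} is needed. What the paper's route buys is that the uniqueness argument stays independent of Theorem~\ref{perturbation}, using instead the self-contained norm estimates (Lemmas~\ref{lemmaA} and \ref{lemmaB}), which are needed anyway for Proposition~\ref{lemma4.6} and Lemma~\ref{lemmaE}, and the equality clause of Lemma~\ref{lemmaA} yields the congruence directly rather than by contradiction.
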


\begin{proof}
Since $\fhat(\pi_L(1+\theta))=\fhat(\pihat_L)=0$ and
$f(\pi_L)=0$ we get
\begin{align*}
0&=\pi_L^{p^k}(1+\theta)^{p^k} +(-1)^{b_0}\omega
\pi_K^{A_0} \pi_L^{p^k-b_0}(1+\theta)^{p^k-b_0} +(-1)^{p^k}\pi_K \ahat \\
&= \pi_L^{p^k}
+\pi_L^{p^k}\sum_{s=1}^{p^k}\binom{p^k}{s}\theta^s
+(-1)^{b_0} \omega \pi_K^{A_0}
\pi_L^{p^k-b_0}\left(1+\sum_{t=1}^{p^k-b_0}
\binom{p^k-b_0}{t}\theta^t\right)
+(-1)^{p^k}\pi_K \ahat \\
&=\pi_L^{p^k}\sum_{s=1}^{p^k}\binom{p^k}{s}\theta^s
+(-1)^{b_0}\omega\pi_K^{A_0}\pi_L^{p^k-b_0}
\sum_{t=1}^{p^k-b_0}\binom{p^k-b_0}{t}\theta^t
+(-1)^{p^k} \pi_K(\ahat-a).
\end{align*}
It follows that
\begin{equation}
\label{eq4}
(-1)^{p^k}\pi_K(\ahat-a)+\pi_L^{p^k}\theta^{p^k}
+\pi_L^{p^k}\sum_{s=1}^{p^k-1}\binom{p^k}{s}\theta^s
=(-1)^{b_0+1}\omega\pi_K^{A_0}\pi_L^{p^k-b_0}
\sum_{t=1}^{p^k-b_0}\binom{p^k-b_0}{t}\theta^t.
\end{equation}
By considering the term $t=1$ in the sum on the right we
see that the $K$-valuation of \eqref{eq4} is
\[ A_0 +\frac{p^k-b_0}{p^k} +v_K(\theta)=1+
\frac{i_0}{p^k} +v_K(\theta). \]
We claim that for $1\leqslant s \leqslant p^k-1$, 
\begin{equation}
\label{eq6}
v_K \left(\pi_L^{p^k}\binom{p^k}{s} \theta^{s}\right) > 1 + \frac{i_0}{p^k} +v_K(\theta).
\end{equation}
By Lemma~\ref{i0bound} we get
\[ v_K \left(\pi_L^{p^k}\binom{p^k}{s} \theta^{s}\right) \geqslant 1+ e_K + sv_K(\theta) \geqslant 1+ \frac{i_0}{p^k} +sv_K(\theta). \]
Therefore it suffices to prove \eqref{eq6} with $s=1$.
Using Lemma~\ref{i0bound} again we get
\begin{align*}
v_K \left(\pi_L^{p^k}\binom{p^k}{1} \theta^{1}\right)
&=v_K(\pi_L^{p^k} p^k \theta ) \\
&= 1+ ke_K + v_K(\theta) \\
&\geqslant 1 +\frac{ki_0}{p^k} +v_K(\theta).
\end{align*}
Since we are assuming $k\geqslant 2$ this gives
\eqref{eq6}.  It follows that
\begin{equation}
\label{eq7}
v_K ((-1)^{p^k}\pi_K(\ahat-a)+\pi_L^{p^k}\theta^{p^k} )= 1+ \frac{i_0}{p^k} +v_K(\theta). 
\end{equation}
Note that $\pi_K a=N_{L/K}(\pi_L)$ and $\pi_K \ahat =
N_{L/K}(\pihat_L)$. Since $\pihat_L = \pi_L (1+\theta)$ we get
$\pi_K \ahat= \pi_K a N_{L/K}(1+\theta)$, and hence
\begin{align} \nonumber
(-1)^{p^k}\pi_K(\ahat-a)+\pi_L^{p^k} \theta^{p^k}
&=(-1)^{p^k}\pi_Ka(N_{L/K}(1+\theta)-1)
+\pi_L^{p^k}\theta^{p^k} \\
&= (\pi_L \theta)^{p^k} +(-1)^{p^k} N_{L/K}(\pi_L \theta)
\label{ek} \\
&\hspace*{2.5cm}+(-1)^{p^k}\pi_K a (N_{L/K}(1+\theta) -1
-N_{L/K}(\theta)  ). \nonumber
\end{align}
By Lemmas~\ref{lemmaB}(i) and \ref{lemmaA} we have
\begin{align} \nonumber
v_K((\pi_L \theta)^{p^k}+(-1)^{p^k}N_{L/K}(\pi_L\theta))
&\geqslant \frac{i_0}{p^k} +1+p^k v_K(\theta) \\
&>1+ \frac{i_0}{p^k} +v_K(\theta) \label{vKpif} \\
v_K(\pi_Ka(N_{L/K}(1+\theta) -1 -N_{L/K}(\theta)) )
&\geqslant 1+\frac{i_0}{p^k} +v_K(\theta). \label{vKNLK}
\end{align}
Using \eqref{eq7}, (\ref{vKpif}), and (\ref{ek}) we
deduce that equality holds in (\ref{vKNLK}).  Hence by
Lemma~\ref{lemmaA} we have $v_L(\theta) \equiv b_0
\pmod{p^k}$.
\end{proof}

\begin{lemma} \label{lemmaZ}
Let $a$, $\ahat$ be as above.  Then
\begin{enumerate}[(i)]
\item $v_K(a-\ahat)\leqslant\B$.
\item If $v_K(a-\ahat)<\B$ then
$v_K(a-\ahat)\not\equiv b_0\pmod{p^k}$.
\end{enumerate}
\end{lemma}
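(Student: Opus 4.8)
The plan is to deduce both parts directly from Definition~\ref{standard}, with no further norm computations. The ingredients are that the index of inseparability $i_0=p^kA_0-b_0$ is an invariant of $L/K$, so $f$ and $\fhat$ determine the same $A_0$, $b_0$, and hence the same $\B$; that by Proposition~\ref{lemma4.6} they share the same $\omega\in\RR\smallsetminus\{0\}$; and that the uniformizer $\pi_K$ and the coset-representative sets $\Tbar_{i_0,\omega}$ have been fixed once and for all, so that $a$ and $\ahat$ are standard-form constant terms attached to exactly the same data.

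First I would expand $a$ and $\ahat$ as prescribed by Definition~\ref{standard}. If $\B\notin\Z$ write
\[a=1+\sum_{j=1}^{\lfloor\B\rfloor}\alpha_j\pi_K^j,\qquad
\ahat=1+\sum_{j=1}^{\lfloor\B\rfloor}\alphahat_j\pi_K^j,\]
with $\alpha_j,\alphahat_j\in\RR$; if $\B\in\Z$ these carry in addition terms $\gamma\pi_K^{\B}$ and $\gammahat\pi_K^{\B}$ with $\gamma,\gammahat\in T_{i_0,\omega}$. In every case $\alpha_j=\alphahat_j=0$ whenever $j\equiv b_0\pmod{p^k}$. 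Subtracting, the constant terms cancel, so $a-\ahat$ is a polynomial in $\pi_K$ supported in degrees $\leqslant\B$; since $a\ne\ahat$ it is nonzero, and therefore $v_K(a-\ahat)\leqslant\B$. This is (i).

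For (ii) I would assume $v_K(a-\ahat)<\B$. Then $a-\ahat$ is not supported solely in degree $\B$, so there is a least index $j_0$ with $\alpha_{j_0}\ne\alphahat_{j_0}$, and $j_0<\B$. Distinct Teichm\"uller representatives are incongruent modulo $\M_K$, so $\alpha_{j_0}-\alphahat_{j_0}$ is a unit of $\OO_K$, while every term of $a-\ahat$ of degree larger than $j_0$ has $K$-valuation $>j_0$ and every term of smaller degree vanishes; hence $v_K(a-\ahat)=j_0$. If $j_0\equiv b_0\pmod{p^k}$, the standard-form condition forces $\alpha_{j_0}=\alphahat_{j_0}=0$, contradicting $\alpha_{j_0}\ne\alphahat_{j_0}$. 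Therefore $v_K(a-\ahat)=j_0\not\equiv b_0\pmod{p^k}$, which is (ii).

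There is no real analytic obstacle here; the only point requiring attention is the case split in Definition~\ref{standard} between $\B\in\Z$ and $\B\notin\Z$ — in particular, checking that when $v_K(a-\ahat)<\B$ the disagreement between $a$ and $\ahat$ genuinely occurs at one of the $\alpha$-indices and is not concealed in the $\gamma\pi_K^{\B}$ term. Note that, unlike Lemma~\ref{lemmaD}, this lemma uses neither $k\geqslant2$ nor the norm estimates of Lemmas~\ref{lemmaA}--\ref{lemmaC}; those enter only afterward, when Lemmas~\ref{lemmaD} and~\ref{lemmaZ} are played against each other to force $v_L(\theta)\geqslant\B$ and so complete the proof of Proposition~\ref{unique}.
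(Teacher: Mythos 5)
Your proof is correct and follows the same route as the paper: the paper's own proof simply writes out the standard-form expansions of $a$ and $\ahat$ from Definition~\ref{standard} and says both claims follow by comparing them, which is exactly the comparison you carry out (your version just makes explicit the unit-difference of distinct Teichm\"uller representatives and the standing assumption $a\neq\ahat$). No gap; nothing further is needed.
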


\begin{proof}
By Definition~\ref{standard} we can write $\ahat$ as
\begin{equation} \label{ahat}
\ahat=\begin{cases}
\dst1+\sum_{j=1}^{\lfloor\B\rfloor}\alphahat_j
\pi_K^{j}&(\B\not\in\Z) \\[5mm]
\dst1+\left(\sum_{j=1}^{\B-1}\alphahat_j
\pi_K^{j}\right)+\gammahat\pi_K^{\B}&(\B\in\Z).
\end{cases}
\end{equation}
with $\gammahat\in T_{i_0,\omega}$, $\alphahat_j\in\RR$,
and $\alphahat_j=0$ for all $j\equiv b_0\pmod{p^k}$.
Both claims follow by comparing these formulas with the
corresponding formulas for $a$ given in
Definition~\ref{standard}.
\end{proof}

\begin{lemma}
\label{lemmaE}
With $a,\ahat$, and $\theta$ as above, we have
$v_L(\theta) =p^kv_K(\theta)= v_K(a-\ahat)$.
\end{lemma}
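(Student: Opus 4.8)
The plan is to read off the result from equation~\eqref{eq7} in the proof of Lemma~\ref{lemmaD}, using the bound $v_K(a-\ahat)\leqslant\B$ supplied by Lemma~\ref{lemmaZ}(i). Since $L/K$ is totally ramified of degree $p^k$, for any $x\in L$ we have $v_L(x)=p^kv_K(x)$; in particular the equality $v_L(\theta)=p^kv_K(\theta)$ is automatic, and the only thing requiring proof is $p^kv_K(\theta)=v_K(a-\ahat)$. Recall that \eqref{eq7} states
\[v_K\bigl((-1)^{p^k}\pi_K(\ahat-a)+\pi_L^{p^k}\theta^{p^k}\bigr)=1+\frac{i_0}{p^k}+v_K(\theta),\]
and that the two summands on the left have valuations $1+v_K(a-\ahat)$ and $1+v_L(\theta)=1+p^kv_K(\theta)$, respectively.

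First I would suppose, for contradiction, that $v_K(a-\ahat)\neq p^kv_K(\theta)$. Then the two summands above have distinct valuations, so the valuation of their sum equals $1+\min\{v_K(a-\ahat),\,p^kv_K(\theta)\}$, and this must agree with $1+i_0/p^k+v_K(\theta)$. Abbreviate $w=v_K(a-\ahat)$ and $u=v_K(\theta)$, and recall $\B=i_0/(p^k-1)$. If $w<p^ku$ then $w=i_0/p^k+u$; combined with $w\leqslant\B$ this gives $p^kw=i_0+p^ku\leqslant p^k\B$, hence $p^ku\leqslant p^k\B-i_0=\B$, hence $(p^k-1)p^ku\leqslant i_0$, which rearranges to $w=i_0/p^k+u\geqslant p^ku$, contradicting $w<p^ku$. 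If instead $w>p^ku$ then $p^ku=i_0/p^k+u$, so $(p^k-1)u=i_0/p^k$, i.e.\ $p^ku=\B$; but then $w>\B$, contradicting Lemma~\ref{lemmaZ}(i). Hence $v_K(a-\ahat)=p^kv_K(\theta)$, and together with $v_L(\theta)=p^kv_K(\theta)$ this yields the claimed chain of equalities.

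The argument is mostly bookkeeping once \eqref{eq7} is available, so the substantive content really sits in the proof of Lemma~\ref{lemmaD}. The one place to be a little careful is the case $w<p^ku$, where Lemma~\ref{lemmaZ}(i) enters only indirectly: it bounds $w$, hence $u$, from above, and this upper bound on $u$ is exactly what is needed to deduce $(p^k-1)u\leqslant i_0/p^k$ and thereby reverse the assumed inequality $w<p^ku$. I do not anticipate any genuine obstacle beyond getting this chain of elementary inequalities in the right order.
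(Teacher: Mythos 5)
Your proposal is correct and is essentially the paper's own argument in a slightly different packaging: the paper works from the rearranged identity \eqref{rewrite} (whose content is exactly what you import via \eqref{eq7}) and performs the same ultrametric comparison of $1+v_K(a-\ahat)$ with $1+p^kv_K(\theta)$, using Lemma~\ref{lemmaZ}(i) to kill the bad cases. Your trichotomy on $v_K(a-\ahat)$ versus $p^kv_K(\theta)$ is just a reorganization of the paper's case analysis on $v_K(U)$ versus $v_K(W)$, so no new idea or gap is involved.
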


\begin{proof}
We can rewrite \eqref{eq4} as
\begin{equation} \label{rewrite}
\pi_L^{p^k}\theta^{p^k}
+\pi_L^{p^k}\sum_{s=1}^{p^k-1}\binom{p^k}{s}\theta^s
+(-1)^{b_0}\omega
\pi_K^{A_0}\pi_L^{p^k-b_0}
\sum_{t=1}^{p^k-b_0}\binom{p^k-b_0}{t}\theta^t
= (-1)^{p^k} \pi_K(a-\ahat).
\end{equation}
Thus we have $U+V+W=(-1)^{p^k+1} \pi_K(a-\ahat)$, with
\begin{align*}
U&=\pi_L^{p^k}\theta^{p^k}, \\
V&= \pi_L^{p^k}\sum_{s=1}^{p^k-1}\binom{p^k}{s}\theta^s,
\\
W&=(-1)^{b_0}\omega\pi_K^{A_0}\pi_L^{p^k-b_0}
\sum_{t=1}^{p^k-b_0}\binom{p^k-b_0}{t}\theta^t.
\end{align*}
As in the proof of Lemma~\ref{lemmaD} we have
\[v_K(V)> v_K(W)=1+\frac{i_0}{p^k}+v_K(\theta).\]
We consider a few cases. Suppose that $v_K(U) < v_K(W)$.
Then $v_K(U+V+W)=v_K(U)$, and hence
\[1+v_K(a-\ahat)=v_K(\pi_K(a-\ahat))=v_K(U)=1+p^kv_K(\theta).\]
Therefore the lemma holds in this case. Suppose that
$v_K(U)=v_K(W)$.  Then
\[1+p^kv_K(\theta)=1+\frac{i_0}{p^k}+v_K(\theta).\]
It follows that
$p^kv_K(\theta)=\frac{i_0}{p^k-1}=\B$ and
$v_K(U+V+W)\geqslant v_K(U)$.  Hence
\[1+v_K(a-\ahat)
\geqslant v_K(U)=1+ p^k v_K(\theta) =1+\B.\]
By Lemma~\ref{lemmaZ}(i) we have
$v_K(a-\ahat) \leqslant \B$.  Therefore
$v_K(a-\ahat)=\B= v_L(\theta)$, and the lemma holds
in this case as well. Finally, if $v_K(U)>v_K(W)$ then
\[1+p^kv_K(\theta)>1+\frac{i_0}{p^k}+v_K(\theta),\]
and hence $p^kv_K(\theta)>\frac{i_0}{p^k-1}=\B$.
Furthermore, we have $v_K(U+V+W)=v_K(W)$, so we get
\[1+v_K(a-\ahat)=v_K(W)=1+\frac{i_0}{p^k}+v_K(\theta)
>1+\frac{i_0}{p^k}+\frac{\B}{p^k}=1+ \B,\]
which contradicts Lemma~\ref{lemmaZ}(i).  Therefore this
case does not occur. It follows that $v_K(a-\ahat)=p^k
v_K(\theta)=v_L(\theta)$ holds in all cases.
\end{proof}

It follows from Lemmas~\ref{lemmaD} and \ref{lemmaE}
that $v_K(a-\ahat) \equiv b_0 \pmod{p^k}$.  Hence by
Lemma~\ref{lemmaZ} we get $v_K(a-\ahat)=\B$.  In
particular, we have $\B\in\Z$.  Since $\gamma,\gammahat$
are chosen from the set $T_{i_0,\omega}$ of liftings to
$\RR$ of coset representatives of
$\Kbar/\psibar_{i_0,\omega}(\Kbar)$, it suffices
to show that the images of $\gamma,\gammahat$ in $\Kbar$
belong to the same coset in
$\Kbar/\psibar_{i_0,\omega}(\Kbar)$.

Recall that $i_0 =(p^k-1)\B$ and $\pihat_L
=\pi_L(1+\theta)$.  By Lemma~\ref{lemmaE} we have
\[p^kv_K(\theta)=v_L(\theta)=v_K(a-\ahat)=\B.\]
Therefore there is $y\in\OO_L^{\times}$ such that
$\theta=\pi_L^{\B}y$.  Hence by (\ref{rewrite}) we have
\[ \pi_L^{(\B+1)p^k}y^{p^k}
+\sum_{s=1}^{p^k-1}\binom{p^k}{s}\pi_L^{p^k +s\B}y^{s}
+(-1)^{b_0} \sum_{t=1}^{p^k-b_0}\binom{p^k-b_0}{t}\omega
\pi_K^{A_0} \pi_L^{p^k-b_0 +t\B}y^{t}
=(-1)^{p^k}\pi_K(a-\ahat). \] 
By dividing this equation by $\pi_L^{(\B+1)p^k}$ and
using the identity $p^kA_0-b_0=(p^k-1)\B$ we get
\begin{multline}
\label{eku}
 y^{p^k}
+\sum_{s=1}^{p^k-1}\binom{p^k}{s}\pi_L^{(s-p^k)\B}y^{s}
+(-1)^{b_0} \sum_{t=1}^{p^k-b_0}\binom{p^k-b_0}{t}\omega
\pi_K^{A_0}\pi_L^{(t-1)\B-p^kA_0}y^t \\
=(-1)^{p^k} \pi_K(a-\ahat)\pi_L^{-(\B+1)p^k}.
\end{multline}
By Lemma~\ref{i0bound} we have $e_K>\frac{i_0}{p^k}
=\frac{p^k-1}{p^k}\B$.  Therefore for $1\leqslant
s\leqslant p^k-1$ we get
\[ v_K \left(\binom{p^k}{s}\pi_L^{(s-p^k)\B}\right)
\geqslant e_K +\frac{s-p^k}{p^k}\B >
\frac{p^k-1}{p^k}\B+\frac{s-p^k}{p^k}\B
=\frac{(s-1)\B}{p^k}\geqslant0. \]
Furthermore, for $2\leqslant t \leqslant p^k-b_0$ we have
\[ v_K \left(  \binom{p^k-b_0}{t} \omega \pi_K^{A_0}
\pi_L^{(t-1)\B-p^kA_0} \right)  \geqslant \frac{(t-1)\B}{p^k} > 0. \] 
Since $\pi_L^{p^k} \equiv N_{L/K}(\pi_L)\equiv
\pi_K\pmod{\M_L}$, it follows from \eqref{eku} that
\begin{equation}
\label{eq*} 
y^{p^k} +(-1)^{b_0 +1}b_0 \omega y \equiv (-1)^{p^k}
\pi_K^{-\B}(a-\ahat)\pmod{\M_L}.
\end{equation}
Since $\B=i_0/(p^k-1)$ is an integer and
$i_0=p^kA_0-b_0$ we get $A_0\equiv b_0\pmod{p^k-1}$.
If $p>2$ then $A_0\equiv b_0\pmod2$, and hence
$(-1)^{b_0}=(-1)^{A_0}$, while for $p=2$,
$(-1)^{b_0}\equiv(-1)^{A_0}\pmod2$ holds in general.
Therefore by (\ref{eq*}) we get
\begin{alignat}{2} \nonumber
y^{p^k}-(-1)^{A_0}b_0 \omega y&\equiv (-1)^{p^k}
\pi_K^{-\B}(a-\ahat)&&\pmod{\M_L} \\
(-1)^{p^k}\psi_{i_0,\omega}(y)\pi_K^{\B}&\equiv
a-\ahat&&\pmod{\M_L^{\B+1}}. \label{ekuac}
\end{alignat}
By Definition~\ref{standard} and (\ref{ahat}) we have
\begin{align*}
a&=1+\left(\sum_{j=1}^{\B-1}\alpha_j
\pi_K^{j}\right)+\gamma\pi_K^{\B} \\
\ahat&=1+\left(\sum_{j=1}^{\B-1}\alphahat_j
\pi_K^{j}\right)+\gammahat\pi_K^{\B},
\end{align*}
with $\alpha_j,\alphahat_j\in\RR$ and
$\gamma,\gammahat\in T_{i_0,\omega}$.  In addition, we
have $\alpha_j = \alphahat_j$ for $1\leqslant j<\B$.  By
substituting these formulas into \eqref{ekuac} we obtain
\[  \gamma- \gammahat \equiv
(-1)^{p^k}\psi_{i_0,\omega}(y)\pmod{\M_L}.\]
Since $\gamma,\gammahat\in T_{i_0,\omega}$ and
$(-1)^{p^k}\psi_{i_0,\omega}(y)\in\psi_{i_0,\omega}(\Kbar)$
we get $\gamma=\gammahat$, and hence $a=\ahat$. 

This completes the proof of the
Proposition~\ref{unique}, and hence also completes the
proof of Theorem~\ref{main}.

\section{Splitting fields}
\label{splitting}

In this section we compute the splitting field in
$\Omega$ of a polynomial
\begin{equation}
\label{poly}
f(X)= X^{p^k} +(-1)^{b_0}\omega \pi_K^{A_0} X^{p^k -b_0} +(-1)^{p^k}\pi_K a 
\end{equation}
in standard form over $\OO_K$.  This allows us to
determine when the extension $L/K$ generated by a root
of $f(X)$ is Galois over $K$.

We first need to prove the following basic result.

\begin{proposition} \label{tamesplit}
Let $K$ be a local field with residue characteristic $p$
and let $n\geqslant1$ with $p\nmid n$.  Let
\begin{align*}
f(X)=X^n+\sum_{i=1}^nc_iX^{n-i}, \hspace{1cm}
g(X)=X^n+\sum_{i=1}^nd_iX^{n-i}
\end{align*}
be monic polynomials with coefficients in $K$ such that
\begin{enumerate}[(i)]
\item $v_K(c_n)=v_K(d_n)=r$ for some $r\in\Z$,
\item $c_n\equiv d_n\pmod{\M_K^{r+1}}$,
\item $v_K(c_i)>ir/n$ and $v_K(d_i)>ir/n$ for
$1\leqslant i\leqslant n-1$.
\end{enumerate}
Then the splitting field of $f(X)$ over $K$ is the same
as the splitting field of $g(X)$ over $K$.
\end{proposition}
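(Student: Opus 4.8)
The plan is to reduce the statement to a comparison of \emph{binomials}. Hypotheses (i) and (iii) force the Newton polygon of $f$ to be the single segment joining $(0,0)$ to $(n,r)$, and likewise for $g$; hence every root of $f$ or of $g$ (all of which we view inside $\Omega$) has $K$-valuation $r/n$. The first reduction is to prove that $f(X)$ and the binomial $X^n+c_n$ have the same splitting field over $K$, and likewise that $g(X)$ and $X^n+d_n$ do; the second reduction is to prove that $X^n+c_n$ and $X^n+d_n$ have the same splitting field over $K$. Chaining these equalities then gives the proposition. (Since only polynomials splitting in the separable closure $\Omega$ are under consideration, we may treat $f$ and $g$ as separable.)

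For the first reduction, fix a primitive $n$-th root of unity $\zeta\in\Omega$ and a root $\beta_0$ of $X^n+c_n$. Since $p\nmid n$ one has $v_K(\zeta^j-1)=0$ for $1\le j\le n-1$ (distinct $n$-th roots of unity stay distinct modulo $\M_K$ because $X^n-1$ is separable mod $\M_K$), so the roots $\zeta^j\beta_0$ of $X^n+c_n$ are pairwise at valuation exactly $r/n$ apart. Because $f(\alpha)=0$, hypothesis (iii) gives $v_K\bigl((X^n+c_n)(\alpha)\bigr)=v_K\bigl((X^n+c_n-f)(\alpha)\bigr)>r$ for every root $\alpha$ of $f$; the symmetric computation gives $v_K\bigl(f(\zeta^j\beta_0)\bigr)>r$. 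Writing $(X^n+c_n)(\alpha)=\prod_j(\alpha-\zeta^j\beta_0)$ and noting each factor has valuation $\ge r/n$ while the sum of their valuations exceeds $r=n\cdot(r/n)$, we see that each root $\alpha$ of $f$ satisfies $v_K(\alpha-\zeta^j\beta_0)>r/n$ for exactly one $j$; call that root $\sigma(\alpha)$. The estimate $v_K(f(\zeta^j\beta_0))>r$ shows that the map $\sigma$ is surjective, hence a bijection onto the roots of $X^n+c_n$. It follows that distinct roots $\alpha\ne\alpha'$ of $f$ satisfy $v_K(\alpha-\alpha')=r/n$, since $\sigma(\alpha)\ne\sigma(\alpha')$ are $r/n$ apart while $\alpha-\sigma(\alpha)$ and $\alpha'-\sigma(\alpha')$ have larger valuation. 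Now Krasner's lemma finishes it: if $M$ is the splitting field of $X^n+c_n$, then $\sigma(\alpha)\in M$ is strictly closer to $\alpha$ than is any conjugate of $\alpha$ over $M$ (those conjugates being among the other roots of $f$), so $\alpha\in M$; conversely, if $M'$ is the splitting field of $f$, then $\sigma^{-1}(\zeta^j\beta_0)\in M'$ is strictly closer to $\zeta^j\beta_0$ than is any other root of $X^n+c_n$, so $\zeta^j\beta_0\in M'$. Hence the two splitting fields agree. The same argument, verbatim, applies to $g$ and $X^n+d_n$.

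For the second reduction, note that $c_n/d_n\in 1+\M_K$ by (i) and (ii). Since $p\nmid n$, raising to the $n$-th power is a bijection of $1+\M_K$ onto itself: it is injective because the only $n$-th root of unity in $1+\M_K$ is $1$, and surjective by Hensel's lemma applied to $(1+X)^n-c_n/d_n$ (whose value at $X=0$ lies in $\M_K$ and whose derivative at $X=0$ is the unit $n$). So $c_n/d_n=w^n$ with $w\in 1+\M_K\subseteq K^\times$, the roots of $X^n+c_n$ are $w$ times those of $X^n+d_n$, and---$w$ being in $K^\times$---the two binomials have the same splitting field over $K$. Combining the two reductions, $f$ and $g$ have the same splitting field over $K$.

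The main obstacle is the idea behind the first reduction: one should \emph{not} try to compare $f$ and $g$ to each other directly, because although (iii) makes each of them close to a binomial, the congruence between them is imposed only on the constant term, and the roots of an arbitrary monic $g$ as in the statement are not known to be pairwise $r/n$ apart---so the root-matching bijection and Krasner's lemma would not be available. Routing both polynomials through the corresponding binomials, whose roots \emph{are} pairwise $r/n$ apart precisely because $p\nmid n$, is what makes the argument go through.
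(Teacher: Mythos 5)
Your proof is correct, but it takes a genuinely different route from the paper's. The paper compares $f$ and $g$ directly: it takes a root $\beta\in L_g$ of $g$, observes $v_K(f(\beta))>r$, rescales to $\ftilde(X)=\beta^{-n}f(\beta X)$ whose reduction is $X^n-1$ (separable because $p\nmid n$), and uses Hensel's lemma in the complete field $L_g$ to lift the simple root $1$ to a root $u$, so that $\alpha=u\beta\in L_g$ is a root of $f$; a second application of Hensel's lemma identifies $L_f=K(\alpha,\zeta_n)$, giving $L_f\subseteq L_g$, and symmetry finishes. You instead route each polynomial through its binomial approximation ($f$ versus $X^n+c_n$, $g$ versus $X^n+d_n$) via a root-matching bijection and Krasner's lemma, and then compare the two binomials by extracting an $n$-th root of the $1$-unit $c_n/d_n$ with Hensel. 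Both arguments hinge on tameness ($p\nmid n$): in the paper it makes the reduced polynomial $X^n-1$ separable so Hensel applies; in yours it makes the roots $\zeta^j\beta_0$ pairwise at distance exactly $r/n$ so Krasner applies. Your route is a bit longer but yields extra information along the way: it shows the distinct roots of $f$ are pairwise at distance exactly $r/n$, and your first reduction essentially re-proves Corollary~\ref{kummer} (the splitting field of $f$ equals that of $X^n+c_n$), which the paper instead deduces as a corollary of Proposition~\ref{tamesplit}. Two small remarks: your closing claim that one ``should not'' compare $f$ and $g$ directly is overstated --- direct comparison is exactly what the paper does, only with Hensel's lemma in place of Krasner's; and your parenthetical restriction to separable $f,g$ is unnecessary, since separability is automatic from the hypotheses (your own surjectivity argument forces $f$ to have $n$ distinct roots, as does the paper's Hensel argument), so nothing is lost there.
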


\begin{proof}
Let $L_f$, $L_g$ denote the splitting fields of $f(X)$,
$g(X)$ over $K$.  Let $\beta\in L_g$ be a root of
$g(X)$.  Then by the assumptions on $v_K(d_i)$ we have
$v_K(\beta)=r/n$.  Hence
\[f(\beta)=f(\beta)-g(\beta)
=\sum_{i=1}^n(c_i-d_i)\beta^{n-i}\]
satisfies $v_K(f(\beta))>r$.  Set
$\ftilde(X)=\beta^{-n}f(\beta X)$.  Then
$\ftilde(X)\in K(\beta)[X]$ is monic, and for
$1\leqslant i\leqslant n-1$ the coefficient
$\ctilde_i=\beta^{-i}c_i$ of $X^{n-i}$ in $\ftilde(X)$
has positive valuation.  Furthermore, we have
$v_K(\ctilde_n)=0$ and $v_K(\ftilde(1))>0$.  Since
$p\nmid n$ it follows by Hensel's lemma that there is
$u\in L_g$ with $u\equiv1\pmod{\M_{L_g}}$ and
$\ftilde(u)=0$.  Set $\alpha=u\beta$.  Then
$\alpha\in L_g$, and since $f(\alpha)=0$ we also have
$\alpha\in L_f$.  Set $\fhat(X)=\alpha^{-n}f(\alpha X)$.
Then $L_f$ is the splitting field of $\fhat(X)$ over
$K(\alpha)$.
As above we see that for $1\leqslant i\leqslant n-1$
the coefficient $\chat_i=\alpha^{-i}c_i$ of $X^{n-i}$ in
$\fhat(X)$ has positive valuation.  Since $\fhat(X)$ is
monic and $\fhat(1)=0$, the constant term $\chat_n$ of
$\fhat(X)$ satisfies $\chat_n\equiv-1\pmod{\M_{L_f}}$.
Since $p\nmid n$ it follows by Hensel's lemma that
$L_f=K(\alpha,\zeta_n)$, where $\zeta_n\in\Omega$ is a
primitive $n$th root of unity.  By symmetry we get
$\zeta_n\in L_g$, and hence $L_f\subset L_g$.  Using
symmetry once again we get $L_g\subset L_f$, and hence
$L_f=L_g$.
\end{proof}

\begin{corollary} \label{kummer}
Let $K$ be a local field with residue characteristic $p$
and let $n\geqslant1$ with $p\nmid n$.  Let
\[f(X)=X^n+\sum_{i=1}^nc_iX^{n-i}\in K[X],\]
and assume there is $r\in\Z$ such that $v_K(c_n)=r$ and
$v_K(c_j)>jr/n$ for $1\leqslant j\leqslant n-1$.  Let
$a\in K$ with $a\equiv c_n\pmod{\M_K^{r+1}}$.  Then the
splitting field of $f(X)$ over $K$ is
$K(\zeta_n,\sqrt[n]{-a})$.
\end{corollary}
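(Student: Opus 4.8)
The plan is to deduce this directly from Proposition~\ref{tamesplit} by comparing $f(X)$ with the binomial $g(X)=X^n+a$. First I would set $d_n=a$ and $d_i=0$ for $1\leqslant i\leqslant n-1$, so that $g(X)=X^n+\sum_{i=1}^n d_iX^{n-i}=X^n+a$. I then verify the three hypotheses of Proposition~\ref{tamesplit}. Since $a\equiv c_n\pmod{\M_K^{r+1}}$ we have $v_K(a-c_n)\geqslant r+1>r=v_K(c_n)$, so $v_K(a)=v_K(c_n)=r$; this gives hypothesis~(i). Hypothesis~(ii) is precisely the assumed congruence $a\equiv c_n\pmod{\M_K^{r+1}}$. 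Hypothesis~(iii) holds because $v_K(c_i)>ir/n$ by assumption, while $v_K(d_i)=v_K(0)=\infty>ir/n$ for $1\leqslant i\leqslant n-1$. Proposition~\ref{tamesplit} then shows that $f(X)$ and $g(X)$ have the same splitting field over $K$.

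It remains to identify the splitting field of $g(X)=X^n+a=X^n-(-a)$ over $K$. Fix a root $\alpha\in\Omega$ with $\alpha^n=-a$, and write $\alpha=\sqrt[n]{-a}$. Since $p\nmid n$, the polynomial $X^n-(-a)$ is separable and a primitive $n$th root of unity $\zeta_n$ lies in $\Omega$; the complete set of roots of $g(X)$ is $\{\zeta_n^{\,j}\alpha:0\leqslant j\leqslant n-1\}$. Hence the splitting field of $g(X)$ over $K$ is $K(\alpha,\zeta_n)=K(\zeta_n,\sqrt[n]{-a})$, and the corollary follows.

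I do not expect any genuine obstacle here: all the substance is carried by Proposition~\ref{tamesplit}, and the remaining verifications are routine. The only point needing a word of care is the equality $v_K(a)=r$, which is immediate from the congruence $a\equiv c_n\pmod{\M_K^{r+1}}$ together with $v_K(c_n)=r$ and the non-archimedean property of $v_K$.
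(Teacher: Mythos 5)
Your proof is correct and follows the paper's own route exactly: the paper likewise proves this corollary by applying Proposition~\ref{tamesplit} with $g(X)=X^n+a$, with the remaining verifications (that $v_K(a)=r$, that the hypotheses hold, and that the splitting field of $X^n+a$ is $K(\zeta_n,\sqrt[n]{-a})$) being the routine details you spelled out.
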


\begin{proof}
Apply the proposition with $g(X)=X^n+a$.
\end{proof}

\begin{proposition}
The splitting field over $K$ of the polynomial $f(X)$
from (\ref{poly}) is
\[ 
K\left(\pi_L, \zeta_{p^k-1},
\sqrt[p^k-1]{(-1)^{b_0}b_0\omega \pi_K^{p^k-b_0+A_0-1}}\right),
 \]
where $\zeta_{p^k-1}$ is a primitive $p^k-1$ root of
unity in the separable closure $\Omega$ of $K$. 
\end{proposition}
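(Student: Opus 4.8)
The plan is to pass to $L=K(\pi_L)$ for a root $\pi_L$ of $f$, and to reduce the computation to the tame situation governed by Corollary~\ref{kummer}. Since $\pi_L\in L$, the splitting field of $f$ over $K$ equals the splitting field over $L$ of
\[ H(W):=\frac{f(\pi_L(1+W))}{\pi_L^{p^k}\,W}\in\OO_L[W], \]
a monic polynomial of degree $p^k-1$ whose roots are the elements $\pi/\pi_L-1$ as $\pi$ ranges over the roots of $f$ other than $\pi_L$. Expanding $f$ in a Taylor series about $\pi_L$ and using $f(\pi_L)=0$, one obtains the explicit formula
\[ H(W)=\sum_{l=0}^{p^k-1}h_l\,W^l,\qquad
h_l=\binom{p^k}{l+1}+(-1)^{b_0}\omega\,\pi_K^{A_0}\binom{p^k-b_0}{l+1}\pi_L^{-b_0}. \]

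I would then estimate the valuations of the coefficients $h_l$. Since $p\mid\binom{p^k}{j}$ for $0<j<p^k$ and $p\nmid p^k-b_0$, the integer summand in $h_l$ has $v_L\geqslant p^ke_K$ for $l<p^k-1$, while the second summand has $v_L\geqslant i_0$, with equality exactly when $p\nmid\binom{p^k-b_0}{l+1}$. Using Lemma~\ref{i0bound} (hence $i_0<p^ke_K$) together with $i_0=(p^k-1)\B$, this yields $h_{p^k-1}=1$, $v_L(h_0)=i_0$, $h_0\equiv(-1)^{b_0+1}b_0\,\omega\,\pi_K^{A_0}\pi_L^{-b_0}\pmod{\M_L^{i_0+1}}$, and $v_L(h_l)>(p^k-1-l)\B$ for $1\leqslant l\leqslant p^k-2$. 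These are exactly the hypotheses of Corollary~\ref{kummer} applied to $H$ over the local field $L$, with $n=p^k-1$ (note $p\nmid p^k-1$) and $r=i_0$. Consequently the splitting field of $f$ over $K$ equals
\[ L\Bigl(\zeta_{p^k-1},\ \sqrt[p^k-1]{(-1)^{b_0}b_0\,\omega\,\pi_K^{A_0}\pi_L^{-b_0}}\Bigr). \]

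Finally I would bring the radicand into the stated shape by a Kummer-theoretic adjustment. From $N_{L/K}(\pi_L)=\pi_K a$ with $a\equiv1\pmod{\M_K}$ and Lemma~\ref{lemmaB}(ii) one gets $\pi_L^{p^k}=\pi_K v$ for some $v\in1+\M_L$. Since $p\nmid p^k-1$, raising to the $(p^k-1)$st power is an automorphism of the pro-$p$ group $1+\M_L$, so $1+\M_L\subseteq(L^\times)^{p^k-1}$. The two radicands $(-1)^{b_0}b_0\omega\pi_K^{A_0}\pi_L^{-b_0}$ and $(-1)^{b_0}b_0\omega\pi_K^{p^k-b_0+A_0-1}$ differ by the factor
\[ \pi_L^{-b_0}\pi_K^{\,b_0+1-p^k}=\pi_L^{-b_0}\bigl(\pi_L^{p^k}v^{-1}\bigr)^{b_0+1-p^k}
=\bigl(\pi_L^{\,b_0-p^k}\bigr)^{p^k-1}\,v^{\,p^k-1-b_0}\in(L^\times)^{p^k-1}, \]
so they generate the same extension of $L$. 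Since $L=K(\pi_L)$, this gives the asserted splitting field $K(\pi_L,\zeta_{p^k-1},\sqrt[p^k-1]{(-1)^{b_0}b_0\omega\pi_K^{p^k-b_0+A_0-1}})$.

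The conceptual move — dividing out the root $\pi_L$ and rescaling to obtain a degree-$(p^k-1)$, hence tame, polynomial over $L$ — is short, and I expect the main obstacle to be the valuation bookkeeping in the middle step: checking the \emph{strict} inequalities $v_L(h_l)>(p^k-1-l)\B$ required by Corollary~\ref{kummer}, which needs the cases $\binom{p^k-b_0}{l+1}\equiv0$ versus $\not\equiv0\pmod p$ (and $\operatorname{char}K=0$ versus $\operatorname{char}K=p$) to be handled separately, followed by the routine Kummer-theoretic tidying of the radicand.
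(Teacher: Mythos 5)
Your proof is correct and follows essentially the same route as the paper: divide $f(\pi_L(1+W))$ by $\pi_L^{p^k}W$ to obtain a monic degree-$(p^k-1)$ polynomial over $L$, check the valuation hypotheses using Lemma~\ref{i0bound} (they do hold, since every coefficient $h_l$ with $1\leqslant l\leqslant p^k-2$ has $v_L\geqslant i_0>(p^k-1-l)\B$), and apply Corollary~\ref{kummer}. The only difference is cosmetic: the paper rescales the variable by $\pi_L^{b_0-p^k}$ so that Corollary~\ref{kummer} produces the stated radicand directly, whereas you first get the radicand $(-1)^{b_0}b_0\omega\pi_K^{A_0}\pi_L^{-b_0}$ and then convert it to $(-1)^{b_0}b_0\omega\pi_K^{p^k-b_0+A_0-1}$ using $\pi_L^{p^k}=\pi_K v$ with $v\in1+\M_L\subseteq(L^\times)^{p^k-1}$, a valid Kummer-theoretic adjustment (justified by Hensel's lemma since $p\nmid p^k-1$).
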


\begin{proof}
Let $M$ be the splitting field of $f(X)$ over $K$, let
$\pi_L\in M$ be a root of $f(X)$, and set $L=K(\pi_L)$.
Let $g(Y)=f(\pi_L+\pi_LY)\in L[X]$.  Then the splitting
field of $g(Y)$ over $L$ is $M$.  Since $f(\pi_L)=0$ we
get
\begin{align*}
g(Y)&=\pi_L^{p^k}(1+Y)^{p^k} +(-1)^{b_0}
\omega\pi_K^{A_0} \pi_L^{p^k-b_0}
(1+Y)^{p^k-b_0} +(-1)^{p^k}\pi_K a \\
&=\pi_L^{p^k}\sum_{s=1}^{p^k}\binom{p^k}{s}
Y^s+(-1)^{b_0}\omega\pi_K^{A_0} \pi_L^{p^k-b_0}
\sum_{t=1}^{p^k-b_0}\binom{p^k-b_0}{t}Y^t.
\end{align*}
Set $h(Y)=\pi_L^{-p^k}Y^{-1}g(Y)$.  Then
\begin{align*}
h(Y)&=\sum_{s=0}^{p^k-1}\binom{p^k}{s+1}
Y^s+(-1)^{b_0}\omega\pi_K^{A_0} \pi_L^{-b_0}
\sum_{t=0}^{p^k-b_0-1}\binom{p^k-b_0}{t+1}Y^t
\end{align*}
is a monic polynomial over $L$ of degree $p^k-1$ whose
splitting field is $M$.  Therefore we can write
\[h(Y)=Y^{p^k-1}+\sum_{j=1}^{p^k-1}a_jY^{p^k-1-j}\]
with $a_j\in L$.  The constant term of $h(Y)$ is
\begin{align*}
a_{p^k-1}
&=p^k+(p^k-b_0)(-1)^{b_0}\omega\pi_K^{A_0}\pi_L^{-b_0}.
\end{align*}
Hence by Lemma~\ref{i0bound} we get
$v_L(a_{p^k-1})=p^kA_0-b_0=i_0$ and
\begin{equation} \label{apk1cong}
a_{p^k-1}\equiv-b_0(-1)^{b_0}\omega\pi_K^{A_0}\pi_L^{-b_0}
\pmod{\M_L^{p^kA_0-b_0+1}}.
\end{equation}
On the other hand, for $0\leqslant s\leqslant p^k-2$ we
have
\begin{align*}
v_L\left(\binom{p^k}{s+1}\right)
&\geqslant p^ke_K>i_0,
\end{align*}
again by Lemma~\ref{i0bound}.  For
$0\leqslant t\leqslant p^k-b_0-1$ we have
\begin{align*}
v_L\left((-1)^{b_0}\omega\pi_K^{A_0} \pi_L^{-b_0}
\binom{p^k-b_0}{t+1}\right)
&\geqslant p^kA_0-b_0=i_0.
\end{align*}
Hence $v_L(a_j)\geqslant i_0$ for
$0\leqslant j\leqslant p^k-2$.

Now define $\hhat(Y)=\pi_L^{(p^k-1)(p^k-b_0)}
h(\pi_L^{b_0-p^k}Y)$ and write
\[\hhat(Y)=Y^{p^k-1}+\sum_{j=1}^{p^k-1}\ahat_jX^{p^k-1-j}.\]
Using (\ref{apk1cong}) and the congruence
$\pi_L^{p^k}\equiv\pi_K\pmod{\M_L^{p^k+1}}$ we get
\begin{alignat*}{2}
\ahat_{p^k-1}&=\pi_L^{(p^k-1)(p^k-b_0)}a_{p^k-1} \\
&\equiv-b_0(-1)^{b_0}\omega\pi_K^{A_0}
\pi_L^{p^{2k}-p^k-p^kb_0}
&&\pmod{\M_L^{p^k(A_0+p^k-1-b_0)+1}} \\
&\equiv-b_0(-1)^{b_0}\omega\pi_K^{A_0+p^k-1-b_0}
&&\pmod{\M_L^{p^k(A_0+p^k-1-b_0)+1}}.
\end{alignat*}
For $1\leqslant j\leqslant p^k-2$ we have
\[\frac{v_L(a_j)}{j}\geqslant\frac{i_0}{j}
>\frac{i_0}{p^k-1}=\frac{v_L(a_{p^k-1})}{p^k-1}.\]
It follows that $v_L(\ahat_j)/j>
v_L(\ahat_{p^k-1})/(p^k-1)$, so $\hhat(Y)$ satisfies the
hypotheses of Corollary~\ref{kummer}.  Since $M$ is the
splitting field of $\hhat(Y)$ over $L$ we get
\begin{align*}
M&=L\left(\zeta_{p^k-1},\sqrt[p^k-1]{(-1)^{b_0}
b_0\omega\pi_K^{p^k-b_0+A_0-1}}\right) \\
&=K\left(\pi_L,\zeta_{p^k-1},\sqrt[p^k-1]{(-1)^{b_0}
b_0\omega\pi_K^{p^k-b_0+A_0-1}}\right). \qedhere
\end{align*}
\end{proof}

\begin{corollary} \label{Gal}
Let $L$ be generated over $K$ by a root of the
polynomial (\ref{poly}).  Then $L/K$ is Galois if and
only if $\zeta_{p^k-1}\in K$ and there is a $p^k-1$
root of $(-1)^{b_0}b_0\omega\pi_K^{p^k-b_0+A_0-1}$ in
$K$.  In this case $\Gal(L/K)$ is an elementary abelian
$p$-group of rank $k$.
\end{corollary}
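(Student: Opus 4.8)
The plan is to read the corollary off from the preceding Proposition, which identifies the splitting field $M$ of $f(X)$ over $K$ as $M=K\bigl(\pi_L,\zeta_{p^k-1},\delta\bigr)$, where $\pi_L$ is a root of $f(X)$, $L=K(\pi_L)$, and $\delta$ is a fixed $(p^k-1)$th root of $r:=(-1)^{b_0}b_0\omega\pi_K^{p^k-b_0+A_0-1}$. Since $L/K$ is separable, so that $f(X)$ is a separable polynomial, and $L=K(\pi_L)$, the extension $L/K$ is Galois if and only if it is normal, namely if and only if $f(X)$ splits in $L$; by the description of $M$ this holds exactly when $L=M$, that is, when $\zeta_{p^k-1}\in L$ and $\delta\in L$. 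So the first step is to establish the equivalence: $L/K$ is Galois if and only if $\zeta_{p^k-1}\in L$ and $\delta\in L$.

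Next I would reduce these two membership conditions from $L$ to $K$, exploiting that $L/K$ is totally ramified while both auxiliary extensions are tame. Since $p\nmid p^k-1$, the extension $K(\zeta_{p^k-1})/K$ is unramified; as any subextension of the totally ramified extension $L/K$ is totally ramified, and an extension that is simultaneously unramified and totally ramified is trivial, we get $\zeta_{p^k-1}\in L$ if and only if $\zeta_{p^k-1}\in K$. Assuming now $\zeta_{p^k-1}\in K$, the extension $K(\delta)/K$ is a Kummer extension of degree dividing $p^k-1$; if in addition $\delta\in L$, then $[K(\delta):K]$ divides $\gcd(p^k-1,p^k)=1$, forcing $\delta\in K$, and the converse is obvious. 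Combining these, $L/K$ is Galois precisely when $\zeta_{p^k-1}\in K$ and $r$ has a $(p^k-1)$th root in $K$, which is the asserted criterion. I expect the only point needing any care here to be the order in which the two conditions are handled: one must dispose of $\zeta_{p^k-1}$ first in order to know that $K(\delta)/K$ is tame.

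Finally, assuming $L/K$ is Galois, set $G=\Gal(L/K)$, a group of order $p^k$. I would argue via the lower-numbering ramification filtration: since $L/K$ is totally ramified, $G_0=G$, and since $|G|$ is a power of $p$ the wild inertia subgroup is all of $G$, so $G_1=G$. By Proposition~\ref{break}, $L/K$ has a unique ramification break $\B$; since $G_1=G\ne\{1\}$ we must have $\B\geqslant 1$, so there is an integer $b\geqslant 1$ with $G_b=G$ and $G_{b+1}=\{1\}$. The standard injective homomorphism $G_b/G_{b+1}\hookrightarrow U_L^{(b)}/U_L^{(b+1)}\cong\Kbar^{+}$ (induced by $\sigma\mapsto\sigma(\pi_L)/\pi_L$) then exhibits $G\cong G_b/G_{b+1}$ as a subgroup of the additive group of $\Kbar$, which is an elementary abelian $p$-group; hence $G$ is elementary abelian, and comparing cardinalities shows $\Gal(L/K)$ has rank $k$. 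The substance of this corollary lies in the splitting-field computation already carried out; what remains, as above, is routine ramification bookkeeping, so I do not foresee a serious obstacle.
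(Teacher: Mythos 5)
Your proof is correct and follows essentially the same route as the paper: read the criterion off the splitting-field proposition, use tame (unramified/Kummer) versus totally wildly ramified to pull the conditions $\zeta_{p^k-1}\in L$ and $\delta\in L$ down to $K$, and deduce elementary abelianness from the unique ramification break. The paper merely compresses your two membership reductions into the single statement that the tamely ramified field $F=K(\zeta_{p^k-1},\delta)$ lies in $L$ only if $F=K$, and leaves the standard embedding $G_{\B}/G_{\B+1}\hookrightarrow U_L^{(\B)}/U_L^{(\B+1)}$ implicit.
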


\begin{proof}
Let $F$ be the splitting field of
$g(X)=X^{p^k-1}-(-1)^{b_0}b_0\omega\pi_K^{p^k-b_0+A_0-1}$
over $K$.  It follows from the proposition that $L/K$ is
Galois if and only if $F\subset L$.  Since $F/K$ is at
most tamely ramified, and $L/K$ is totally wildly
ramified, this holds if and only if $F=K$.  This proves
the first claim.  Suppose $L/K$ is Galois.  By
Proposition~\ref{break}(ii), $\B=\frac{i_0}{p^k-1}$ is the only
ramification break of $L/K$.  Hence $\Gal(L/K)$ is an
elementary abelian $p$-group.
\end{proof}

\section{Some examples}
\label{examples}

In this section we give an example which illustrates
Theorem~\ref{main}.  We also give examples which rule
out some plausible approaches to generalizing this
theorem.

\subsection{Classifying extensions with specified
indices of inseparability}

Let $K=\Q_3(\zeta_8)$ be the unramified extension of
$\Q_3$ of degree 2.  In this example we determine all
totally ramified extensions $L/K$ of degree $3^2=9$ such
that $i_0=i_1=8$ and $i_2=0$.  Since $8=3^2\cdot1-1$ we
get $A_0=b_0=1$ in this case.  It follows from
Theorem~\ref{main} that the isomorphism classes of such
extensions are generated by roots of polynomials of the
form $X^9-3\omega X^8-3a$, where
$\omega\in\RR\smallsetminus\{0\}
=\langle\zeta_8\rangle$ and $a=1+3\gamma$ with
$\gamma\in T_{8,\omega}$.  We have
$\psi_{8,\omega}=X^9+\omega X$.  Since $\Kbar\cong\F_9$
it follows that $\psibar_{8,-1}(\alpha)=0$ for all
$\alpha\in\Kbar$.  Hence $\Tbar_{8,-1}=\Kbar$, so
$T_{8,-1}=\RR$.  Now suppose
$\omega\in\langle\zeta_8\rangle$ with $\omega\not=-1$.
Then the only root in $\Kbar$ of
$\psibar_{8,\omega}$ is 0, so
$\psibar_{8,\omega}$ is onto.  Therefore we can
take $\Tbar_{8,\omega}=\{0\}$, and hence
$T_{8,\omega}=\{0\}$.  It follows that the totally
ramified extensions of $K$ of degree 9 such that
$i_0=i_1=8$ are precisely
those generated by a root of one of the 16 polynomials
\begin{alignat}{2} \label{minusone}
&X^9+3X^8-3(1+3\gamma)\;\;\;&&(\gamma\in\RR), \\
&X^9-3\omega X^8-3&&(\omega\in\langle\zeta_8\rangle,\;
\omega\not=-1). \label{omega}
\end{alignat}

We can ask which of these extensions are Galois.  It
follows from Corollary~\ref{Gal} that we get a
Galois extension if and only if
$(-1)^{1+1}(9-1)\omega\cdot3^{9-1+1-1}=8\omega\cdot3^8$
is an 8th power in $K$.  By Hensel's lemma this is
equivalent to $-\overline{\omega}$ being an 8th power in
$\Kbar$, which holds if and only if $\omega=-1$.  Hence
the 9 Eisenstein polynomials in (\ref{minusone}) give
Galois extensions, while the 7 polynomials in
(\ref{omega}) do not.  If $L/K$ is the Galois extension
associated to a polynomial from (\ref{minusone}) then
$\Gal(L/K)\cong(\Z/3\Z)^2$ by Corollary~\ref{Gal}.

\subsection{Extensions with $d$ indices of inseparability} 

Let $L/K$ be a separable totally ramified extension of
local fields of degree $p^k$.  If $L/K$ has just two
distinct indices of inseparability then $L$ is generated
over $K$ by a root of an Eisenstein polynomial with
three terms.  One might hope that in the cases where
$L/K$ has $d\geqslant 3$ distinct indices of
inseparability then $L$ can be generated over $K$ by a
root of an Eisenstein polynomial with $d+1$ terms.  This
does not hold in general, as we now demonstrate.

Let $K$ be a local field with residue characteristic 3
and let $\pi_K$ be a uniformizer for $K$.  Let $L/K$ be
an extension obtained by adjoining to $K$ a root $\pi_L$
of the Eisenstein polynomial
\begin{align*}
f(X)&=X^9 + \pi_K X^7 -\pi_K X^6 +\pi_K X^{3} -\pi_K \\
&=X^9+\sum_{i=1}^9(-1)^ic_iX^{9-i}.
\end{align*}
Then $L/K$ is a totally ramified extension of degree $9$
which has three distinct indices of inseparability:
$i_0=7$, $i_1=3$, and $i_2=0$. We show that $L$ is not
generated over $K$ by a root of an Eisenstein polynomial
with fewer than five nonzero terms.

Let $\pitilde_L$ be a uniformizer for $L$.  Then there
are $r_0,r_1\in\RR$ with $r_0\not=0$ such that
\begin{equation} \label{piLcong}
\pitilde_L\equiv r_0\pi_L+r_1r_0^2\pi_L^2
\pmod{\M_L^3}.
\end{equation}
Let $\pihat_L=r_0\pi_L$.  Then the minimum polynomial
for $\pihat_L$ over $K$ is
\begin{align*}
\fhat(X)
&=X^9+r_0^2\pi_K X^7-r_0^3\pi_K X^6
+r_0^6\pi_K X^{3} -r_0^9\pi_K \\
&=X^9+\sum_{i=1}^9(-1)^i\chat_iX^{9-i}.
\end{align*}
Furthermore, we have
\begin{equation} \label{piLtcong}
\pitilde_L\equiv\pihat_L+r_1\pihat_L^2\pmod{\M_L^3}.
\end{equation}
We use Theorem~\ref{perturbation} with $\ell=1$ to
obtain congruences for the coefficients of the minimum
polynomial
\[\ftilde(X)=X^9+\sum_{i=1}^9(-1)^i\ctilde_iX^{9-i}\]
for $\pitilde_L$ over $K$.  The values of $j$ that
satisfy the hypothesis of the theorem are $j=0$ and
$j=1$.  Taking $j=0$ we get $\phi_0(1)=8$, $h=h_0=1$,
$t=1$, $S_0=\{0\}$, and $A_0=b_0=1$.  Therefore
\begin{alignat*}{2}
\ctilde_1&\equiv\chat_1+(-1)^{1+1+1}(1+1)r_1\chat_2
&&\pmod{\M_K^2} \\
&\equiv r_1r_0^2\pi_K&&\pmod{\M_K^2}.
\end{alignat*}
When $j=1$ we have $\phi_1(1)=6$, $h=3$, $h_0=1$, $t=1$,
$S_1=\{1\}$, $A_1=1$, and $b_1=3$.  Therefore we get
\begin{alignat*}{2}
\ctilde_3&\equiv\chat_3+(-1)^{1+1+1}(1+1)r_1^3\chat_6
&&\pmod{\M_K^2} \\
&\equiv r_0^3\pi_K + r_1^3r_0^6\pi_K&&\pmod{\M_K^2}. 
\end{alignat*}
Hence if $r_1\not=0$ then $v_K(\ctilde_1)=1$, while if
$r_1=0$ then $v_K(\ctilde_3)=1$.  In particular, at
least one of $\ctilde_1,\ctilde_3$ is nonzero.  Since
$i_0=7$ we have $\ctilde_2\not=0$, and since $i_1=3$ we
have $\ctilde_6\not=0$.  Together with $X^9$ and
$\ctilde_9$ this gives at least five nonzero terms in
$\ftilde(X)$, as claimed.

\subsection{Extensions of degree $n\neq p^k$} 
One might hope to generalize Theorem~\ref{main}, or at
least Corollary~\ref{main-cor}, to include totally
ramified extensions $L/K$ whose degree $n=[L:K]$ is not
a power of $p$.  The following example shows that such a
generalization is not possible, even in the case
$v_p(n)=1$.

Let $K$ be a local field with residue characteristic 3
and let $\pi_L$ be a uniformizer for $K$.  Let $L/K$ be a
totally ramified extension of degree $6$ obtained by
adjoining to $K$ a root of the Eisenstein polynomial
\begin{align} \label{deg6}
f(X)&=X^{6} -\pi_K X^5 +\pi_K X^{4}   + \pi_K \\
&=X^6+\sum_{i=1}^6(-1)^ic_iX^{6-i}. \nonumber
\end{align}
Then $L/K$ is totally ramified of degree 6 and has
indices of inseparability $i_1=0$ and $i_0=4$. We show
that there does not exist a uniformizer for $L$ whose
minimum polynomial over $K$ has fewer than four nonzero
terms.

Let  $\pitilde_L$ be a uniformizer for $L$ whose minimum polynomial is given by 
\[\ftilde(X)=X^6+\sum_{i=1}^6
(-1)^i\ctilde_iX^{6-i}.\]
As in the previous
example there are $r_0,r_1\in\RR$ such that
$\pitilde_L$ satisfies (\ref{piLcong}).  We find
that the minimum polynomial for
$\pihat_L=r_0\pi_L$ over $K$ is
\begin{align*}
\fhat(X)
&=X^6-r_0\pi_K X^5+r_0^2\pi_K X^4+r_0^6\pi_K \\
&=X^6+\sum_{i=1}^6(-1)^i\chat_iX^{6-i},
\end{align*}
and the congruence (\ref{piLtcong}) from the previous
example holds.  We apply Theorem~\ref{perturbation} with
$\ell=1$ to obtain congruences for the coefficients of
the minimum polynomial
\[\ftilde(X)=X^6+\sum_{i=1}^6
(-1)^i\ctilde_iX^{6-i}\]
for $\pitilde_L$ over $K$.  Once again, the values of
$j$ that satisfy the hypothesis of the theorem are
$j=0$ and $j=1$.  Taking $j=0$ gives $\phi_0(1)=5$,
$h=h_0=1$, $t=1$, $S_0=\{0\}$, $A_0=1$, and $b_0=2$.
Therefore we get
\begin{alignat*}{2}
\ctilde_1&\equiv\chat_1+(-1)^{1+1+1}(1+1)r_1\chat_2
&&\pmod{\M_K^2} \\
&\equiv r_0\pi_K+r_1r_0^2\pi_K&&\pmod{\M_K^2}.
\end{alignat*}
When $j=1$ we have $\phi_1(1)=3$, $h=3$, $h_0=1$, $t=1$,
$S_1=\{1\}$, $A_1=1$, and $b_1=6$. Therefore we get
\begin{alignat*}{2}
\ctilde_3&\equiv\chat_3+(-1)^{1+1+1}\cdot2r_1^3\chat_6
&&\pmod{\M_K^2} \\
&\equiv r_1^3r_0^6\pi_K&&\pmod{\M_K^2}. 
\end{alignat*}
Hence if $r_1=0$ then $v_K(\ctilde_1)=1$, while if
$r_1\not=0$ then $v_K(\ctilde_3)=1$.  In particular, at
least one of $\ctilde_1$, $\ctilde_3$ is nonzero.  Since
$i_0=4$ we have $\ctilde_2\not=0$.  Together with $X^6$
and $\ctilde_6$ this gives at least four nonzero terms in
$\ftilde(X)$, as claimed.

\subsection{Indices of inseparability and the number of
terms in an Eisenstein polynomial}

Here we give an example that shows that there are
totally ramified extensions $L/K$ and $M/K$ of degree
$p^k$ with the same indices of inseparability such that
$M$ is generated over $K$ by a root of an Eisenstein
polynomial with three nonzero terms, but $L$ is not.
This implies that the indices of inseparability by
themselves are not enough to characterize totally
ramified extensions of degree $p^k$ which are generated
by a root of a 3-term Eisenstein polynomial.

Let $p$ be an odd prime and let $K$ be a finite
extension of $\mathbb{Q}_p$. Let
\begin{align*}
f(X) &= X^{p^2} +(-1)^{b_1-1}\pi_K^{A_1+e_K} X^{p^2 -b_1 +1}
+(-1)^{b_1}\pi_K^{A_1} X^{p^2-b_1} -\pi_K \\
g(X) &=X^{p^2} +(-1)^{b_1}\pi_K^{A_1} X^{p^2 -b_1} -\pi_K
\end{align*}
with $p \mid b_1$, $p<b_1 <p^2$, and $1\leqslant A_1 \leqslant
e_K$. Let $L$ be the totally ramified extension of $K$
obtained by adjoining a root of $f(X)$ to $K$ and let
$M/K$ be the totally ramified extension obtained by
adjoining a root of $g(X)$ to $K$. Then $L/K$ and $M/K$
have the same indices of inseparability:
\[ i_0 = p^2(A_1 +e_K)-b_1,\; i_1= p^2 A_1 -b_1,\; i_2=0.\]
We claim that $L$ is not generated over $K$ by a root of
a three-term Eisenstein polynomial.

Suppose there exists a uniformizer $\pitilde_L$
for $L$ whose minimum polynomial
\[\ftilde(X)=X^{p^2}+\sum_{i=1}^{p^2}
(-1)^i\ctilde_iX^{p^2-i}\]
has just three nonzero terms.  Since $i_1=p^2A_1-b_1$ is
an index of inseparability of $L/K$, $\ftilde(X)$ must
have the form
\begin{equation} \label{f3}
\ftilde(X)=X^{p^2}
+(-1)^{b_1}\ctilde_{b_1}X^{p^2-b_1}
-\ctilde_{p^2}
\end{equation}
for some $\ctilde_{b_1},\ctilde_{p^2}\in K$
with $v_K(\ctilde_{b_1})=A_1$ and
$v_K(\ctilde_{p^2})=1$.  Let $\pi_L$ be a root of $f(X)$
which generates $L$ over $K$.  As in the previous
examples there are $r_0,r_1\in\RR$ such that
$\pitilde_L$ satisfies (\ref{piLcong}).  We find that
the minimum polynomial for $\pihat_L=r_0\pi_L$ over $K$
is
\begin{align*}
\fhat(X)
&=X^{p^2}+(-1)^{b_1-1}r_0^{b_1-1}\pi_K^{A_1+e_K}X^{p^2-b_1+1}
+(-1)^{b_1}r_0^{b_1}\pi_K^{A_1}X^{p^2-b_1}-r_0^{p^2}\pi_K \\
&=X^{p^2}+\sum_{i=1}^{p^2}(-1)^i\chat_iX^{{p^2}-i},
\end{align*}
and the congruence (\ref{piLtcong}) holds.  We apply
Theorem~\ref{perturbation} with $\ell=1$ to obtain
congruences for the coefficients of $\ftilde(X)$.  As in
the previous two examples, the values of $j$ that
satisfy the hypothesis of the theorem are $j=0$ and
$j=1$.  When $j=0$ we have $\phi_0(1)=p^2(A_1+e_K)-b_1+1$,
$h=h_0=b_1-1$, $t=A_1+e_K$, $S_0=\{0\}$, $A_0=A_1+e_K$,
and $b_0=b_1$. Therefore we get
\begin{alignat*}{2}
\ctilde_{b_1-1}&\equiv\chat_{b_1-1}
+(-1)^{A_1+e_K+1+A_1+e_K}(b_1-1+1)\chat_{b_0}r_1
&&\pmod{\M_K^{A_1+e_K+1}} \\
&\equiv r_0^{b_1-1}\pi_K^{A_1+e_K}-b_1
\pi_K^{A_1}r_0^{b_1}r_1
&&\pmod{\M_K^{A_1+e_K+1}}.
\end{alignat*}
When $j=1$ we have $\phi_1(1)=p^2A_1-b_1+p$, $h=b_1-p$,
$h_0=p^{-1}b_1-1$, $t=A_1$, and $S_1=\{1\}$.  Therefore
we get
\begin{alignat*}{2}
\ctilde_{b_1-p}&\equiv\chat_{b_1-p}
+(-1)^{A_1+1+A_1}(p^{-1}b_1-1+1)\chat_{b_1}r_1^p
&&\pmod{\M_K^{A_1+1}} \\
&\equiv-p^{-1}b_1\pi_K^{A_1}r_0^{b_1}r_1^p
&&\pmod{\M_K^{A_1+1}}. 
\end{alignat*}
Hence if $r_1=0$ then
$v_K(\ctilde_{b_1-1})=A_1+e_K$, while if
$r_1\not=0$ then $v_K(\ctilde_{b_1-p})=A_1$.  In
particular, at least one of
$\ctilde_{b_1-1},\ctilde_{b_1-p}$ is
nonzero.  This contradicts the fact that
$\ftilde(X)$ has the form given in (\ref{f3}).
Hence $L$ is not generated over $K$ by a root of an
Eisenstein polynomial with three terms.

\bibliographystyle{plain}
\bibliography{ThreeEisenstein}

\end{document}